\documentclass[a4paper,fleqn]{cas-sc}

\usepackage[numbers, sort&compress]{natbib}

\def\tsc#1{\csdef{#1}{\textsc{\lowercase{#1}}\xspace}}
\tsc{WGM}
\tsc{QE}
\tsc{EP}
\tsc{PMS}
\tsc{BEC}
\tsc{DE}
\begin{document}
\let\WriteBookmarks\relax
\def\floatpagepagefraction{1}
\def\textpagefraction{.001} 
\shorttitle{Influenza--Pneumococcal Co-infection Model}
\shortauthors{A.D. Fome et~al.}




\title[
  mode=title 
]{%
  When Reducing Transmission Is Not Enough: Carrier Dynamics 
  and Backward Bifurcation in Influenza--Pneumococcal Co-infection
}


%

\author[1]{Anna Daniel Fome}
%

\affiliation[1]{organization={Department of Economics. Mathematics and Statistics , Jordan University College},
	city={Morogoro},
	postcode={P.O Box 1878}, 
	country={Tanzania}}
\ead{anna.fome@juco.ac.tz}

\author[2]{Wolfgang Bock}
 \cormark[1]

\affiliation[inst2]{organization={Mathematics Department},
	addressline={Linnaeus Unversity, Universitetsplatsen 1}, 
	city={Växjö},
	postcode={352 52 },
	country={Sweden}}
\ead{wolfgang.bock@lnu.se}

\author[3]{Thomas Götz}
\ead{goetz@uni-koblenz.de }
\affiliation[3]{organization={Mathematical Institute, University of Koblenz},
                postcode={D-56070 }, 
                country={Germany}}
%

\newtheorem{theorem}{Theorem}
\newtheorem{proposition}[theorem]{Proposition}
\newtheorem{lemma}[theorem]{Lemma}
\newdefinition{remark}{Remark}
\newproof{proof}{Proof}

\begin{abstract}
Bacterial co-infection, particularly with \emph{Streptococcus pneumoniae}, is a major driver of morbidity during influenza epidemics, and this burden falls disproportionately on sub-Saharan Africa, where both pathogens circulate year-round at high levels and pneumococcal carriage remains common and persistent across the region. Yet existing mathematical models rarely track a host's bacterial carriage status jointly with their full influenza history through recovery. We develop a twelve-compartment ODE model that classifies each host by viral status (susceptible, infected, recovered) and bacterial status (susceptible, carrier, actively infected, recovered), allowing carrier-to-active progression to depend on prior influenza exposure. We establish positivity, boundedness, and a positively invariant region, then derive the basic reproduction number $\mathcal{R}_0$ and quasi-endemic equilibria of a single disease, together with invasion reproduction numbers $\mathbf{Inv}^{\mathbf{B}}$ and $\mathbf{Inv}^{\mathbf{V}}$ quantifying each pathogen's ability to invade a population where the other is already endemic. Center-manifold analysis shows that the bacterial subsystem and the full coexistence equilibrium undergo a backward bifurcation precisely when carrier transmission exceeds the carrier's total exit rate, $\beta_c>\kappa_S+\theta+\mu$, so that reducing $\mathcal{R}_0$ below one is not sufficient for elimination, while the viral subsystem bifurcates forward. A global sensitivity analysis (Latin Hypercube Sampling with partial rank correlation coefficients) shows that the viral and bacterial thresholds are governed by largely separate parameters, while the bacterial invasion potential is strongly shaped by viral transmission and recovery, giving a quantitative signature of cross-pathogen synergy.
\end{abstract}

\begin{highlights}
	\item A twelve-compartment model jointly tracks influenza and pneumococcal carriage status through recovery, motivated by the high dual burden of both pathogens in sub-Saharan Africa.
	\item Backward bifurcation occurs iff carrier transmission exceeds the carrier exit rate, $\beta_c>\kappa_S+\theta+\mu$, so $\mathcal{R}_0<1$ alone does not guarantee elimination.
	\item The viral subsystem bifurcates forward while the bacterial subsystem and full coexistence equilibrium bifurcate backward, isolating carriage as the source of bistability.
    \item Global PRCC sensitivity analysis shows viral and bacterial reproduction thresholds are driven by largely separate parameters.
	\item Bacterial invasion potential is strongly shaped by influenza transmission and recovery rates, giving a quantitative signature of cross-pathogen synergy.
	\item The backward-bifurcation condition offers a mechanistic explanation for the persistent vaccine-serotype pneumococcal carriage reported in Kenya, Malawi, and Tanzania after PCV introduction.
	\item Increasing spontaneous carriage clearance shifts the system from backward to forward bifurcation, offering a concrete intervention target for these settings.
\end{highlights}
 
\begin{keywords}
influenza \sep pneumococcal carriage \sep co-infection \sep backward bifurcation \sep invasion reproduction number \sep sensitivity analysis
\end{keywords}

\maketitle

\section{Introduction}
\label{sec:intro}

Sub-Saharan Africa bears a heavy burden of illness and death from two major respiratory germs: the influenza virus and Streptococcus pneumoniae (pneumococcus). Every year, seasonal flu causes between 290,000 and 650,000 deaths worldwide, with Sub-Saharan Africa suffering some of the highest mortality rates \cite{iuliano2018estimates} and reporting much more cases than regions like Latin America \cite{gessner2011seasonal}. Unlike temperate climates with seasonal winter spikes, Tanzanian surveillance data shows that flu circulates throughout the year without a single distinct peak \cite{mmbaga2012results,shedura2023evaluation}.

This constant presence allows the flu to persist, greatly increasing the risk of co-infection with pneumococcus—a combination that heavily affects young children and people living with HIV \cite{lagare2020epidemiology,madhi2000increased,madhi2002lower}. In 2015 alone, pneumococcal disease caused roughly 294,000 deaths among HIV-negative children under five years of age, mainly in low-income areas with limited healthcare and low vaccination rates \cite{wahl2018burden}. The public health burden of these pathogens is evident in the region: influenza and respiratory syncytial virus (RSV) contribute significantly to adult mortality in Soweto \cite{karstaedt2009influenza}, while pneumonia remains a leading cause of adult death in Kenya \cite{scott2000aetiology}. Understanding how flu and pneumococcus interact is not just an academic exercise—it is a critical public health priority that directly affects human lives.

This overlap stems from active interaction between the two pathogens, a phenomenon validated by population-level and physiological data. Drawing on two decades of hospital records, Shrestha et al.\ \cite{shrestha2013identifying} estimated that a previous influenza infection increases the probability of developing pneumococcal pneumonia by nearly one hundredfold, revealing that this relationship is predominantly unidirectional at the population level: influenza significantly encourages pneumococcal disease, but the reverse influence is considerably weaker, although potentially not completely negligible \cite{short2012mutually}. This imbalance has a mechanistic foundation within the host: influenza compromises the respiratory epithelium and reduces the clearance of pneumococcal bacteria \cite{richard2014tlr2,mccullers2006insights,nakamura2011synergistic,siegel2014influenza,aguilera2020inflammation}. Kinetic and immune-timing models indicate that the two pathogens can mutually reinforce each other, the timing of bacterial exposure in relation to influenza being crucial \cite{smith2013kinetics,smith2016critical,shrestha2013timedose} -- although no single mechanism completely accounts for the interaction \cite{mochan2023mathematical}. We discussed the population-level evidence for this interaction and its alignment with our proposed mechanism in Section~\ref{sec:discussion}.  The interaction is most pronounced during the carriage phase—the asymptomatic but contagious period prior to invasive pneumococcal disease \cite{weiser2018streptococcus}. Accordingly, a population-level model representing this dynamic should define carriage as a distinct stage and maintain a post-recovery record of each host's influenza history, necessitating an investigation into how this structure modifies the model's threshold behavior.

Answering that question requires the right analytical tools. Reproduction numbers and equilibrium stability can be examined through various methods—such as Lyapunov function constructions or numerical bifurcation continuation—but in compartmental systems like these, two techniques are used primarily: the next-generation matrix to determine epidemic thresholds \cite{van2002reproduction}, and the center-manifold method to investigate backward bifurcation \cite{castillo2004dynamical}—a situation where merely lowering the reproduction number below one does not suffice to eradicate the disease. We embrace these two here. This has been demonstrated primarily for HIV-tuberculosis co-infection, where the tuberculosis component can generate a backward bifurcation independently of the HIV component \cite{bhunu2009modeling,mekonen2022mathematical,omame2021fractional}. Our previous study of a single-host, two-illness scenario revealed a consistent trend, attributed to asymptomatic transmission \cite{fome2024nordic}. This paper asks whether this pattern persists and what conditions regulate it once carriage is framed epidemiologically.

Carriage is not the only pathway through which two pathogens can influence each other's threshold behavior; therefore, it is important to contextualize this method against others. A pertinent body of literature depicts two co-circulating \emph{viruses} through cross-immunity \cite{stowe2021interactions,nickbakhsh2019virus}, a methodology that we have also applied for influenza and SARS-CoV-2 \cite{fome2025analysis,fome2025interplay,fome2026dynamics}. That framework does not apply directly to this system. Unlike a second acute virus that shares immune overlap, a long-term bacterial pathogen in carriage exhibits entirely different dynamics. Rather than acting as an acute secondary infection competing for the same susceptible population, carriage functions as a persistent, largely asymptomatic reservoir. The model introduced here is explicitly tailored to this scenario, integrating an acute viral infection with a persistent bacterial infection in the carriage. Crucially, the model tracks each host's co-infection history throughout recovery. It allows the rate of bacterial development to vary depending on whether the host is influenza-naive, currently infected, or fully recovered (\(\kappa_S \neq \kappa_I \neq \kappa_R\); Section~\ref{sec:model}). This framework enables us to specify the invasion reproduction numbers $\mathbf{Inv}^{\mathbf{B}}$ and $\mathbf{Inv}^{\mathbf{V}}$ (Section~\ref{subsec:QDFE}), which quantify how one pathogen infiltrates a population where the other is already prevalent.

Existing influenza--pneumococcus models leave gaps that this structure is meant to address. Ali et al.\ \cite{ali2025model} derive a reproduction number of the same form used here and also report a backward bifurcation; based on their published model structure, their carrier compartment lets progression depend on current flu status, but has no separate compartment for carriers who have recovered from flu, so their model cannot represent that distinction, whatever their own derivation attributes the bifurcation to. P\'erez and Oluyori \cite{perez2022model} also track the history of joint infection through recovery, as we do, but classify the bacterial infection by severity rather than stage of carriage, so their model has no compartment for asymptomatic carriage at all. Beyond these structural gaps, the field faces a more general problem: a review of 72 mechanistic virus--bacteria co-infection models found that 79\% rely on interaction parameters that are assumed or borrowed rather than measured from data \cite{dhakal2025mechanistic}, and the present model shares this limitation for its own co-infection-specific parameters (Section~\ref{subsec:ParameterJustification}). We return to the structural comparison in the Discussion (Section~\ref{sec:discussion}) and to the parameter limitation in the Limitations subsection of the Conclusion (Section~\ref{sec:conclusion}).

Against this background, the central question of this paper is: does classifying bacterial status by carriage stage -- rather than severity, or without tracking joint history at all -- lead to qualitatively different epidemic thresholds, invasion conditions, and bifurcation behavior? Does the ability of carriers to transmit create its own route to backward bifurcation, so that reducing transmission alone may not be enough to eliminate disease?

The paper is organized as follows. Section~\ref {sec:model} presents the twelve-compartment model and establishes well-posedness. Section~\ref{sec:DFE} derives the basic reproduction number $\mathcal{R}_0$ and the stability of the disease-free equilibrium. Section~\ref{subsec:QDFE} derives the two quasi-endemic states of a single-disease and the numbers of invasions $\mathbf{Inv}^{\mathbf{B}}$ and $\mathbf{Inv}^{\mathbf{V}}$. Section~\ref{sec:CEE} extends the bifurcation analysis to the equilibrium of coexistence. Section~\ref{sec:Sensitivity} presents a global sensitivity analysis. Section~\ref{sec:discussion} discusses the key findings, implications for public-health, limitations, and future work. Throughout, we study the qualitative behavior of the deterministic model rather than fitting it to the data, so the numerical findings should be considered as illustrative. 

\section{Model Formulation}\label{sec:model}
 
Let $X_{ab}$, where $a \in \{S, I, R\}$ and $b \in \{S, C, I, R\}$, denote the compartment of individuals whose status with respect to \emph{viral infection} (influenza) is $a$, and whose status with respect to \emph{bacterial infection}  (pneumococcal) is $b$. The parameters $\beta$, $\gamma$ and $\iota$ denote, respectively, the transmission rate, the recovery rate, and the loss of immunity rate for the viral ($SIR$) of the model; all three are assumed to be independent of an individual's bacterial status. For bacterial disease, carriers and symptomatically infectious individuals transmit at rates $\beta_c$ and $\beta_I$, respectively. A \emph{carrier} is an individual who harbors the bacterium without yet displaying symptoms but can still infect susceptibles. A carrier progresses to the infectious class at a rate $\kappa_a$, which may depend on the individual's viral status $a$. Infectious bacteria-carriers recover at rate $\gamma_b$ and lose post-bacterial immunity at rate $\iota_b$.

Following standard practice in epidemic modeling \cite{martcheva2015introduction}, demographic processes (births at rate $\Lambda$ and natural deaths at rate $\mu$) are retained, but it is assumed that the disease evolves on a time scale on which demographic changes are negligible; see Lemma \ref{lem:Boundedness} for the exact statement. At any time $t$, the total population is

\begin{equation}
  N(t) \;=\; \sum_{a,\,b} X_{ab}(t),
  \label{eqn:TotalPop}
\end{equation}
which expands explicitly as
\begin{equation}
  N \;=\; X_{SS}+X_{SC}+X_{SI}+X_{SR}
        +X_{IS}+X_{IC}+X_{II}+X_{IR}
        +X_{RS}+X_{RC}+X_{RI}+X_{RR}.
  \label{eqn:Top_Pop}
\end{equation}
The twelve compartments are described in Table \ref{tab:Modelpopulation}, and the time argument $t$ is suppressed for brevity throughout, although its dependence remains implicit.

\begin{table}[!ht]
  \centering
  \caption{Description of the twelve model compartments.}
  \label{tab:Modelpopulation}
  \begin{tabular}{@{}ll@{}}
    \hline
    \textbf{Symbol} & \textbf{Description} \\
    \hline
    $X_{SS}$ & Susceptible to both viral and bacterial infections \\
    $X_{SC}$ & Susceptible to virus; carries bacteria (asymptomatic) \\
    $X_{SI}$ & Susceptible to virus; symptomatically infected with bacteria \\
    $X_{SR}$ & Susceptible to virus; recovered from bacterial infection \\
    $X_{IS}$ & Infected with virus; susceptible to bacteria \\
    $X_{IC}$ & Infected with virus; carries bacteria (asymptomatic) \\
    $X_{II}$ & Co-infected (virus and symptomatic bacterial infection) \\
    $X_{IR}$ & Infected with virus; recovered from bacterial infection \\
    $X_{RS}$ & Recovered from virus; susceptible to bacteria \\
    $X_{RC}$ & Recovered from virus; carries bacteria (asymptomatic) \\
    $X_{RI}$ & Recovered from virus; symptomatically infected with bacteria \\
    $X_{RR}$ & Recovered from both infections \\
    \hline
  \end{tabular}
\end{table}

The schematic flow diagram is shown in Figure \ref{fig:flowchart}.Note that, in general, $X_{ab} \neq X_{ba}$.

\begin{figure}[pos=ht] 
  \centering
  \includegraphics[width=\linewidth]{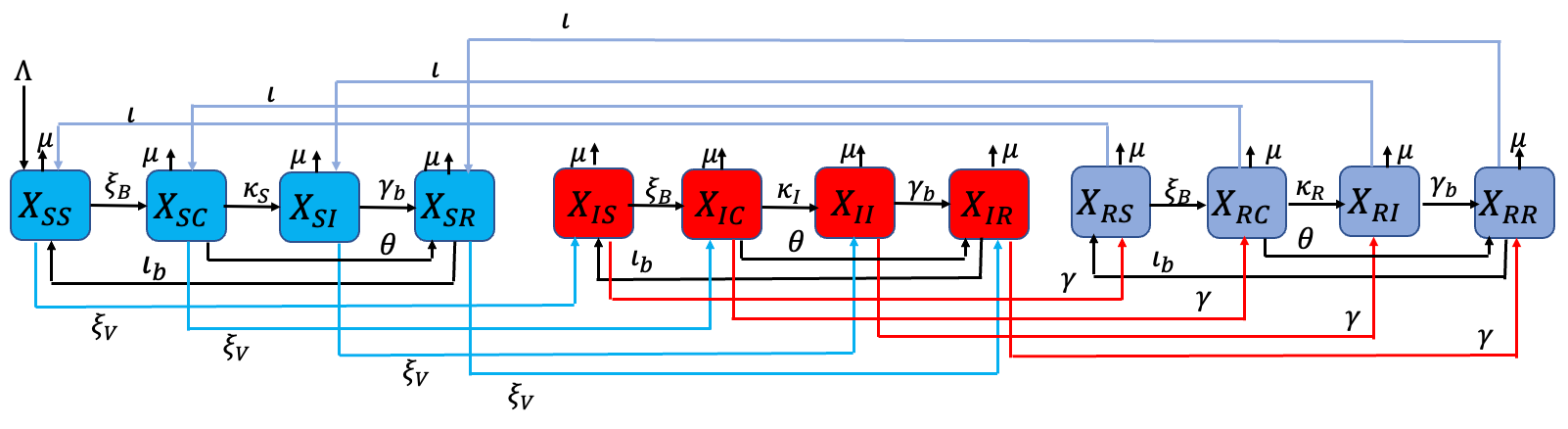}
  \caption{Schematic diagram of the co-infection model.  Arrows indicate
           transitions between compartments; rates are defined in the text.}
  \label{fig:flowchart}
\end{figure} 

\paragraph{Force of infection.} All virally infected individuals $(X_{IS},\,X_{IC},\,X_{II},\,X_{IR})$ transmit the virus per-capita $\beta/N$.  The resulting viral force of infection is

\begin{equation}
  \xi_V \;=\; \frac{\beta}{N}\sum_{b} X_{Ib}
          \;=\; \frac{\beta}{N}\bigl(X_{IS}+X_{IC}+X_{II}+X_{IR}\bigr).
  \label{eqn:viral_force_of_infection}
\end{equation}
Both bacterial carriers and symptomatically infectious individuals contribute to bacterial transmission, independently of their viral status, giving the bacterial force of infection
\begin{equation}
  \xi_B \;=\; \frac{\beta_c}{N}\sum_{a} X_{aC}
            + \frac{\beta_I}{N}\sum_{a} X_{aI}
          \;=\; \frac{\beta_c}{N}\bigl(X_{SC}+X_{IC}+X_{RC}\bigr)
              + \frac{\beta_I}{N}\bigl(X_{SI}+X_{II}+X_{RI}\bigr).
  \label{eqn:bacterial_force_of_infection}
\end{equation}

\noindent Combining the transition rates described above yields the following 12-dimensional ODE system:
\begin{equation}
  \begin{aligned}
    X_{SS}' &= \Lambda - (\xi_V+\xi_B)\,X_{SS}
               + \iota\, X_{RS} + \iota_b\, X_{SR} - \mu\, X_{SS}, \\
    X_{SC}' &= \xi_B\, X_{SS} - \xi_V\, X_{SC}
               + \iota\, X_{RC} - j_1\, X_{SC}, \\
    X_{SI}' &= \kappa_S\, X_{SC} - \xi_V\, X_{SI}
               + \iota\, X_{RI} - j_2\, X_{SI}, \\
    X_{SR}' &= \gamma_b\, X_{SI} + \theta\, X_{SC}
               - \xi_V\, X_{SR} + \iota\, X_{RR} - j_3\, X_{SR}, \\[4pt]
    X_{IS}' &= \xi_V\, X_{SS} - \xi_B\, X_{IS}
               + \iota_b\, X_{IR} - j_4\, X_{IS}, \\
    X_{IC}' &= \xi_V\, X_{SC} + \xi_B\, X_{IS} - j_5\, X_{IC}, \\
    X_{II}' &= \xi_V\, X_{SI} + \kappa_I\, X_{IC} - j_6\, X_{II}, \\
    X_{IR}' &= \xi_V\, X_{SR} + \gamma_b\, X_{II}
               + \theta\, X_{IC} - j_7\, X_{IR}, \\[4pt]
    X_{RS}' &= \gamma\, X_{IS} - \xi_B\, X_{RS}
               + \iota_b\, X_{RR} - j_8\, X_{RS}, \\
    X_{RC}' &= \xi_B\, X_{RS} + \gamma\, X_{IC} - j_9\, X_{RC}, \\
    X_{RI}' &= \kappa_R\, X_{RC} + \gamma\, X_{II} - j_{10}\, X_{RI}, \\
    X_{RR}' &= \gamma_b\, X_{RI} + \gamma\, X_{IR}
               + \theta\, X_{RC} - j_{11}\, X_{RR},
  \end{aligned}
  \label{eqn:MainModel}
\end{equation}
where the compound rates are defined by
\begin{equation}
  \begin{aligned}
    j_1  &= \kappa_S+\theta+\mu, &
    j_2  &= \gamma_b+\mu, &
    j_3  &= \iota_b+\mu, \\
    j_4  &= \gamma+\mu, &
    j_5  &= \kappa_I+\gamma+\theta+\mu, &
    j_6  &= \gamma+\gamma_b+\mu, \\
    j_7  &= \gamma+\iota_b+\mu, &
    j_8  &= \iota+\mu, &
    j_9  &= \kappa_R+\theta+\iota+\mu, \\
    j_{10} &= \gamma_b+\iota+\mu, &
    j_{11} &= \iota+\iota_b+\mu. &
  \end{aligned}
  \label{eqn:j_definitions}
\end{equation}
The System~\eqref{eqn:MainModel} is supplemented with non-negative initial conditions
\begin{equation}
  X_{ab}(0) \geq 0 \quad
  \text{for all } a \in \{S,I,R\},\; b \in \{S,C,I,R\}.
  \label{eqn:Initial_data}
\end{equation}


\subsection{Parameter values and biological justification}
\label{subsec:ParameterJustification} 

A recent analysis of 72 virus-bacteria co-infection models found that 79\% of interaction parameters were assumed or taken from other sources rather than being directly measured \cite{dhakal2025mechanistic}. To ensure clarity, we categorize the twelve parameters of System~\eqref{eqn:MainModel} into three levels, as indicated in the final column of Table~\ref{tab:parameter_justification}, based on the extent to which data directly supports each one. A Tier A parameter is directly derived from data; solely the carriage clearance rate $\theta$ qualifies. A Tier B parameter lacks a direct estimate; instead, its value is derived from transforming a measured quantity into the necessary rate or is otherwise restricted to a reasonable range by data -- this is true for $\beta$, $\gamma$, $\beta_c$, $\iota$, and $\mu$. A Tier C parameter lacks any numerical estimate in the literature: only the direction of its influence is backed by evidence, making its value a modeling assumption instead of a measurement -- this is relevant to $\gamma_b$, $\kappa_S$, $\beta_I$, $\kappa_I$, $\kappa_R$, and $\iota_b$. In all three categories,  the values are maintained within biologically realistic ranges from existing literature, adjusted to a standardized daily time unit, and are not tailored to any particular outbreak. Because six of the twelve parameters belong to Tier C, we consider them a source of uncertainty and analyze them directly in the sensitivity analysis (Section~\ref{sec:Sensitivity})

\renewcommand{\arraystretch}{1.35}
\footnotesize

\begin{longtable}{@{}
  >{\RaggedRight\arraybackslash}p{0.055\linewidth}
  >{\RaggedRight\arraybackslash}p{0.16\linewidth}
  >{\RaggedRight\arraybackslash}p{0.4\linewidth}
  c c
  >{\RaggedRight\arraybackslash}p{0.28\linewidth}
@{}}

\caption{Empirical evidence, illustrative baseline values, and evidence tier for the influenza--pneumococcal co-infection model. All rates are
per day.}
\label{tab:parameter_justification} \\

\toprule
\textbf{Sym.} & \textbf{Description} & \textbf{Empirical basis}
& \textbf{Base} & \textbf{Tier} & \textbf{Source} \\
\midrule
\endfirsthead

\multicolumn{6}{c}{{\bfseries \tablename~\thetable{} -- Continued}} \\[0.4em]
\toprule
\textbf{Sym.} & \textbf{Description} & \textbf{Empirical basis}
& \textbf{Base} & \textbf{Tier} & \textbf{Source} \\
\midrule
\endhead

\midrule
\multicolumn{6}{r}{{Continued on next page\ldots}} \\
\endfoot

\bottomrule
\endlastfoot

$\beta$ & Influenza transmission rate
& Tanzanian sentinel surveillance shows sustained, seasonally
variable influenza circulation; a Kenyan dynamic transmission model
provides estimates of transmissibility.
& $0.50$ & B
& \cite{mmbaga2012results,shedura2023evaluation,dawa2020seasonal}
\\[0.5em]

$\gamma$ & Influenza recovery rate
& Infectious period of approximately 2--7 days
($\gamma = 1/D_I \approx 0.14$--$0.50$/day).
& $0.33$ & B
& \cite{biggerstaff2014estimates,nishiura2010pros}
\\[0.5em]

$\beta_c$ & Pneumococcal carrier transmission rate
& African longitudinal cohorts estimate carriage acquisition and
transmission; dynamic models calibrate transmission parameters
against observed carriage and disease.
& $0.015$ & B
& \cite{abdullahi2012rates,tigoi2012rates,melegaro2010dynamic}
\\[0.5em]

$\gamma_b$ & Active pneumococcal disease recovery rate
& No directly transferable estimate; carriage studies quantify
acquisition and clearance, not recovery from active disease.
& $0.10$ & C
& \cite{abdullahi2012rates,chaguza2021carriage}
\\[0.5em]

$\theta$ & Pneumococcal carriage clearance rate
& Mean carriage duration $\approx 31$ days
($\theta = 1/D_C \approx 0.032$/day).
& $0.032$ & A
& \cite{abdullahi2012rates,chaguza2021carriage}
\\[0.5em]

$\kappa_S$ & Carriage $\rightarrow$ disease (no influenza)
& Carriage is an established disease precursor, but carriage
prevalence alone does not identify the daily progression hazard.
& $0.005$ & C
& \cite{bogaert2004streptococcus,weiser2018streptococcus,ojal2017sustained}
\\[0.5em]

$\beta_I$ & Pneumococcal transmission during influenza
& Interaction between influenza and pneumococcal colonization is
well supported; no directly transferable population-level
transmission rate exists.
& $0.035$ & C
& \cite{smith2013kinetics,richard2014tlr2,shrestha2013timedose}
\\[0.5em]

$\kappa_I$ & Carriage $\rightarrow$ disease (during influenza)
& Susceptibility to pneumococcal pneumonia rises sharply and
transiently following influenza infection.
& $0.05$ & C
& \cite{shrestha2013timedose,shrestha2015role,mccullers2006insights}
\\[0.5em]

$\kappa_R$ & Carriage $\rightarrow$ disease (post-influenza)
& The influenza-associated rise in pneumococcal disease risk is
transient rather than permanent.
& $0.02$ & C
& \cite{shrestha2013timedose,shrestha2015role}
\\[0.5em]

$\iota$ & Loss of influenza immunity
& For a single-season model, immunity loss can reasonably be
suppressed.
& $0.00$ & B
& \cite{mmbaga2012results,shedura2023evaluation}
\\[0.5em]

$\iota_b$ & Loss of pneumococcal protection
& Repeated acquisition and serotype-dependent carriage support
temporary, heterogeneous protection, but not a unique loss rate.
& $0.005$ & C
& \cite{abdullahi2012rates,chaguza2021carriage,emgaard2019carriage,emgaard2024co}
\\[0.5em]

$\mu$ & Natural birth/death rate
& Demographic turnover occurs on a much longer time scale than
acute infection.
& $0.0001$ & B
& \cite{martcheva2015introduction}
\\

\end{longtable}

\normalsize
\renewcommand{\arraystretch}{1}

\subsection{Well-posedness: positivity, boundedness, and invariant region}
\label{subsec:WellPosedness}

Before analyzing the equilibria of System~\eqref{eqn:MainModel}, we verify that the model is biologically well posed.  We establish three
properties:
\begin{enumerate}[\roman*]
  \item \textbf{Positivity;}  Solutions with non-negative initial data
        remain non-negative for all $t>0$.
  \item \textbf{Boundedness;}  The total population $N(t)$ is bounded
        above for all $t\geq 0$.
  \item \textbf{Invariant region;}  There exists a compact positively invariant region $\Omega\subset\mathbb{R}^{12}_{+}$ under the flow of \eqref{eqn:MainModel}.
\end{enumerate}
Together, these guaranty that compartment sizes are never negative and that the total population does not grow without bound, so System~\eqref{eqn:MainModel} is epidemiologically meaningful.

\begin{lemma}[Positivity of solutions]\label{lem:Positivity}
Let the initial data satisfy Equation~\eqref{eqn:Initial_data}.  Then every
solution $(X_{SS},X_{SC},\ldots,X_{RR})$ of
System~\eqref{eqn:MainModel} remains non-negative for all $t>0$:
\begin{equation}
  X_{ab}(0)\geq 0
  \;\Longrightarrow\;
  X_{ab}(t)\geq 0
  \quad\text{for all }t>0.
  \label{eqn:WP_positivity_statement}
\end{equation}
\end{lemma}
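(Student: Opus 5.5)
The plan is to use the quasi-positivity (essentially non-negative vector field) criterion, with a short contradiction argument about a first exit time. First fix the setting. For non-negative initial data with $N(0)>0$, the right-hand side of System~\eqref{eqn:MainModel} is locally Lipschitz on a neighbourhood of $\mathbb{R}^{12}_{+}\setminus\{0\}$. The only non-polynomial terms are the forces of infection $\xi_V,\xi_B$, which contain $1/N$. Hence a unique local solution exists on some interval $[0,T)$.

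On this interval $N$ stays positive, because summing the equations gives
\[
N' \;=\; \Lambda-\mu N - \bigl(\text{disease-induced losses, here none}\bigr) \;\ge\; \Lambda-\mu N .
\]
So $N(t)\ge \min\{N(0),\Lambda/\mu\}>0$. Once positivity is established, the same bound also gives a priori boundedness, and this extends the solution to all $t>0$. Consequently $\xi_V(t)$ and $\xi_B(t)$ are continuous functions of $t$ along the solution.

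The key structural observation is checked equation by equation. Each equation has the form
\[
X_{ab}' \;=\; P_{ab}(X) \;-\; q_{ab}(t)\,X_{ab}.
\]
Here $P_{ab}\ge 0$ whenever all components are non-negative, since it is a sum of products of non-negative parameters, $\Lambda$, the forces $\xi_V,\xi_B$, and other compartments. The coefficient $q_{ab}(t)$ is continuous: for example, $q_{SS}=\xi_V+\xi_B+\mu$, $q_{SC}=\xi_V+j_1$, $q_{IC}=j_5$, and so on. In other words, every negative term in $X_{ab}'$ carries the factor $X_{ab}$ itself. Thus, at any point of the boundary of $\mathbb{R}^{12}_{+}$ where $X_{ab}=0$ and all other components are non-negative, we have $X_{ab}'\ge 0$.

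With this in hand, I would argue by contradiction.

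1. To avoid the degenerate case $X_{ab}'=0$ at the boundary, first perturb the initial data to be strictly positive (or add $\varepsilon>0$ to $\Lambda$ and to each equation).
2. Let $t^*$ be the first time some component vanishes, so that all components are positive on $[0,t^*)$ and $X_{ab}(t^*)=0$.
3. On $[0,t^*]$ the variation-of-constants formula gives
\[
X_{ab}(t^*) \;=\; e^{-\int_0^{t^*} q_{ab}}\Bigl(X_{ab}(0)+\int_0^{t^*} e^{\int_0^{s} q_{ab}}\,P_{ab}(X(s))\,ds\Bigr) \;>\; 0,
\]
which is a contradiction.
4. Let $\varepsilon\to0$ and use continuous dependence on initial data and parameters to obtain non-negativity for the original data.

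The main obstacle is bookkeeping rather than ideas. It consists of verifying for all twelve equations that no cross term enters with a negative sign; the only negative contributions are $\xi_V X_{Sb}$, $\xi_B X_{aS}$ and $j_k X_{ab}$, each proportional to the compartment's own variable. A second, minor point is handling the $1/N$ singularity by keeping $N$ bounded away from zero, as above.
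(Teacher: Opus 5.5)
Your proposal is correct and rests on the same key observation as the paper's proof in Appendix~\ref{app:Positivity}: on the face $X_{ab}=0$ of the orthant, every remaining term of $X_{ab}'$ is a non-negative inflow, so $X_{ab}'\ge 0$. The paper then cites the quasi-positivity invariance principle for $\mathbb{R}^{12}_{+}$ (which it labels a result for cooperative systems, although cooperativity is not needed and does not hold here), whereas you prove that principle directly via the integrating-factor formula, a first-vanishing-time contradiction and an $\varepsilon$-perturbation, and you also make explicit the bound $N(t)\ge\min\{N(0),\Lambda/\mu\}>0$ and the global existence that the paper's positivity argument takes for granted.
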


\begin{proof}
The proof is given in Appendix \ref{app:Positivity}.  In each case, it can be verified that, whenever $X_{ab}=0$ with all other non-negative components, the derivative $X_{ab}'\geq 0$. Hence, the non-negative orthant $\mathbb{R}^{12}_{+}$ is positively invariant.
\end{proof}

\begin{lemma}[Population dynamics and boundedness]\label{lem:Boundedness}
Let $N(t)=\sum_{a,b}X_{ab}(t)$.  Then
\begin{equation}
  N'(t) = \Lambda - \mu\,N(t),
  \label{eqn:WP_Nprime}
\end{equation}
with a unique solution
\begin{equation}
  N(t) = \frac{\Lambda}{\mu}
         +\!\left(N(0)-\frac{\Lambda}{\mu}\right)e^{-\mu t}.
  \label{eqn:WP_Nsolution}
\end{equation}
In particular:
\begin{enumerate}[\roman*]
  \item $N(t)$ is bounded: $0<N(t)\leq\max\!\left\{N(0),\,\Lambda/\mu\right\}$
        for all $t\geq 0$.
  \item $N(t)\to\Lambda/\mu$ as $t\to\infty$, independently of initial
        conditions.
  \item If $N(0)=\Lambda/\mu$, then $N(t)=\Lambda/\mu$ for all $t\geq 0$.
\end{enumerate}
\end{lemma}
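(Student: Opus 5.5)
The plan is to sum the twelve equations of System~\eqref{eqn:MainModel} and check that every transition term cancels, so that only births and natural deaths survive. I would group the terms by type and verify each group cancels.

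\emph{Infection terms.} Each $\xi_V X_{Sb}$ leaves row $S$ and enters row $I$. Each $\xi_B X_{aS}$ leaves column $S$ and enters column $C$.

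\emph{Disease-progression terms.} The rates $\kappa_a$ (from $C$ to $I$), $\theta$ (from $C$ to $R$), $\gamma_b$ (from $I$ to $R$ in the bacterial index) and $\gamma$ (from $I$ to $R$ in the viral index) each appear once as an outflow and once as an inflow.

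\emph{Waning terms.} The rates $\iota$ (viral $R\to S$) and $\iota_b$ (bacterial $R\to S$) likewise appear once with each sign.

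To make the bookkeeping concrete, I would expand each compound rate $j_k$ of \eqref{eqn:j_definitions} into its constituent outflows. For example, $j_5 X_{IC}$ splits into $\kappa_I X_{IC}$ (to $X_{II}$), $\gamma X_{IC}$ (to $X_{RC}$), $\theta X_{IC}$ (to $X_{IR}$) and $\mu X_{IC}$. Each non-$\mu$ piece is then matched with its inflow term in the receiving equation. This matching is the only real work and the main, though routine, obstacle. One must check in particular that the $\iota$ and $\iota_b$ terms appear in the $j$'s exactly where the corresponding inflows appear, e.g.\ $\iota X_{RC}$ is in $X_{SC}'$ and $\iota$ is in $j_9$. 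Every $j_k$ contains exactly one $\mu$. The equation for $X_{SS}$ carries $-\mu X_{SS}$ explicitly and is the only one with $\Lambda$. Summing therefore yields $N'=\Lambda-\mu N$, which is \eqref{eqn:WP_Nprime}.

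This is a linear first-order ODE with constant coefficients. Multiplying by the integrating factor $e^{\mu t}$ gives $(N e^{\mu t})'=\Lambda e^{\mu t}$, and integrating from $0$ to $t$ gives \eqref{eqn:WP_Nsolution}. Uniqueness follows from the Picard--Lindel\"of theorem, since the right-hand side is globally Lipschitz.

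For the three consequences, write $N(t)=\tfrac{\Lambda}{\mu}(1-e^{-\mu t})+N(0)e^{-\mu t}$. This is a convex combination of $N(0)$ and $\Lambda/\mu$, which gives (i): $N(t)\le\max\{N(0),\Lambda/\mu\}$. For $t>0$ it is strictly positive because $\Lambda>0$, and positivity is also consistent with Lemma~\ref{lem:Positivity}. At $t=0$ the strict inequality holds provided $N(0)>0$, which I would assume as nontrivial initial data. Letting $t\to\infty$ kills the exponential term, which gives (ii). Setting $N(0)=\Lambda/\mu$ makes the exponential coefficient vanish, which gives (iii).
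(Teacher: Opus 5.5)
Your proposal is correct and follows the paper's own proof: sum the twelve equations, observe that every infection, progression and waning flow cancels in pairs so that only $\Lambda$ and $-\mu N$ survive, then solve the resulting linear ODE. Your flow-by-flow bookkeeping through the compound rates $j_k$, and the convex-combination argument for (i), are somewhat more careful than the paper, which argues only the upper bound and never addresses the strict lower bound. In particular, you correctly note that the lower bound at $t=0$ needs $N(0)>0$.
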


\begin{proof}
Summing all twelve equations of the System~\eqref{eqn:MainModel}, we compute $N'(t)$ term by term.  The birth term $\Lambda$ appears only in equation $X_{SS}$.  The death term $-\mu X_{ab}$ appears in every equation, contributing $-\mu\sum_{a,b}X_{ab}=-\mu N$.  All remaining
terms - involving $\xi_V$, $\xi_B$, $\gamma$, $\gamma_b$, $\kappa_S$, $\kappa_I$, $\kappa_R$, $\theta$, $\iota$, and $\iota_b$ — represent
flows between compartments and therefore cancel in pairs when summed over all compartments.  We verify the three main groups explicitly.

\medskip
\noindent\textit{Viral force-of-infection terms.}
Each outflow $-X_{ab}\,\xi_V$ from a non-infected compartment is matched by an equal inflow $+X_{ab}\,\xi_V$ to the corresponding viral-infected compartment; these contributions sum up to zero.

\noindent\textit{Bacterial force-of-infection terms.}
Similarly, each outflow $-X_{ab}\,\xi_B$ from a non-carrier compartment is matched by an equal inflow to the corresponding carrier compartment;
their sum is zero.

\noindent\textit{Recovery, progression, and immunity-loss terms.}
Each of $\gamma$, $\gamma_b$, $\kappa_S$, $\kappa_I$, $\kappa_R$, $\theta$, $\iota$, $\iota_b$ moves individuals from one compartment to
another, contributing a net zero to $N'$.

\medskip\noindent
Therefore, $N'(t)=\Lambda-\mu N(t)$, which is the linear ODE~\eqref{eqn:WP_Nprime}.  Its unique solution is~\eqref{eqn:WP_Nsolution}.  Since $e^{-\mu t}\in(0,1]$ for $t\geq 0$ and $\mu>0$, the solution is bounded above by $\max\{N(0),\,\Lambda/\mu\}$ and converges to $\Lambda/\mu>0$ as $t\to\infty$.
\end{proof}

\begin{theorem}[Positively invariant feasible region]\label{thm:InvariantRegion}
Define
\begin{equation}
  \Omega \;=\;
  \left\{
    (X_{SS},X_{SC},\ldots,X_{RR})\in\mathbb{R}^{12}_{+}
    \;\bigg|\;
    \sum_{a,b}X_{ab} \leq \frac{\Lambda}{\mu}
  \right\}.
  \label{eqn:WP_Omega}
\end{equation}
The region $\Omega$ is:
\begin{enumerate}[\roman*]
  \item compact (closed and bounded) in $\mathbb{R}^{12}$;
  \item positively invariant under the flow of System ~\eqref{eqn:MainModel};
  \item attracting: every solution starting in
        $\mathbb{R}^{12}_{+}\setminus\Omega$ enters $\Omega$ in finite
        time and remains there.
\end{enumerate}
Consequently, it is sufficient to study the dynamics
of System~\eqref{eqn:MainModel} restricted to $\Omega$.
\end{theorem}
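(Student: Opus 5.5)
The plan is to obtain all three properties from Lemma~\ref{lem:Positivity} and Lemma~\ref{lem:Boundedness}, without using the fine structure of System~\eqref{eqn:MainModel}.

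For (i), write $\Omega=\mathbb{R}^{12}_{+}\cap\{x:\mathbf{1}^\top x\le\Lambda/\mu\}$. This is an intersection of finitely many closed half-spaces, namely $X_{ab}\ge 0$ and $\sum X_{ab}\le\Lambda/\mu$, so $\Omega$ is closed. Every point satisfies $0\le X_{ab}\le\sum X_{ab}\le\Lambda/\mu$, so $\Omega$ lies in the cube $[0,\Lambda/\mu]^{12}$ and is bounded. Heine--Borel then gives compactness.

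For (ii), take initial data in $\Omega$. The right-hand side of \eqref{eqn:MainModel} is locally Lipschitz on $\{N>0\}$, and the total population stays positive by Lemma~\ref{lem:Boundedness}. Together with the a priori bound below, this gives global existence of the solution. Lemma~\ref{lem:Positivity} keeps the solution in $\mathbb{R}^{12}_{+}$. By \eqref{eqn:WP_Nsolution}, $N(0)\le\Lambda/\mu$ implies $N(t)-\Lambda/\mu=(N(0)-\Lambda/\mu)e^{-\mu t}\le 0$ for all $t\ge0$. Hence both defining inequalities of $\Omega$ persist, and $\Omega$ is positively invariant.

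For (iii), I expect the main obstacle: the claim ``enters $\Omega$ in finite time'' cannot be taken literally. If $N(0)>\Lambda/\mu$, then \eqref{eqn:WP_Nsolution} gives $N(t)>\Lambda/\mu$ for every finite $t$, because $e^{-\mu t}>0$. The trajectory therefore approaches $\Omega$ only asymptotically. I would prove the correct version in two parts.

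First, $\operatorname{dist}(X(t),\Omega)\to0$. The point $X(t)\cdot\frac{\Lambda/\mu}{N(t)}$ lies in $\Omega$, and its distance to $X(t)$ is at most $|N(t)-\Lambda/\mu|$, which tends to zero.

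Second, for every $\varepsilon>0$, the enlarged region $\Omega_\varepsilon=\{x\in\mathbb{R}^{12}_+:\sum x\le\Lambda/\mu+\varepsilon\}$ is entered at the explicit finite time
\[
t_\varepsilon=\frac{1}{\mu}\ln\frac{N(0)-\Lambda/\mu}{\varepsilon}
\]
and is never left afterwards. The ``never left'' part follows by the same monotonicity argument as in (ii).

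I would state (iii) in this form, or equivalently say that $\Omega$ attracts all of $\mathbb{R}^{12}_+$ and that every $\omega$-limit set lies in $\Omega$. The final ``consequently'' then follows. Every $\omega$-limit set is contained in $\Omega$, and $\Omega$ is invariant, so the long-term dynamics are determined by the flow restricted to $\Omega$.
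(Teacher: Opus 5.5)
Your parts (i) and (ii) are the paper's argument: $\Omega$ is the closed orthant intersected with a closed half-space, $0\le X_{ab}\le\Lambda/\mu$ gives boundedness, and positivity together with \eqref{eqn:WP_Nsolution} gives invariance. Your remark about local Lipschitz continuity and global existence makes explicit what the paper only covers by citing standard ODE theory.

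On (iii) you depart from the paper, and you are right to. The paper argues that, since $e^{-\mu t}$ is strictly decreasing, there is a finite $T^{*}$ with $N(t)\le\Lambda/\mu$ for all $t\ge T^{*}$. This contradicts its own formula: $N(t)-\Lambda/\mu=(N(0)-\Lambda/\mu)e^{-\mu t}>0$ for every finite $t$ whenever $N(0)>\Lambda/\mu$. So the finite-time-entry claim in (iii) is false as stated, and the paper's step proving it is invalid.

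Your replacement is the correct statement, and it is all the closing ``consequently'' requires, because it places every $\omega$-limit set in $\Omega$. It has two parts:
\begin{itemize}
\item $\operatorname{dist}(X(t),\Omega)\le N(t)-\Lambda/\mu\to0$, obtained via the radial projection $X(t)\,\Lambda/(\mu N(t))$.
\item Finite-time entry into every enlargement $\Omega_\varepsilon$, which is never left afterwards.
\end{itemize}

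Two minor fixes:
\begin{itemize}
\item Take $t_\varepsilon=\max\{0,\tfrac1\mu\ln\tfrac{N(0)-\Lambda/\mu}{\varepsilon}\}$, so the formula also covers the case $\varepsilon\ge N(0)-\Lambda/\mu$.
\item At the origin, which lies in $\Omega$, the standard-incidence terms are $0/0$. Each term $\beta X_{ab}X_{cd}/N$ has bounded gradient on $\mathbb{R}^{12}_+\setminus\{0\}$, so it extends by zero to a Lipschitz function on $\mathbb{R}^{12}_+$. Your well-posedness remark then holds on all of $\Omega$, not just on $\{N>0\}$.
\end{itemize}
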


\begin{proof}
\textbf{(i) Compactness.}
The non-negative orthant $\mathbb{R}^{12}_{+}$ is closed, and the constraint $N\leq\Lambda/\mu$ defines a closed half-space.  Their intersection $\Omega$ is therefore closed.  Since every component satisfies $0\leq X_{ab}\leq\Lambda/\mu$, the region is bounded and therefore compact.

\medskip
\textbf{(ii) Positive invariance.}
Let $(X_{SS}(0),\ldots,X_{RR}(0))\in\Omega$, so that $N(0)\leq\Lambda/\mu$.  By Lemma \ref{lem:Positivity}, $X_{ab}(t)\geq 0$
for all $t>0$.  By Lemma \ref{lem:Boundedness},
\[
  N(t) = \frac{\Lambda}{\mu}
         +\underbrace{\left(N(0)-\frac{\Lambda}{\mu}\right)}_{\leq\,0}
         e^{-\mu t}
         \;\leq\; \frac{\Lambda}{\mu}
         \quad\text{for all }t\geq 0.
\]
Hence $(X_{SS}(t),\ldots,X_{RR}(t))\in\Omega$ for all $t\geq 0$.

\medskip
\textbf{(iii) Attractivity.}
If $N(0)>\Lambda/\mu$, then by~\eqref{eqn:WP_Nsolution},
\[
  N(t)-\frac{\Lambda}{\mu}
  = \left(N(0)-\frac{\Lambda}{\mu}\right)e^{-\mu t}\to 0^{+}
  \quad\text{as }t\to\infty.
\]
Since $e^{-\mu t}$ is strictly decreasing, there exists a finite time $T^{*}>0$ such that $N(t)\leq\Lambda/\mu$ for all $t\geq T^{*}$.
Combined with $X_{ab}(t)\geq 0$ (Lemma \ref{lem:Positivity}), the solution enters $\Omega$ by time $T^{*}$ and remains there.

\medskip\noindent
Properties (i)--(iii) together show that $\Omega$ is compact and positively invariant and that the global attractor of
System~\eqref{eqn:MainModel} lies within $\Omega$. The existence, uniqueness, and continuous dependence on initial data for the Lipschitz
right-hand side of~\eqref{eqn:MainModel} on $\Omega$ then follow from the standard ODE theory \cite{perko2013differential}.
\end{proof}

\begin{remark}[Population normalization~{\cite{martcheva2015introduction}}]
\label{rem:NormalizedPop}
When $\Lambda=\mu$, Lemma \ref{lem:Boundedness} shows that $N(t)\to 1$ as $t\to\infty$; hereafter we treat $N=1$ (normalized population).
Following \cite{martcheva2015introduction}, the use of standard incidence allows the endemic steady-state equations to be written in terms of
\emph{proportions}.  We therefore introduce the lowercase notation
\[
  x_{ab} \;=\; \frac{X_{ab}}{N}, \qquad
  a\in\{s,i,r\},\; b\in\{s,c,i,r\},
\]
so that $\sum_{a,b}x_{ab}=1$.  Theorem \ref{thm:InvariantRegion} guarantees that $N(t)>0$ for all $t\geq 0$ (since $N(t)\geq(\Lambda/\mu)(1-e^{-\mu t})>0$ whenever $\Lambda,\mu>0$), ensuring the normalization is well-defined at every finite time.
\end{remark}

\section{Disease-Free Equilibrium and Local Stability}
\label{sec:DFE}

\paragraph{Disease-free equilibrium.}
In the normalized variables introduced in Remark \ref{rem:NormalizedPop}, if at time $t=0$ the entire population is susceptible, then System~\eqref{eqn:MainModel} admits the \emph{disease-free equilibrium} (DFE)
\begin{equation}
  x_{ab}^{0}
  = \bigl[x_{ss}^{0},\,x_{sc}^{0},\,x_{si}^{0},\,x_{sr}^{0},\,
           x_{is}^{0},\,x_{ic}^{0},\,x_{ii}^{0},\,x_{ir}^{0},\,
           x_{rs}^{0},\,x_{rc}^{0},\,x_{ri}^{0},\,x_{rr}^{0}\bigr]
  = \bigl[1,0,0,0,0,0,0,0,0,0,0,0\bigr].
  \label{eqn:DFE}
\end{equation}

\paragraph{Basic reproduction number.}
The basic reproduction number $\mathcal{R}_0$ quantifies the average number of secondary infections produced by a single typical infected individual introduced into a fully susceptible population over the course of its infectious period \cite{diekmann1990definition}. We compute $\mathcal{R}_0$ using the next-generation matrix method \cite{van2002reproduction}. The infected compartments are $x_{sc},\,x_{si},\,x_{is},\,x_{ic},\,x_{ii},\,x_{ir},\,x_{rc},\,x_{ri}$. After decomposing the right-hand side of~\eqref{eqn:MainModel} into new-infection terms $\mathcal{F}$ and transition terms $\mathcal{V}$, and linearizing  about~\eqref{eqn:DFE}, we obtain the matrices $F$ and $V$
(see Appendix \ref{app:RepNum} for explicit expressions). Computing $FV^{-1}$ yields the next-generation matrix
\begin{equation}
  FV^{-1}=
  \begin{pmatrix}
    \mathcal{R}_{0b}^{T1} & \mathcal{R}_{0b}^{I1} & 0 &
    \mathcal{R}_{0b}^{I2}+\mathcal{R}_{0b}^{C2} &
    \mathcal{R}_{0b}^{I3} & 0 &
    \mathcal{R}_{0b}^{I4}+\mathcal{R}_{0b}^{C1} &
    \mathcal{R}_{0b}^{I5} \\
    0 & 0 & 0 & 0 & 0 & 0 & 0 & 0 \\
    0 & 0 & \mathcal{R}_{0a}^{V1} & \mathcal{R}_{0a}^{V2} &
    \mathcal{R}_{0a}^{V3} & \mathcal{R}_{0a}^{V4} & 0 & 0 \\
    0 & 0 & 0 & 0 & 0 & 0 & 0 & 0 \\
    0 & 0 & 0 & 0 & 0 & 0 & 0 & 0 \\
    0 & 0 & 0 & 0 & 0 & 0 & 0 & 0 \\
    0 & 0 & 0 & 0 & 0 & 0 & 0 & 0 \\
    0 & 0 & 0 & 0 & 0 & 0 & 0 & 0
  \end{pmatrix},
  \label{eqn:FV_Inverse}
\end{equation}
where the complete definitions of all entries are given in Appendix \ref{app:RepNum}. The two leading quantities are
\begin{equation}
  \mathcal{R}_{0a}^{V1} = \frac{\beta}{j_4},
  \qquad 
  \mathcal{R}_{0b}^{T1} = \frac{\beta_c j_2 + \beta_I \kappa_S}{j_1 j_2}.
  \label{eqn:R0_main}
\end{equation}
Row 1 of $FV^{-1}$ shows the expected secondary bacterial cases produced by a single carrier or infectious individual; Row~3 gives the corresponding quantity for a viral-infected individual. All other rows are zero, reflecting the fact that only these two types of infectious infections are seeded in the DFE. The basic reproduction number of System~\eqref{eqn:MainModel} is the spectral radius of~\eqref{eqn:FV_Inverse}:
\begin{equation}
  \mathcal{R}_0
  \;=\; \rho\!\left(FV^{-1}\right)
  \;=\; \max\!\left\{\mathcal{R}_{0a}^{V1},\;\mathcal{R}_{0b}^{T1}\right\}.
  \label{eqn:ReprNum}
\end{equation}
We now establish the local stability of the DFE.

\begin{lemma}[Local asymptotic stability of the DFE]
\label{lem:DFE_LAS}
If $\mathcal{R}_{0a}^{V1}<1$ and $\mathcal{R}_{0b}^{T1}<1$, then the disease-free equilibrium $x_{ab}^{0}$ defined by~\eqref{eqn:DFE} is locally asymptotically stable (LAS). In contrast, if $\mathcal{R}_0=\max\bigl\{\mathcal{R}_{0a}^{V1},\, \mathcal{R}_{0b}^{T1}\bigr\}>1$, then $x_{ab}^{0}$ is unstable.
\end{lemma}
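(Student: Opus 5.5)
The plan is to invoke the next-generation theorem of van den Driessche and Watmough \cite{van2002reproduction}. It states that if the standing hypotheses (A1)--(A5) hold, the DFE is locally asymptotically stable when $\rho(FV^{-1})<1$ and unstable when $\rho(FV^{-1})>1$. The argument therefore reduces to verifying those hypotheses for the splitting $\mathcal{F},\mathcal{V}$ of Appendix~\ref{app:RepNum}, and then identifying $\rho(FV^{-1})$ with $\max\{\mathcal{R}_{0a}^{V1},\mathcal{R}_{0b}^{T1}\}$.

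\textbf{Step 1: hypotheses (A1)--(A4).} Write the system in the normalized variables of Remark~\ref{rem:NormalizedPop}.
\begin{itemize}
\item Each new-infection term in $\mathcal{F}$ is of the form $\xi_V x_{ab}$ or $\xi_B x_{ab}$, so $\mathcal{F}\ge0$.
\item Outflow terms in $\mathcal{V}^-$ are proportional to the compartment itself, so they vanish when that compartment is empty.
\item $\mathcal{F}=0$ on the disease-free set, because both forces of infection vanish there.
\item The disease-free subspace (all eight infected compartments zero) is invariant.
\end{itemize}

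\textbf{Step 2: hypothesis (A5).} We must show that the Jacobian of the disease-free subsystem at $x^0_{ab}$ has only eigenvalues with negative real part. Setting all infected compartments to zero leaves the variables $x_{ss},x_{sr},x_{rs},x_{rr}$. The linearization is triangular after ordering $x_{rr},x_{sr},x_{rs},x_{ss}$, with diagonal entries $-j_{11},-j_3,-j_8,-\mu$. All of these are negative, provided we restrict to the invariant plane $N=1$ or simply keep $\mu>0$.

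We also check that $V$ is a nonsingular M-matrix. It is block lower-triangular with diagonal $j_1,j_2,j_4,j_5,j_6,j_7,j_9,j_{10}>0$ and nonpositive off-diagonal entries, so $V^{-1}\ge0$ and $FV^{-1}\ge0$.

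\textbf{Step 3: the spectral radius.} By \eqref{eqn:FV_Inverse}, only rows 1 and 3 of $FV^{-1}$ are nonzero. Expanding $\det(FV^{-1}-\lambda I)$ along the six zero rows gives $\lambda^6$ times the determinant of the $2\times2$ principal submatrix on indices $\{1,3\}$. That submatrix has entry $(1,3)=0$ and entry $(3,1)=0$, so it is diagonal:
\[
\det\bigl(FV^{-1}-\lambda I\bigr)=\lambda^{6}\bigl(\mathcal{R}_{0b}^{T1}-\lambda\bigr)\bigl(\mathcal{R}_{0a}^{V1}-\lambda\bigr).
\]
Hence $\rho(FV^{-1})=\max\{\mathcal{R}_{0a}^{V1},\mathcal{R}_{0b}^{T1}\}$. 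Both claims of the lemma then follow from \cite[Thm.~2]{van2002reproduction}.

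As a cross-check, the Jacobian $F-V$ on infected compartments decouples at the DFE. The viral block contributes $\beta-j_4$, which is negative if and only if $\mathcal{R}_{0a}^{V1}<1$. The bacterial carrier/infectious block for $(x_{sc},x_{si})$ is
\[
\begin{pmatrix}\beta_c-j_1 & \beta_I\\ \kappa_S & -j_2\end{pmatrix}.
\]
Its determinant is $j_1j_2(1-\mathcal{R}_{0b}^{T1})$. When $\mathcal{R}_{0b}^{T1}<1$ this determinant is positive, which forces $\beta_c<j_1$ and hence a negative trace, so both eigenvalues have negative real part. The remaining diagonal entries are $-j_5,\dots,-j_{10}$.

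The main obstacle is bookkeeping rather than any conceptual difficulty. One has to confirm that the splitting in Appendix~\ref{app:RepNum} actually satisfies (A1)--(A5), in particular that all off-diagonal entries of $V$ are nonpositive. One must also confirm that the zero pattern in \eqref{eqn:FV_Inverse}, especially the vanishing of the $(1,3)$ and $(3,1)$ entries, is correct. If either check fails, I would fall back on the direct block-triangular Jacobian argument above.
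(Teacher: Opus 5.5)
Your argument is correct, but your main route differs from the paper's. The paper does not invoke the next-generation theorem for this lemma. It takes the full $12\times12$ Jacobian $J_0$ at the DFE and extracts ten eigenvalues by cofactor expansion: $-\mu,-j_3,-j_8,-j_{11},-j_5,-j_6,-j_7,-j_9,-j_{10}$ and $\beta-j_4$. It then handles the leftover $2\times2$ block on $(x_{sc},x_{si})$ by trace and determinant, which is exactly your ``cross-check''.

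Your primary argument instead verifies (A1)--(A5) and reads $\rho(FV^{-1})$ off the zero pattern of $FV^{-1}$. The pattern you were worried about does hold:
\begin{itemize}
\item Column 3 of $V$ is $j_4e_3$, so $(FV^{-1})_{13}=F_{13}/j_4=0$ and $(FV^{-1})_{33}=\beta/j_4$.
\item The first column of $V^{-1}$ is $(1/j_1,\kappa_S/(j_1j_2),0,\dots,0)^{\top}$. Since $F_{31}=F_{32}=0$, this gives $(FV^{-1})_{31}=0$, and it also gives $(FV^{-1})_{11}=\mathcal{R}_{0b}^{T1}$.
\end{itemize}

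Two small slips do not affect the conclusion. First, $V$ is not lower-triangular in the given ordering: the entries $-\iota$ at $(1,7)$ and $(2,8)$, and $-\iota_b$ at $(3,6)$, sit above the diagonal. Its dependency graph is acyclic, however, so it is triangular after a permutation and is still a nonsingular M-matrix. Second, the remaining diagonal entries of $F-V$ are $-j_5,-j_6,-j_7,-j_9,-j_{10}$; $j_8$ belongs to the non-infected class $x_{rs}$.

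As for what each approach buys:
\begin{itemize}
\item Your route ties stability directly to the paper's definition $\mathcal{R}_0=\rho(FV^{-1})$. It gets the instability half for free from Theorem~2 of van den Driessche and Watmough. The price is checking the hypotheses and relying on the explicit $FV^{-1}$.
\item The paper's route is self-contained and produces the whole spectrum. It shows that at $\beta=j_4$ (with $\mathcal{R}_{0b}^{T1}<1$) exactly one simple eigenvalue crosses zero while all others remain negative. That is the input the later center-manifold analysis actually uses.
\end{itemize}
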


\noindent The proof of Lemma \ref{lem:DFE_LAS} is deferred to Appendix \ref{app:Derivation of lem:DFE_LAS}. In the following remark, we discuss the interpretations of this lemma.


\begin{remark}[Biological interpretation of Lemma \ref{lem:DFE_LAS}]
\label{rem:DFE_interpretation}
The two conditions $\mathcal{R}_{0a}^{V1} < 1$ and
$\mathcal{R}_{0b}^{T1} < 1$ have clear epidemiological meanings.
\begin{itemize}
    \item $\mathcal{R}_{0a}^{V1} = \beta/j_4$ is the average number of secondary viral infections produced by one typical viral-infected individual in a fully susceptible population during the infectious period $1/j_4$.  The condition $\mathcal{R}_{0a}^{V1} < 1$ means each viral case generates fewer than one new case on average, so
    the viral outbreak cannot sustain itself.

    \item $\mathcal{R}_{0b}^{T1} = (\beta_c j_2 + \beta_I \kappa_S)/(j_1 j_2)$ combines two pathways of bacterial transmission.  The term $\beta_c j_2/(j_1 j_2) = \beta_c/j_1$ represents secondary infections generated by carriers before they either progress to active disease (rate $\kappa_S$), clear spontaneously (rate $\theta$), or die (rate $\mu$).  The term $\beta_I \kappa_S/(j_1 j_2)$ represents the infections generated during the active disease stage, weighted by the probability $\kappa_S/j_1$ that a carrier progresses to active infection.  The condition $\mathcal{R}_{0b}^{T1} < 1$ means the combined bacterial transmission is insufficient to sustain an epidemic.

    \item The threshold $\mathcal{R}_0 = \max\{\mathcal{R}_{0a}^{V1},
    \mathcal{R}_{0b}^{T1}\}$ is the overall basic reproduction number of the co-infection model.  Both disease must individually be below their respective thresholds for the disease-free state to be stable.  If either disease can sustain itself in the absence of the other, the DFE is unstable.
\end{itemize}
\end{remark}

\begin{figure}[pos=ht] 
  \centering
\includegraphics[width=.85\linewidth]{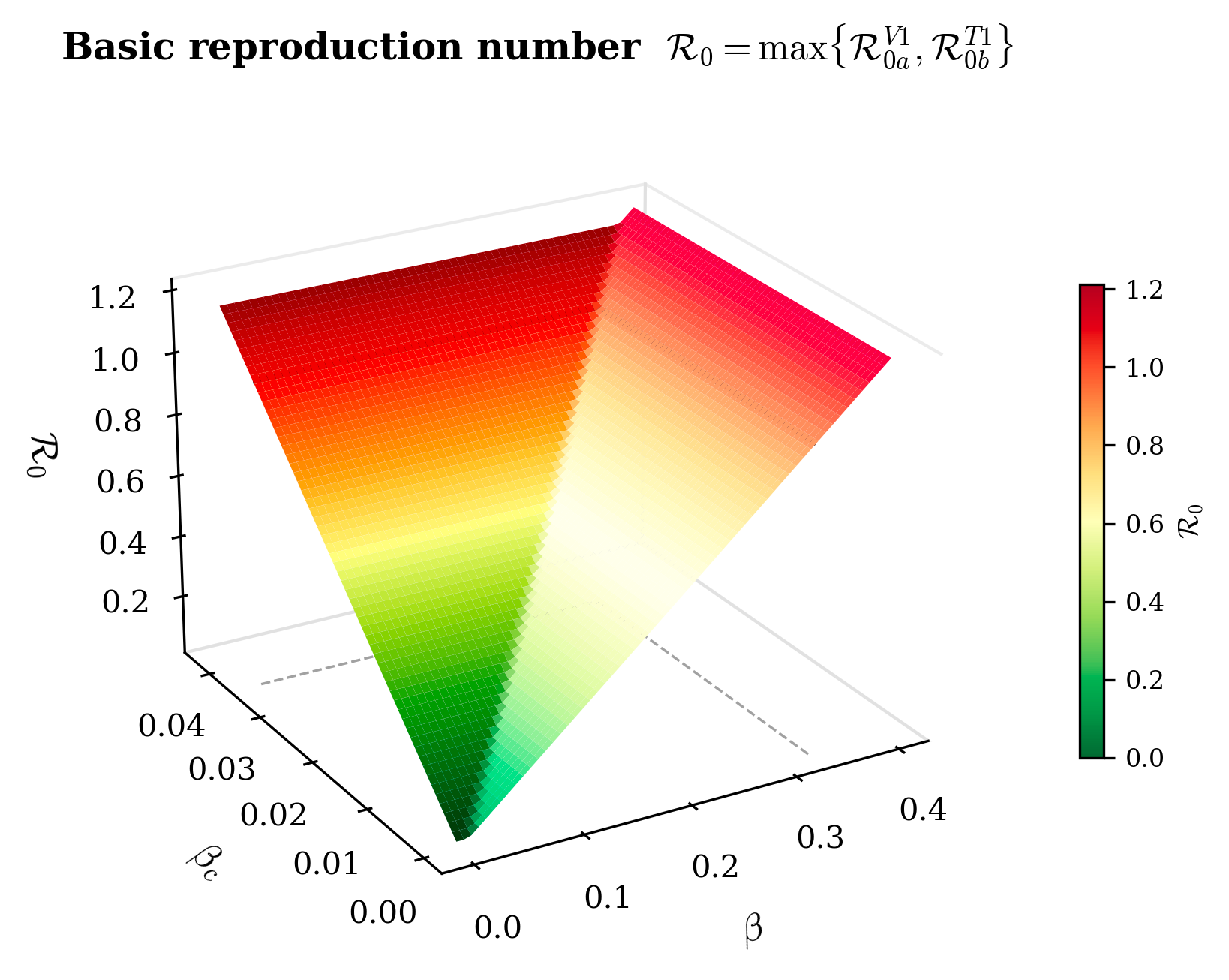} 
  \caption{Joint reproduction number $\mathcal{R}_0 = \max\{\mathcal{R}_{0a}^{V1}, \mathcal{R}_{0b}^{T1}\}$ across viral ($\beta \in [0.0, 0.4]$) and bacterial carrier ($\beta_c \in [0.0, 0.042]$) transmission rates. The dashed contour marks $\mathcal{R}_0=1$, the threshold for local stability of the disease-free equilibrium. Remaining parameters at baseline values (Table~\ref{tab:parameter_justification}), with $\beta_I=0.025$.}
  \label{fig:Max_Repr_Num}
\end{figure}  

Figure \ref{fig:Max_Repr_Num} shows $\mathcal{R}_0 = \max\{\mathcal{R}_{0a}^{V1},\mathcal{R}_{0b}^{T1}\}$ as a piecewise surface: each threshold depends only on its own pathogen's parameters ($\beta,\gamma$ for $\mathcal{R}_{0a}^{V1}$; $\beta_c,\theta,\kappa_S$ for $\mathcal{R}_{0b}^{T1}$), so each wing is flat with respect to the other axis, meeting at the diagonal where $\mathcal{R}_{0a}^{V1}=\mathcal{R}_{0b}^{T1}$ 
at these baseline parameters. The contour $\mathcal{R}_0=1$ (dashed) bounds the region in which both thresholds are below one, the necessary condition for local stability of the disease-free equilibrium (Lemma \ref{lem:DFE_LAS}).

\section{Quasi-Disease-Free Equilibria and Stability}
\label{subsec:QDFE}

We now study solutions of System~\eqref{eqn:MainModel} in which exactly one disease persists while the other is absent. Following \cite{long2008controlling,morris2010analysis}, we call such solutions \emph{quasi-disease-free equilibria} (QDFE): they are endemic with respect to one pathogen and free of the other. We denote by $x_{as}^{bf}$ and $x_{sb}^{vf}$ the quasi-bacteria-free equilibrium (QBFE) and the quasi-virus-free equilibrium (QVFE), respectively.

\subsection{Bacteria-free fixed point \texorpdfstring{$x_{as}^{bf}$}{} and its stability}
\label{subsec:BF_QDFE}

Setting all bacterial compartments to zero ($x_{sc}=x_{si}=x_{sr}=x_{ic}=x_{ii}=x_{ir}=x_{rc}=x_{ri}=x_{rr}=0$)
reduces System~\eqref{eqn:MainModel} to the three-dimensional viral-only system
\begin{equation}
  \begin{aligned}
    x_{ss}' &= \mu - \beta\, x_{is}\, x_{ss} + \iota\, x_{rs} - \mu\, x_{ss},\\
    x_{is}' &= \beta\, x_{is}\, x_{ss} - j_4\, x_{is},\\
    x_{rs}' &= \gamma\, x_{is} - j_8\, x_{rs},
  \end{aligned}
  \label{eqn:ReducedModel_NoBacterial}
\end{equation}
subject to initial conditions~\eqref{eqn:Initial_data} and normalization $x_{ss}+x_{is}+x_{rs}=1$.

\begin{lemma}[Existence and uniqueness of the QBFE]
\label{lem:QBFE_existence}
The System~\eqref{eqn:ReducedModel_NoBacterial} has a unique equilibrium of endimc bacteria-free $x_{as}^{bf}$, given by~\eqref{eqn:QBFE}--\eqref{eqn:Bacteria-free fixed}, if and only if $\mathcal{R}_{0a}^{V1}>1$. When $\mathcal{R}_{0a}^{V1}\leq 1$, the only non-negative equilibrium is the disease-free state $x_{ab}^{0}$.
\end{lemma}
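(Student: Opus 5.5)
We will solve the equilibrium equations of System~\eqref{eqn:ReducedModel_NoBacterial} directly, using the normalization $x_{ss}+x_{is}+x_{rs}=1$ from Remark~\ref{rem:NormalizedPop}. Throughout we work in the closed simplex, which is invariant by Theorem~\ref{thm:InvariantRegion}, and we look for non-negative equilibria.

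The first step is to use the second equation, which factors as
\[
0=x_{is}\bigl(\beta x_{ss}-j_4\bigr).
\]
This splits the analysis into two cases.

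\emph{Case $x_{is}=0$.} The third equation forces $x_{rs}=0$, since $j_8=\iota+\mu>0$. Normalization then gives $x_{ss}=1$, which is the disease-free state $x_{ab}^{0}$.

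\emph{Case $x_{is}\neq 0$.} Here $x_{ss}^{*}=j_4/\beta=1/\mathcal{R}_{0a}^{V1}$ and $x_{rs}^{*}=(\gamma/j_8)\,x_{is}^{*}$. Substituting into the normalization gives
\[
x_{is}^{*}\Bigl(1+\frac{\gamma}{j_8}\Bigr)=1-\frac{1}{\mathcal{R}_{0a}^{V1}},
\qquad\text{so}\qquad
x_{is}^{*}=\frac{j_8\,(\mathcal{R}_{0a}^{V1}-1)}{\mathcal{R}_{0a}^{V1}\,(j_8+\gamma)} .
\]
The same formula follows from the first equation with $\mu$ in place of $\Lambda$, since $\mu(1-x_{ss})=\beta x_{is}x_{ss}-\iota x_{rs}$ reduces to $j_4 x_{is}-\iota x_{rs}$. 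We will check that these two routes agree, because the first equation and the normalization are not independent: the sum of the three equations is $\mu(1-N)$. This consistency check is the only place where care is needed, and it amounts to verifying $\mu+\gamma+\iota\cdot 0\ldots$, i.e. $j_4-\iota\gamma/j_8=\mu(1+\gamma/j_8)$. This holds because $j_4 j_8-\iota\gamma=\mu(j_8+\gamma)$.

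The final step is to decide when the nontrivial equilibrium is feasible. We have $x_{is}^{*}>0$ exactly when $\mathcal{R}_{0a}^{V1}>1$. In that case $x_{ss}^{*}\in(0,1)$, $x_{rs}^{*}>0$, and the three components sum to $1$, so the equilibrium lies in $\Omega$. Uniqueness is immediate, because the nontrivial branch determines $x_{ss}^{*}$, and hence $x_{is}^{*}$ and $x_{rs}^{*}$, uniquely.

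Conversely, suppose $\mathcal{R}_{0a}^{V1}\le 1$. If $\mathcal{R}_{0a}^{V1}<1$, then $x_{is}^{*}<0$, which is infeasible. If $\mathcal{R}_{0a}^{V1}=1$, then $x_{is}^{*}=0$, and this point collapses onto the disease-free state. In either situation the only non-negative equilibrium is $x_{ab}^{0}$, which proves the ``if and only if''.

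The main obstacle is bookkeeping rather than conceptual: we must confirm that the expressions agree with the paper's displayed formulas \eqref{eqn:QBFE}--\eqref{eqn:Bacteria-free fixed}, which may be written in terms of $\beta$, $j_4$ and $j_8$ rather than $\mathcal{R}_{0a}^{V1}$.
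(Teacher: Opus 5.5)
Your proof is correct and follows the paper's route. The paper also factors the second equilibrium equation, obtains the DFE when $x_{is}=0$, and when $x_{is}\neq 0$ gets $x_{ss}=1/\mathcal{R}_{0a}^{V1}$ and $x_{rs}=(\gamma/j_8)x_{is}$, then fixes $x_{is}$ from the normalization. Your $x_{is}^{*}$ coincides with \eqref{eqn:Bacteria-free fixed} because $j_8+\gamma=j_4+\iota$. Your extra check that the first equation is automatically satisfied (the right-hand sides sum to $\mu(1-N)$) is a point the paper leaves implicit; do tidy up the garbled expression just before the identity $j_4j_8-\iota\gamma=\mu(j_8+\gamma)$.
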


\begin{proof}
See Appendix \ref{app:proof_QBFE_existence}.
\end{proof}

The unique QBFE is
\begin{equation}
  x_{as}^{bf}
  = \bigl[x_{ss}^{bf},\,0,\,0,\,0,\,x_{is}^{bf},\,0,\,0,\,0,\,
           x_{rs}^{bf},\,0,\,0,\,0\bigr],
  \label{eqn:QBFE}
\end{equation}
with non-zero components
\begin{equation}
  x_{ss}^{bf} = \frac{1}{\mathcal{R}_{0a}^{V1}},
  \qquad
  x_{is}^{bf} = \frac{j_8}{j_4+\iota}
                \!\left(1-\frac{1}{\mathcal{R}_{0a}^{V1}}\right),
  \qquad
  x_{rs}^{bf} = \frac{\gamma}{j_8}\,x_{is}^{bf}.
  \label{eqn:Bacteria-free fixed}
\end{equation}
These are biologically meaningful (positive) if and only if
$\mathcal{R}_{0a}^{V1}>1$.

\begin{lemma}[Local asymptotic stability of the QBFE] \label{lem:QBFE_stability}
If  $\mathcal{R}_{0a}^{V1}>1$, the equilibrium $x_{as}^{bf}$ is locally asymptotically stable with respect to
System~\eqref{eqn:ReducedModel_NoBacterial}.
\end{lemma}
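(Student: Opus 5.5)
We linearize System~\eqref{eqn:ReducedModel_NoBacterial} at $x_{as}^{bf}$ and show that every eigenvalue of the Jacobian has negative real part.

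\textbf{Step 1: reduce to two dimensions.} Lemma~\ref{lem:Boundedness} with $\Lambda=\mu$ gives the conservation law $x_{ss}+x_{is}+x_{rs}=1$. Substituting $x_{ss}=1-x_{is}-x_{rs}$ yields the planar system
\[
x_{is}'=\beta x_{is}(1-x_{is}-x_{rs})-j_4x_{is},\qquad x_{rs}'=\gamma x_{is}-j_8x_{rs}.
\]
Working in the plane avoids the spurious zero eigenvalue of the three-dimensional Jacobian, which is transverse to the invariant simplex.

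\textbf{Step 2: Jacobian at the equilibrium.} At the equilibrium, $\beta x_{ss}^{bf}=j_4$, since $x_{ss}^{bf}=1/\mathcal{R}_{0a}^{V1}$. The $(1,1)$ entry $\beta(1-2x_{is}-x_{rs})-j_4$ therefore simplifies to $-\beta x_{is}^{bf}$. The Jacobian is
\[
J=\begin{pmatrix}-\beta x_{is}^{bf} & -\beta x_{is}^{bf}\\ \gamma & -j_8\end{pmatrix}.
\]
Its trace and determinant are
\[
\operatorname{tr}J=-\beta x_{is}^{bf}-j_8<0,\qquad \det J=\beta x_{is}^{bf}(j_8+\gamma)>0.
\]
Both signs use only $x_{is}^{bf}>0$, which holds exactly when $\mathcal{R}_{0a}^{V1}>1$ by Lemma~\ref{lem:QBFE_existence}, together with $j_8=\iota+\mu>0$. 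The planar Routh--Hurwitz criterion then shows that both eigenvalues have negative real parts, so $x_{as}^{bf}$ is locally asymptotically stable.

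\textbf{Step 3: optional full-system cross-check.} If one prefers the full $3\times3$ Jacobian, its eigenvalues should be $-\mu$ together with the two eigenvalues of $J$. One eigenvalue $-\mu$ comes from the dynamics of the sum $N'=\mu-\mu N$.

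\textbf{Main obstacle.} The only delicate point is correctly using the equilibrium identity $\beta x_{ss}^{bf}=j_4$ to cancel the $(1,1)$ entry. Once that cancellation is made, the sign analysis is immediate.
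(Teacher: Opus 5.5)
Your proof is correct, but it takes a different route from the paper. The paper keeps all three variables and writes the full Jacobian $\mathcal{J}^{bf}_1$. It then lists the coefficients $a_1,a_2,a_3$ of its characteristic cubic and appeals to the cubic Routh--Hurwitz condition $a_1a_2>a_3$, which it asserts follows ``by expansion''. You instead use the conservation law to work on the simplex, where the planar trace--determinant test is immediate.

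Your Step~3 is what links the two, and you can state it as a fact rather than as ``should be''. Every column of $\mathcal{J}^{bf}_1$ sums to $-\mu$, so $(1,1,1)$ is a left eigenvector with eigenvalue $-\mu$. The plane $\{v : v_1+v_2+v_3=0\}$ is then invariant, and in the coordinates $(x_{is},x_{rs})$ the Jacobian acts on that plane exactly as your $J$. Hence the cubic factors as $(\lambda+\mu)\bigl(\lambda^2+(\beta x_{is}^{bf}+j_8)\lambda+\beta x_{is}^{bf}(j_8+\gamma)\bigr)$. This gives $a_1=\beta x_{is}^{bf}+\mu+j_8$, $a_2=\beta x_{is}^{bf}(j_4+j_8)+\mu j_8$ and $a_3=\mu\beta x_{is}^{bf}(j_8+\gamma)$, and makes $a_1a_2>a_3$ evident.

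This also checks the paper's computation. Its printed $a_3$ agrees with the above. However, its $a_1$ has $\mathcal{R}_{0a}^{V1}$ where $\beta$ belongs, and its $a_2$ is the correct coefficient divided by $j_4j_8$, so the paper's verification rests on misprinted coefficients.

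The paper's route has the merit of handling the unreduced three-dimensional system directly, with no change of variables. Yours is shorter and more transparent, and once Step~3 is included it covers perturbations off the simplex as well.

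One correction: the eigenvalue transverse to the simplex is $-\mu$, not zero. The remark in Step~1 about a ``spurious zero eigenvalue'' is therefore wrong and contradicts your own Step~3; the planar reduction is a convenience, not a necessity.
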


\begin{proof}
See Appendix \ref{app:proof_QBFE_stability}.
The Jacobian of~\eqref{eqn:ReducedModel_NoBacterial} in $x_{as}^{bf}$ is the matrix $3\times3$
\begin{equation}
  \mathcal{J}^{bf}_{1} =
  \begin{pmatrix}
    -x_{is}^{bf}\beta - \mu & -x_{ss}^{bf}\beta & \iota \\
     x_{is}^{bf}\beta        &  x_{ss}^{bf}\beta - j_4 & 0 \\
    0                        & \gamma              & -j_8
  \end{pmatrix}.
  \label{eqn:J_bf_1}
\end{equation}
Its characteristic polynomial $P_1^{bf}(\lambda)=\lambda^3+a_1\lambda^2+a_2\lambda+a_3$ has all positive coefficients and satisfies $a_1a_2>a_3$ whenever $\mathcal{R}_{0a}^{V1}>1$; the Routh--Hurwitz criterion then gives
local asymptotic stability.
\end{proof}

\begin{theorem}[Forward bifurcation at $\mathcal{R}_{0a}^{V1}=1$] \label{thm:ReducedModel_NoBacterial_4ward} 
Under System~\eqref{eqn:ReducedModel_NoBacterial}, the bifurcation at $\mathcal{R}_{0a}^{V1}=1$ is supercritical (forward): the endemic equilibrium $x_{as}^{bf}$ bifurcates continuously from the DFE as $\mathcal{R}_{0a}^{V1}$ increases through~$1$.
\end{theorem}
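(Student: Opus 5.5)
We prove Theorem~\ref{thm:ReducedModel_NoBacterial_4ward} in two ways: directly from the explicit equilibrium, and by the center-manifold criterion of Castillo-Chavez and Song \cite{castillo2004dynamical}. The second is the method the paper uses later for the bacterial and coexistence cases.

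\textbf{Direct argument.} By Lemma~\ref{lem:QBFE_existence} and \eqref{eqn:Bacteria-free fixed}, the endemic branch has
\[
x_{is}^{bf}=\frac{j_8}{j_4+\iota}\Bigl(1-\frac{1}{\mathcal{R}_{0a}^{V1}}\Bigr),
\]
which is continuous in $\mathcal{R}_{0a}^{V1}$ and equals zero at $\mathcal{R}_{0a}^{V1}=1$. It is positive exactly when $\mathcal{R}_{0a}^{V1}>1$. Hence no endemic equilibrium exists for $\mathcal{R}_{0a}^{V1}<1$, and the branch leaves the DFE with positive slope
\[
\frac{\mathrm{d}x_{is}^{bf}}{\mathrm{d}\mathcal{R}_{0a}^{V1}}\Big|_{\mathcal{R}_{0a}^{V1}=1}=\frac{j_8}{j_4+\iota}>0.
\]
Lemma~\ref{lem:DFE_LAS} restricted to the viral subsystem and Lemma~\ref{lem:QBFE_stability} give the exchange of stability: the DFE is stable below threshold and the QBFE is stable above it. This is the picture of a forward (transcritical) bifurcation.

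\textbf{Center-manifold argument.} For rigor, we also compute the coefficients $a$ and $b$ of Theorem~4.1 in \cite{castillo2004dynamical}.
\begin{enumerate}
\item Take $\beta$ as the bifurcation parameter, with critical value $\beta^*=j_4$ (equivalently $\mathcal{R}_{0a}^{V1}=1$).
\item Write the variables as $(y_1,y_2,y_3)=(x_{ss},x_{is},x_{rs})$ and linearize \eqref{eqn:ReducedModel_NoBacterial} at the DFE $(1,0,0)$ with $\beta=\beta^*$. The Jacobian is
\[
\begin{pmatrix}-\mu&-\beta^*&\iota\\0&0&0\\0&\gamma&-j_8\end{pmatrix}.
\]
It has a simple zero eigenvalue; the other eigenvalues are $-\mu$ and $-j_8$, both negative.
\item Right eigenvector $w$: take $w_2=1$, which gives $w_3=\gamma/j_8$ and $w_1=(\iota\gamma/j_8-j_4)/\mu$. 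Here $w_1<0$, since $\iota\gamma/j_8<\gamma<j_4$.
\item Left eigenvector: $v=(0,1,0)$, because the second row of the Jacobian is zero.
\item Only $f_2=\beta y_2y_1-j_4y_2$ contributes. Its second derivatives give
\[
a=2v_2w_1w_2\,\partial^2 f_2/\partial y_1\partial y_2=2\beta^* w_1<0,
\qquad
b=v_2w_2\,\partial^2 f_2/\partial y_2\partial\beta=1>0.
\]
\end{enumerate}

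Since $a<0$ and $b>0$, the bifurcation is forward. One subtlety is that the system is constrained to $x_{ss}+x_{is}+x_{rs}=1$. The cleanest choice is to eliminate $x_{ss}$ and work in two dimensions. Alternatively, keep all three equations: since $\Lambda=\mu$ makes the simplex invariant, the eigenvector computation stays valid.

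The main obstacle is not the sign of $a$, which is immediate. It is checking the hypotheses of \cite{castillo2004dynamical}: the zero eigenvalue must be simple and all other eigenvalues must have negative real part. We check this from the triangular structure of the Jacobian after deleting the second row and column, which leaves eigenvalues $-\mu$ and $-j_8$.
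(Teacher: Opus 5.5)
Your center-manifold computation follows the same route as the paper's proof. Both apply Theorem~4.1 of \cite{castillo2004dynamical} with $\beta^*=j_4$ and left eigenvector $v=(0,1,0)$, so only the $x_{is}$-equation contributes and $a<0$, $b=1>0$. Your normalization $w_2=1$ makes $v\cdot w=1$ and $b=1$ hold exactly, whereas the paper's $w=(-(\gamma+\iota+\mu),j_8,\gamma)^{\top}$ gives $v\cdot w=j_8$. Your extra direct argument, using the explicit branch $x_{is}^{bf}=\frac{j_8}{j_4+\iota}(1-1/\mathcal{R}_{0a}^{V1})$ together with Lemmas~\ref{lem:QBFE_existence} and~\ref{lem:QBFE_stability}, is a valid and more elementary confirmation that the paper does not give. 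The negative component $w_1<0$ is admissible in the theorem (which nominally asks for a nonnegative $w$) because it belongs to $x_{ss}$, which is positive at the DFE; the paper leaves this point implicit too.
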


\begin{proof}
See Appendix \ref{app:proof_bifurcation}. Applying Theorem~4.1 of \cite{castillo2004dynamical} with the bifurcation parameter $\beta^*=j_4$ (the value at which $\mathcal{R}_{0a}^{V1}=1$), the directional coefficients are
\[
  a = -\frac{j_8\,\beta^*}{j_4+\iota} < 0
  \qquad\text{and}\qquad
  b = 1 > 0,
\]
confirming a supercritical bifurcation.
\end{proof}

\begin{figure}[pos=ht] 
  \centering
\includegraphics[width=.85\linewidth]{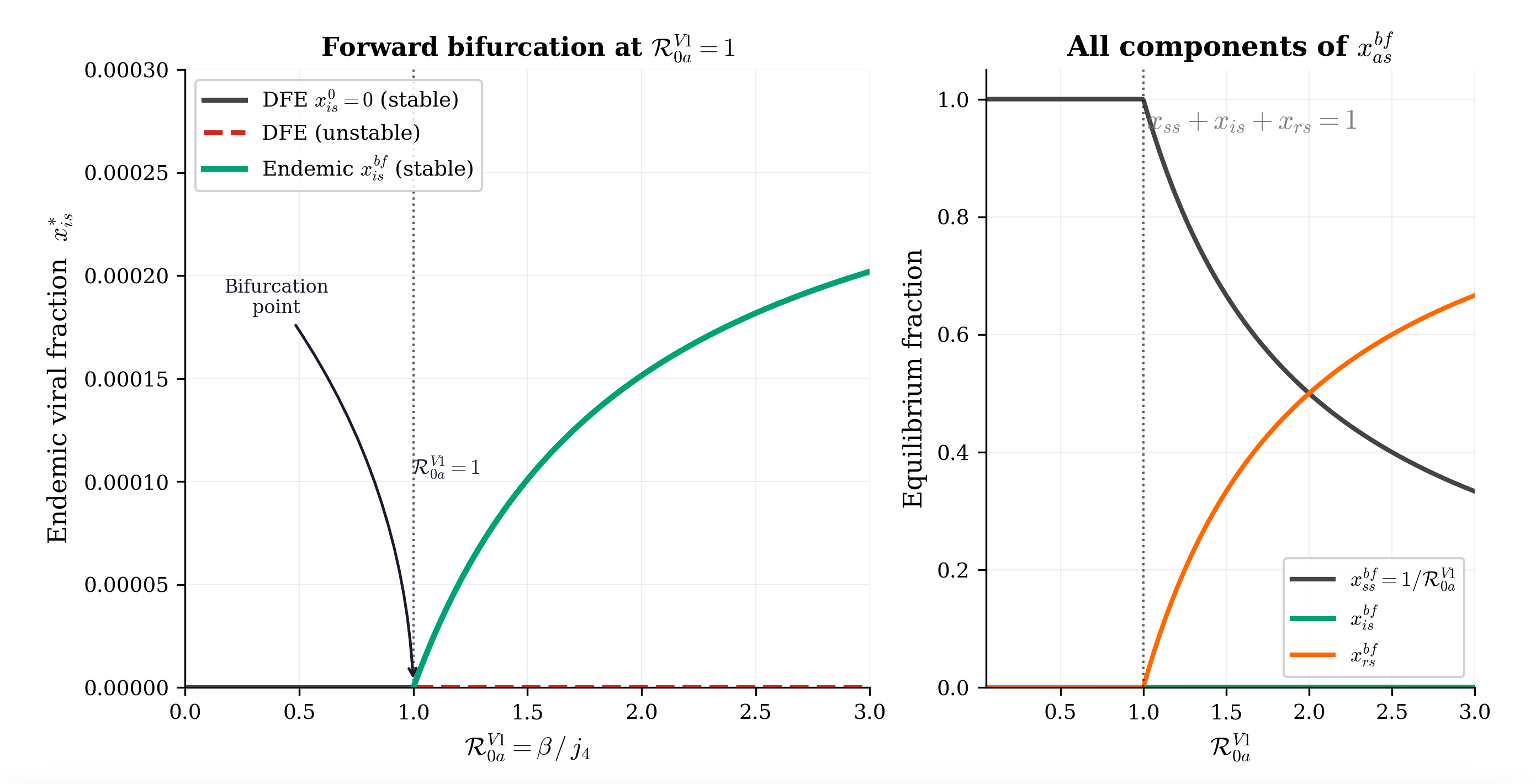} 
  \caption{Forward bifurcation diagram and equilibrium component profiles of the bacteria-free subsystem evaluated at fixed baseline values (\(\gamma = 0.33\), \(\mu = 0.0001\), \(\iota = 0.00\)).}
\label{fig:QBFE_Forward_bifurcation}
\end{figure}

Figure \ref{fig:QBFE_Forward_bifurcation} shows the forward transcritical bifurcation of Theorem \ref{thm:ReducedModel_NoBacterial_4ward}: the DFE (black) is stable for $\mathcal{R}_{0a}^{V1}<1$ and unstable (dashed) beyond it, exchanging stability with the endemic branch $x_{as}^{bf}$ (green) at $\mathcal{R}_{0a}^{V1}=1$. The infectious fraction $x_{is}^{bf}$ stays below $2\times10^{-4}$ throughout the plotted range, reflecting the high recovery rate $\gamma=0.33$; susceptibles decline as $x_{ss}^{bf}=1/\mathcal{R}_{0a}^{V1}$ (right panel) while recovered individuals rise correspondingly, consistent with $x_{ss}+x_{is}+x_{rs}=1$.

\subsection{Bacterial invasion of the virus-endemic state}
\label{subsec:BacteriaEmpire}

Having established that QBFE $x_{as}^{bf}$ exists and is locally asymptotically stable when $\mathcal{R}_{0a}^{V1}>1$, we now ask: \emph{Can bacterial disease invade a population in which viral disease is already endemic?} This is governed by the stability of $x_{as}^{bf}$ to bacterial perturbations in the full $12$-dimensional System~\eqref{eqn:MainModel}.

\paragraph{Block-triangular structure.}
The Jacobian of System~\eqref{eqn:MainModel} evaluated at $x_{as}^{bf}$ has the upper-triangular block form
\begin{equation}
  \mathcal{J}^{bf}\!\left(x_{as}^{bf}\right)
  =
  \begin{pmatrix}
    \mathcal{J}^{bf}_{1(3\times3)} & * & * \\
    \mathbf{0} & \mathcal{J}^{bf}_{2(6\times6)} & \mathbf{0} \\
    \mathbf{0} & * &   \mathcal{J}^{bf}_{3(3\times3)}
  \end{pmatrix},
  \label{eqn:BacteriaFreeMatrix}
\end{equation}
where $*$ denotes non-zero off-diagonal blocks. Because the matrix is an upper-triangular block, its spectrum is the union of the spectra of the three diagonal blocks \cite{horn2012matrix}. Lemma \ref{lem:QBFE_stability} establishes that $\mathcal{J}^{bf}_{1}$ is stable whenever $\mathcal{R}_{0a}^{V1}>1$, and Lemma \ref{lem:J3_stability} below shows that $\mathcal{J}^{bf}_{3}$ is stable under the same conditions. Consequently, the stability of $x_{as}^{bf}$ in the entire system reduces to the stability of the central block $\mathcal{J}^{bf}_{2}$.

\paragraph{The invasion reproduction number $\mathbf{Inv}^{\mathbf{B}}$.} The block $\mathcal{J}^{bf}_{2(6\times6)}$ governs the linearized dynamics of the six bacterial compartments $(x_{sc},\,x_{ic},\,x_{rc},\,x_{si},\,x_{ii},\,x_{ri})$ in a host population whose viral component is fixed in the endemic equilibrium $x_{as}^{bf}$. By the next-generation theorem \cite{van2002reproduction}, the stability
of $\mathcal{J}^{bf}_{2}$ is determined by the spectral radius of the corresponding next-generation matrix $F_1 V_1^{-1}$. The matrices $F_1$ and $V_1$ are given in Appendix \ref{ssst:Coinfected matrices}.

\begin{itemize}
  \item If $\rho(F_1 V_1^{-1})<1$, all the eigenvalues of $\mathcal{J}^{bf}_{2}$ have strictly negative real parts and the bacteria cannot invade;
  \item if $\rho(F_1 V_1^{-1})>1$, $\mathcal{J}^{bf}_{2}$ has at least one eigenvalue with positive real part and bacterial invasion occurs.
\end{itemize}

We define the \emph{bacterial invasion reproduction number}
\begin{equation}
  \mathbf{Inv}^{\mathbf{B}}
  \;=\; \rho\!\left(F_1 V_1^{-1}\right)
  \;=\; \mathcal{R}_{0b}^{T1}\,x_{ss}^{bf}
       + \mathcal{R}_{0b}^{T2}\,x_{is}^{bf}
       + \mathcal{R}_{0b}^{T3}\,x_{rs}^{bf},
  \label{eqn:BacteriaInvasion}
\end{equation}
where $\mathcal{R}_{0b}^{T1}$ is given in Equation \eqref{eqn:R0_main},
\begin{equation}
  \mathcal{R}_{0b}^{T2} = \mathcal{R}_{0b}^{I2}+\mathcal{R}_{0b}^{C2},
  \qquad
  \mathcal{R}_{0b}^{T3} = \mathcal{R}_{0b}^{I4}+\mathcal{R}_{0b}^{C1}.
  \label{eqn:R0bT2T3}
\end{equation}
The component reproduction numbers are given in Appendix \ref{app:RepNum} (equations~\eqref{eqn:R0bI2} to \eqref{eqn:R0bC1}).

$\mathbf{Inv}^{\mathbf{B}}$ is the average number of new bacterial infections started by a carrier or infectious individual entering a population where influenza is already endemic. It splits into three parts, one for each group of hosts at the QBFE: virally susceptible ($x_{ss}^{bf}$), virally infected ($x_{is}^{bf}$), and virally recovered ($x_{rs}^{bf}$). The first group behaves like an ordinary host, so the standard bacterial reproduction number $\mathcal{R}_{0b}^{T1}$ applies directly. The second group matters most: influenza weakens the immune response, so $\mathcal{R}_{0b}^{T2}>\mathcal{R}_{0b}^{T1}$, meaning bacterial spread is higher in currently-infected hosts than in ordinary susceptibles -- this is the co-infection pathway. The third group carries a smaller, residual effect from having had influenza and possibly losing that immunity again. Because $\mathcal{R}_{0b}^{T2}>\mathcal{R}_{0b}^{T1}$, an endemic influenza infection generally makes it easier for pneumococcus to invade: $\mathbf{Inv}^{\mathbf{B}}>\mathcal{R}_{0b}^{T1}$ is possible even when $\mathcal{R}_{0b}^{T1}<1$ on its own. This amplification effect grows with $x_{is}^{bf}$, which itself increases with the viral reproduction number $\mathcal{R}_{0a}^{V1}$. Table~\ref{tab:QBFE_Stability} summarises the resulting long-term outcomes.

\begin{table}[pos=ht]
  \centering
  \caption{Stability of the QBFE $x_{as}^{bf}$ and long-term outcomes.}
  \label{tab:QBFE_Stability}
  \begin{tabular}{@{}llll@{}}
    \hline
    \textbf{Region} & \textbf{Conditions} & \textbf{Long-term outcome} \\
    \hline
    R1 & $\mathcal{R}_{0a}^{V1}>1$,\; $\mathbf{Inv}^{\mathbf{B}}<1$
       & QBFE $x_{as}^{bf}$ stable; bacteria-free \\
    R2 & $\mathcal{R}_{0a}^{V1}>1$,\; $\mathbf{Inv}^{\mathbf{B}}>1$
       & QBFE $x_{as}^{bf}$ unstable; bacterial invasion \\
    R3 & $\mathcal{R}_{0a}^{V1}>1$,\; $\mathbf{Inv}^{\mathbf{B}}=1$
       & Bifurcation threshold; outcome determined by higher-order terms \\
    R4 & $\mathcal{R}_{0a}^{V1}\leq 1$
       & DFE $x_{ab}^{0}$ is the only non-negative equilibrium \\
    \hline
  \end{tabular}
\end{table}

\begin{lemma}[Stability of $\mathcal{J}^{bf}_{3}$]
\label{lem:J3_stability}
If $\mathcal{R}_{0a}^{V1}>1$, the sub-matrix
\begin{equation}
  \mathcal{J}^{bf}_{3} =
  \begin{pmatrix}
    -x_{is}^{bf}\beta - j_3 & 0      & \iota  \\
     x_{is}^{bf}\beta        & -j_7   & 0      \\
    0                        & \gamma & -j_{11}
  \end{pmatrix}
  \label{eqn:J_bf_3}
\end{equation}
is stable, i.e.\ all its eigenvalues have strictly negative real part.
\end{lemma}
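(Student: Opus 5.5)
The plan is to compute the characteristic polynomial of $\mathcal{J}^{bf}_{3}$ directly and apply the Routh--Hurwitz criterion, mirroring the argument of Lemma \ref{lem:QBFE_stability}. Write $p:=x_{is}^{bf}\beta\ge 0$; by Lemma \ref{lem:QBFE_existence} and \eqref{eqn:Bacteria-free fixed}, $p>0$ precisely when $\mathcal{R}_{0a}^{V1}>1$. Set $d_1=p+j_3$, $d_2=j_7$, $d_3=j_{11}$, all strictly positive since $\mu>0$.

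The matrix has the cyclic sparsity pattern $(1,1),(2,1),(2,2),(3,2),(3,3),(1,3)$. Expanding $\det(\lambda I-\mathcal{J}^{bf}_{3})$ along the first row gives
\[
P_3(\lambda)=(\lambda+d_1)(\lambda+d_2)(\lambda+d_3)-\iota\,p\,\gamma .
\]
Hence
\[
a_1=d_1+d_2+d_3,\qquad a_2=d_1d_2+d_1d_3+d_2d_3,\qquad a_3=d_1d_2d_3-\iota\gamma p .
\]

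The main step is to show $a_3>0$. Use $j_7=\gamma+\iota_b+\mu>\gamma$ and $j_{11}=\iota+\iota_b+\mu>\iota$, together with $d_1\ge p$. Then
\[
d_1d_2d_3 > p\,\gamma\,\iota,
\]
strictly, because $\mu>0$. If $\iota=0$, as at baseline, this is trivial.

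It remains to check $a_1a_2>a_3$. Expanding $a_1a_2$ gives $d_1d_2d_3$ three times plus the terms $d_i^2d_j$, all positive. So
\[
a_1a_2>d_1d_2d_3\ge a_3 .
\]
With $a_1,a_3>0$ and $a_1a_2>a_3$, the cubic Routh--Hurwitz conditions hold, so all eigenvalues have negative real part.

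As a cross-check, $\mathcal{J}^{bf}_{3}$ is a Metzler matrix with negative diagonal. It is strictly column diagonally dominant, since its column sums are $-j_3$, $\gamma-j_7$ and $\iota-j_{11}$, each negative (note $j_3>0$). Stability therefore also follows from Gershgorin's theorem applied to the columns. I would mention this second route because it avoids any algebra.

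The only delicate point is the sign of $a_3$ when $\iota>0$. The comparison $j_7>\gamma$, $j_{11}>\iota$ handles it. The hypothesis $\mathcal{R}_{0a}^{V1}>1$ enters only to guarantee $p\ge 0$, so that the equilibrium $x_{as}^{bf}$ is feasible.
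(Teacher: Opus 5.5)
Your proposal is correct and follows the paper's route: the same characteristic cubic, checked by Routh--Hurwitz. The paper writes $c_3$ in manifestly positive form via the identity $j_7j_{11}-\iota\gamma=j_3(\gamma+j_{11})$ and verifies $c_1c_2-c_3=(x_{is}^{bf}\beta+j_7+j_{11})(c_2+j_3^2)>0$ by direct expansion, whereas you reach the same two inequalities through $j_7>\gamma$, $j_{11}>\iota$ and $a_1a_2>d_1d_2d_3\ge a_3$. Your Gershgorin cross-check is also valid and shorter: every column sum of $\mathcal{J}^{bf}_{3}$ equals $-j_3<0$, so in fact $-j_3$ is itself an eigenvalue with left eigenvector $(1,1,1)$.
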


\begin{proof}
See Appendix \ref{app:proof_J3_stability}. The characteristic polynomial is $P_3^{bf}(\lambda)=\lambda^3+c_1\lambda^2+c_2\lambda+c_3$, where
\begin{align*}
  c_1 &= x_{is}^{bf}\beta + j_3 + j_7 + j_{11}, \\
  c_2 &= x_{is}^{bf}\beta(j_7+j_{11}) + j_3 j_7 + j_3 j_{11} + j_7 j_{11}, \\
  c_3 &= x_{is}^{bf}\beta\,j_3(\gamma+j_{11}) + j_3 j_7 j_{11}.
\end{align*}
All three coefficients are strictly positive when $\mathcal{R}_{0a}^{V1}>1$, and the Routh--Hurwitz condition $c_1c_2>c_3$ holds by direct expansion.
\end{proof}

Although the bifurcation analysis in Figure \ref{fig:QBFE_Forward_bifurcation} outlines the long-term steady states of the bacteria-free subsystem as a function of reproductive fitness, it does not reveal the transient pathways or temporal behavior of the host population. To fully map out these underlying system dynamics, the corresponding state-space paths and explicit time-series simulations across the subcritical, critical, and epidemic regimes are compiled in the appendices. Specifically, Appendix \ref{app:QBFE_phase_space} (Figure \ref{fig:QBFE_Phase_Plane}) evaluates the geometric vector fields and phase-plane loops showing trajectory convergence, while Appendix \ref{app:QBFE_time_series} (Figure \ref{fig:QBFE_Time_series}) traces the explicit daily timelines of each population compartment under various initial conditions toward their mathematical equilibria.

Having fully characterized when bacteria can invade a virus-endemic population, we now address the complementary question: under what conditions can the viral disease invade a population in which the
bacterial disease is already endemic? This requires the analysis of the virus-free fixed point $x_{sb}^{vf}$, to which we turn next.

\subsection{Virus-free fixed point \texorpdfstring{$x_{sb}^{vf}$}{} and its stability}
\label{subsec:VF_QDFE}

Setting all viral-infected compartments to zero ($x_{is}=x_{ic}=x_{ii}=x_{ir}=0$, i.e.\ $x_{Ib}=0$) reduces System~\eqref{eqn:MainModel} to the four-dimensional bacterial-only system
\begin{equation}
  \begin{aligned}
    x_{ss}'(t) &= \mu
                  - (\beta_c x_{sc}+\beta_I x_{si})\,x_{ss}
                  + \iota_b\,x_{sr} - \mu\,x_{ss},\\
    x_{sc}'(t) &= (\beta_c x_{sc}+\beta_I x_{si})\,x_{ss}
                  - j_1\,x_{sc},\\
    x_{si}'(t) &= \kappa_S\,x_{sc} - j_2\,x_{si},\\
    x_{sr}'(t) &= \gamma_b\,x_{si} + \theta\,x_{sc} - j_3\,x_{sr},
  \end{aligned}
  \label{eqn:ReducedModel_NoVirus}
\end{equation}
subject to initial conditions~\eqref{eqn:Initial_data} and the normalization $x_{ss}+x_{sc}+x_{si}+x_{sr}=1$.

\begin{lemma}[Existence and uniqueness of the QVFE]
\label{lem:Unique_QVFE}
System~\eqref{eqn:ReducedModel_NoVirus} has a unique virus-free endemic equilibrium $x_{sb}^{vf}$, given by~\eqref{eqn:QVFE}--\eqref{eqn:Virus-free fixed}, if and only if $\mathcal{R}_{0b}^{T1}>1$. When $\mathcal{R}_{0b}^{T1}\leq 1$, the only non-negative equilibrium is the disease-free state $x_{ab}^{0}$.
\end{lemma}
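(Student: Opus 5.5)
We solve the equilibrium equations of System~\eqref{eqn:ReducedModel_NoVirus} directly, parametrising everything by the carrier fraction $x_{sc}$. We use the normalisation $x_{ss}+x_{sc}+x_{si}+x_{sr}=1$, which is preserved by Lemma~\ref{lem:Boundedness} with $\Lambda=\mu$.

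\textbf{Linear relations.} The third equation gives $x_{si}=(\kappa_S/j_2)\,x_{sc}$. Substituting into the fourth gives
\[
x_{sr}=\frac{\gamma_b\kappa_S+\theta j_2}{j_2 j_3}\,x_{sc}.
\]
The force of infection at equilibrium is then $\xi_B=(\beta_c x_{sc}+\beta_I x_{si})=x_{sc}(\beta_c j_2+\beta_I\kappa_S)/j_2 = j_1\mathcal{R}_{0b}^{T1}x_{sc}$, with $\mathcal{R}_{0b}^{T1}$ as in~\eqref{eqn:R0_main}.

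\textbf{Carrier equation.} The second equation becomes
\[
x_{sc}\bigl(j_1\mathcal{R}_{0b}^{T1}x_{ss}-j_1\bigr)=0 .
\]
So either $x_{sc}=0$, which forces $x_{si}=x_{sr}=0$ and $x_{ss}=1$ (the DFE $x_{ab}^{0}$), or $x_{ss}^{vf}=1/\mathcal{R}_{0b}^{T1}$.

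\textbf{Endemic branch.} In the second case the normalisation gives
\[
x_{sc}^{vf}\Bigl(1+\frac{\kappa_S}{j_2}+\frac{\gamma_b\kappa_S+\theta j_2}{j_2j_3}\Bigr)=1-\frac{1}{\mathcal{R}_{0b}^{T1}},
\]
which determines $x_{sc}^{vf}$ uniquely, and hence $x_{si}^{vf}$ and $x_{sr}^{vf}$. These are the expressions to be recorded as~\eqref{eqn:QVFE}--\eqref{eqn:Virus-free fixed}. The bracket is strictly positive, so $x_{sc}^{vf}>0$ if and only if $\mathcal{R}_{0b}^{T1}>1$. All other components then inherit positivity from their linear relations. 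For $\mathcal{R}_{0b}^{T1}\le 1$ the branch gives $x_{sc}\le 0$, and $x_{ss}=1/\mathcal{R}_{0b}^{T1}\ge 1$ is incompatible with nonnegative remaining components unless all vanish. Hence only $x_{ab}^{0}$ remains, and at $\mathcal{R}_{0b}^{T1}=1$ the two branches coincide.

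\textbf{Consistency check.} The main point to verify is that the first equation is automatically satisfied, i.e.\ that we have not over- or under-determined the system. Summing the four equations yields $\mu(1-\sum x)=0$, so the first equation is implied by the others plus the normalisation. We will also check directly that $\mu(1-x_{ss})=\xi_B x_{ss}-\iota_b x_{sr}$ holds. This uses the identity
\[
j_1 x_{sc}=\mu x_{sc}+\mu x_{si}+\mu x_{sr}+\iota_b x_{sr},
\]
which follows from the linear relations and the definitions of $j_1$, $j_2$, $j_3$.

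Uniqueness is then immediate because every step was linear once $x_{ss}$ was fixed.
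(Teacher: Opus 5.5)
Your argument is correct and follows the paper's own proof, with more of the details written out. Both split on $x_{sc}=0$ versus $x_{sc}>0$, read $x_{ss}^{vf}=1/\mathcal{R}_{0b}^{T1}$ off the carrier equation, and recover the remaining components from the $x_{si}$ and $x_{sr}$ equations plus the normalisation. Your check that summing the equations gives $\mu(1-\sum x)=0$, so the $x_{ss}$ equation holds automatically, is a step the paper omits.

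One correction: do not simply assert that your expressions are those recorded in \eqref{eqn:Virus-free fixed}. Your normalisation, simplified with the identity $j_2j_3+\kappa_Sj_3+\gamma_b\kappa_S+\theta j_2=j_1j_2+\iota_b(j_2+\kappa_S)$, gives
\[
x_{sc}^{vf}=\frac{(\mu+\iota_b)\,j_2}{j_1j_2+\iota_b(j_2+\kappa_S)}\Bigl(1-\frac{1}{\mathcal{R}_{0b}^{T1}}\Bigr).
\]
The printed formula has $j_2$ and $j_1j_2+\iota_b(j_2+\kappa_S)$ swapped between numerator and denominator. As printed it is not dimensionless and does not satisfy $x_{ss}+x_{sc}+x_{si}+x_{sr}=1$.

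Your $x_{ss}^{vf}$, $x_{si}^{vf}$ and $x_{sr}^{vf}$ do agree with the paper. So write $x_{sc}^{vf}$ out explicitly, and note that the lemma's ``given by'' holds only with this corrected expression.
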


\begin{proof}
See Appendix \ref{app:proof_QVFE_existence}.
\end{proof}

The unique QVFE is
\begin{equation}
  x_{sb}^{vf}
  = \bigl[x_{ss}^{vf},\,x_{sc}^{vf},\,x_{si}^{vf},\,x_{sr}^{vf},\,
           0,\,0,\,0,\,0,\,0,\,0,\,0,\,0\bigr],
  \label{eqn:QVFE}
\end{equation}
with non-zero components
\begin{equation}
  \begin{aligned}
    x_{ss}^{vf} &= \frac{1}{\mathcal{R}_{0b}^{T1}},\\[4pt]
    x_{sc}^{vf} &= \frac{(\mu+\iota_b)\bigl(j_1 j_2
                   +\iota_b(j_2+\kappa_S)\bigr)}{j_2}
                   \!\left(1-\frac{1}{\mathcal{R}_{0b}^{T1}}\right),\\[4pt]
    x_{si}^{vf} &= \frac{\kappa_S}{j_2}\,x_{sc}^{vf},\\[4pt]
    x_{sr}^{vf} &= \frac{\gamma_b\kappa_S+\theta j_2}{j_2 j_3}\,x_{sc}^{vf}.
  \end{aligned}
  \label{eqn:Virus-free fixed}
\end{equation}
These are biologically meaningful (positive) if and only if
$\mathcal{R}_{0b}^{T1}>1$.

\paragraph{Block-triangular structure.}
The Jacobian of System~\eqref{eqn:MainModel} evaluated at $x_{sb}^{vf}$ takes the block upper-triangular form
\begin{equation}
  \mathcal{J}^{vf}\!\left(x_{sb}^{vf}\right)
  =
  \begin{pmatrix}
    \mathcal{J}^{vf}_{1(4\times4)} & * & * \\
    \mathbf{0} & \mathcal{J}^{vf}_{2(4\times4)} & \mathbf{0} \\
    \mathbf{0} & * & \mathcal{J}^{vf}_{3(4\times4)}
  \end{pmatrix},
  \label{eqn:VirusFreeMatrix}
\end{equation}
where $*$ denotes non-zero off-diagonal blocks. By the same block-triangular spectral argument as in Section \ref{subsec:BF_QDFE} \cite{horn2012matrix}, the stability of $x_{sb}^{vf}$ reduces to the stability of the three diagonal blocks $\mathcal{J}^{vf}_{1}$, $\mathcal{J}^{vf}_{2}$, and $\mathcal{J}^{vf}_{3}$. Their explicit forms are
\begin{equation}
  \mathcal{J}^{vf}_{1} =
  \begin{pmatrix}
    -(x_{si}^{vf}\beta_I+x_{sc}^{vf}\beta_c+\mu) &
    -x_{ss}^{vf}\beta_c & -x_{ss}^{vf}\beta_I & \iota_b \\[2pt]
    x_{si}^{vf}\beta_I+x_{sc}^{vf}\beta_c &
    x_{ss}^{vf}\beta_c-j_1 & x_{ss}^{vf}\beta_I & 0 \\[2pt]
    0 & \kappa_S & -j_2 & 0 \\[2pt]
    0 & \theta   & \gamma_b & -j_3
  \end{pmatrix},
  \label{eqn:J_vf_1}
\end{equation}
\begin{equation}
  \mathcal{J}^{vf}_{2} =
  \begin{pmatrix}
    x_{ss}^{vf}\beta - x_{si}^{vf}\beta_I - x_{sc}^{vf}\beta_c - j_4 &
    x_{ss}^{vf}\beta & x_{ss}^{vf}\beta & x_{ss}^{vf}\beta+\iota_b \\[2pt]
    x_{sc}^{vf}\beta+x_{si}^{vf}\beta_I+x_{sc}^{vf}\beta_c &
    x_{sc}^{vf}\beta-j_5 & x_{sc}^{vf}\beta & x_{sc}^{vf}\beta \\[2pt]
    x_{si}^{vf}\beta & x_{si}^{vf}\beta+\kappa_I &
    x_{si}^{vf}\beta-j_6 & x_{si}^{vf}\beta \\[2pt]
    x_{sr}^{vf}\beta & x_{sr}^{vf}\beta+\theta &
    x_{sr}^{vf}\beta+\gamma_b & x_{sr}^{vf}\beta-j_7
  \end{pmatrix},
  \label{eqn:J_vf_2}
\end{equation}
\begin{equation}
  \mathcal{J}^{vf}_{3} =
  \begin{pmatrix}
    -(x_{si}^{vf}\beta_I+x_{sc}^{vf}\beta_c+j_8) &
    0 & 0 & \iota_b \\[2pt]
    x_{si}^{vf}\beta_I+x_{sc}^{vf}\beta_c & -j_9 & 0 & 0 \\[2pt]
    0 & \kappa_R & -j_{10} & 0 \\[2pt]
    0 & \theta   & \gamma_b & -j_{11}
  \end{pmatrix}.
  \label{eqn:J_vf_3}
\end{equation}

\subsubsection{Bifurcation analysis at $\mathcal{R}_{0b}^{T1}=1$}
\label{subsubsec:QVFE_Bifurcation}

The stability of $\mathcal{J}^{vf}_{1}$ is equivalent to the stability of the reduced bacterial System~\eqref{eqn:ReducedModel_NoVirus} at $x_{sb}^{vf}$, and is analyzed by the approach of \cite{castillo2004dynamical}. The computation, which follows the same steps as the proof of Theorem \ref{thm:ReducedModel_NoBacterial_4ward} (see Appendix \ref{app:proof_QVFE_bifurcation}), yields the directional coefficients
\begin{equation}
  a = \frac{
    (\gamma_b\iota_b + \gamma_b\kappa_S + \iota_b\kappa_S
     + \gamma_b\mu + \iota_b\mu + \kappa_S\mu + \mu^2
     + \gamma_b\theta + \mu\theta)(\gamma_b+\mu)(\kappa_S+\mu+\theta)
  }{
    (\beta_c - \kappa_S - \mu - \theta)(\iota_b+\mu)\,\kappa_S
  },
  \qquad b = 1 > 0.
  \label{eqn:QVFE_ab}
\end{equation}
The sign of $a$ depends critically on the denominator: $a>0$ if and only if $\beta_c > \kappa_S+\mu+\theta = j_1$, which is precisely the condition $\beta_c > j_1$.

\begin{theorem}[Bifurcation type at $\mathcal{R}_{0b}^{T1}=1$]
\label{thm:ReducedModel_NoVirus_Backward}
Consider System~\eqref{eqn:MainModel} restricted to the virus-free subspace ($x_{Ib}=0$), i.e.\ System~\eqref{eqn:ReducedModel_NoVirus}.
\begin{enumerate}[\roman*]
  \item If $\beta_c > \kappa_S+\mu+\theta$, then $a>0$ and the bifurcation at $\mathcal{R}_{0b}^{T1}=1$ is \textbf{backward} (subcritical): a stable endemic equilibrium coexists with the stable DFE for $\mathcal{R}_{0b}^{T1}$ slightly below~$1$, so the disease-free state is not globally stable even when $\mathcal{R}_{0b}^{T1}<1$.
  \item If $\beta_c \leq \kappa_S+\mu+\theta$, then $a\leq 0$ and the bifurcation is \textbf{forward} (supercritical): the unique endemic equilibrium $x_{sb}^{vf}$ is locally asymptotically stable whenever $\mathcal{R}_{0b}^{T1}>1$.
\end{enumerate}
\end{theorem}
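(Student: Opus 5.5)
The plan is to apply Theorem~4.1 of \cite{castillo2004dynamical} to System~\eqref{eqn:ReducedModel_NoVirus}, following the same steps as the proof of Theorem~\ref{thm:ReducedModel_NoBacterial_4ward}. First, I will use the normalization $x_{ss}=1-x_{sc}-x_{si}-x_{sr}$ to eliminate $x_{ss}$, which leaves a three-dimensional system in $(x_{sc},x_{si},x_{sr})$ with the disease-free equilibrium at the origin. For the bifurcation parameter I take $\beta_c$ (equivalently $\beta_I$), with critical value fixed by $\mathcal{R}_{0b}^{T1}=1$ in~\eqref{eqn:R0_main}, namely
\[
\beta_c^{*}=j_1-\frac{\beta_I\kappa_S}{j_2},\qquad\text{equivalently}\qquad \beta_I=\frac{(j_1-\beta_c)j_2}{\kappa_S}.
\]
At this value the Jacobian at the origin is
\[
J_0=\begin{pmatrix}\beta_c-j_1&\beta_I&0\\ \kappa_S&-j_2&0\\ \theta&\gamma_b&-j_3\end{pmatrix}.
\]
Its determinant vanishes exactly when $\mathcal{R}_{0b}^{T1}=1$. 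The remaining two eigenvalues are $-j_3$ and $-(j_1-\beta_c+j_2)$, so zero is a simple eigenvalue and all other eigenvalues have negative real part, as the theorem requires.

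Next I compute the eigenvectors of $J_0$ for the zero eigenvalue.
\begin{itemize}
\item \emph{Right eigenvector:} $w=(w_1,\;\kappa_S w_1/j_2,\;(\theta w_1+\gamma_b\kappa_S w_1/j_2)/j_3)$.
\item \emph{Left eigenvector:} $v=(v_1,\;\beta_I v_1/j_2,\;0)$, since the third column of $J_0$ forces $v_3=0$.
\end{itemize}
The vectors are normalized by $v\cdot w=1$. Only $f_1=(\beta_c x_{sc}+\beta_I x_{si})(1-x_{sc}-x_{si}-x_{sr})-j_1x_{sc}$ has nonzero second derivatives. Hence
\[
a=\sum_{k,i,j} v_k w_i w_j\,\partial^2_{ij}f_k
\]
collapses to $-2v_1(\beta_c w_1+\beta_I w_2)(w_1+w_2+w_3)$. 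The coefficient $b$ comes from $\partial^2 f_1/\partial x_{sc}\partial\beta_c=1$, which gives $b=v_1w_1>0$ after normalization, consistent with~\eqref{eqn:QVFE_ab}.

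The final step is to rewrite $a$ entirely in the model parameters. I substitute $\beta_I=(j_1-\beta_c)j_2/\kappa_S$ at criticality and impose the normalization $v\cdot w=1$. This should produce the rational expression~\eqref{eqn:QVFE_ab}, whose only sign-indefinite factor is $\beta_c-j_1$. Case (i) or (ii) then follows directly from the Castillo-Chavez--Song dichotomy. In case (ii), local asymptotic stability of $x_{sb}^{vf}$ for $\mathcal{R}_{0b}^{T1}>1$ follows from the forward (transcritical) branch near threshold, together with uniqueness from Lemma~\ref{lem:Unique_QVFE}.

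\textbf{Main obstacle.} I expect the hardest part to be the sign bookkeeping in this last step, and I would check it first. The factor $\beta_c-j_1$ enters only through the normalization: $v_2$ is proportional to $j_1-\beta_c$. The Castillo-Chavez--Song theorem also requires the relevant components of $w$ (and the sign conventions on $v$) to be admissible. Note that $\beta_I\ge0$ at criticality forces $\beta_c\le j_1$. So before asserting case (i), I would verify that the regime $\beta_c>j_1$ is attained consistently, for instance by taking $\beta_I$ rather than $\beta_c$ as the bifurcation parameter, and that the sign of $a$ really flips there rather than merely reflecting a sign change in the eigenvector normalization. If it does not, the computation should be redone in the full four-dimensional form, keeping the $x_{ss}$ equation.
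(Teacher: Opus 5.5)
Your setup is correct, and in one respect more careful than the paper's. You use the true critical value $\beta_c^*=j_1-\beta_I\kappa_S/j_2$. The paper's proof takes $\beta_c^*=j_1$, where the Jacobian has no zero eigenvalue unless $\beta_I\kappa_S=0$, and in that case the denominator of \eqref{eqn:QVFE_ab} vanishes. Your eigenvectors and your reduction of $a$ are also right.

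The step that fails is the last one. You expect $a$ to become \eqref{eqn:QVFE_ab} with a sign-indefinite factor $\beta_c-j_1$, but your own formula already fixes the sign. At the threshold, $\beta_c w_1+\beta_I w_2=(\beta_c+\beta_I\kappa_S/j_2)w_1=j_1w_1$. The normalization $v\cdot w=v_1w_1(1+\beta_I\kappa_S/j_2^2)=1$ gives $v_1>0$ once $w_1>0$. Hence
\[
a=-2j_1\,v_1w_1\,(w_1+w_2+w_3)<0
\]
for every admissible parameter set. Your ``main obstacle'' is therefore the answer, not a bookkeeping issue. The factor $\beta_c-j_1$ in \eqref{eqn:QVFE_ab} equals $-\beta_I\kappa_S/j_2\le 0$ at the threshold, so that expression is negative too. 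The paper reaches case (i) only by treating $\beta_c$ as free there.

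Neither of your fallbacks rescues it. With $\beta_I$ as the bifurcation parameter, the requirement $\beta_I^*=(j_1-\beta_c)j_2/\kappa_S\ge 0$ is again $\beta_c\le j_1$. Keeping $x_{ss}$ (the four-dimensional form) gives $v_1=v_4=0$, $v_3=\beta_I v_2/j_2$, and $w_1=-Nw_2/(j_2j_3)$. Here $N$ is the first factor in the numerator of \eqref{eqn:QVFE_ab}, and again $a=2j_1v_2w_1w_2<0$.

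The obstruction is structural, not a matter of coordinates, and part (i) is false:
\begin{enumerate}
\item Since $\mathcal{R}_{0b}^{T1}=\beta_c/j_1+\beta_I\kappa_S/(j_1j_2)\ge\beta_c/j_1$, the hypothesis $\beta_c>j_1$ forces $\mathcal{R}_{0b}^{T1}>1$. So it never meets the threshold; the paper uses exactly this in the proof of Lemma~\ref{lem:DFE_LAS}.
\item Lemma~\ref{lem:Unique_QVFE} excludes endemic equilibria for $\mathcal{R}_{0b}^{T1}\le 1$.
\item For $\mathcal{R}_{0b}^{T1}<1$, the bound $x_{ss}\le 1$ makes $(x_{sc},x_{si})$ a subsolution of the cooperative linear system with matrix $\begin{pmatrix}\beta_c-j_1&\beta_I\\ \kappa_S&-j_2\end{pmatrix}$. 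That matrix has negative trace and determinant $j_1j_2(1-\mathcal{R}_{0b}^{T1})>0$, so the DFE attracts every solution on the simplex.
\end{enumerate}
No choice of bifurcation parameter can prove part (i): its hypothesis is incompatible with $\mathcal{R}_{0b}^{T1}=1$ and its conclusion is false. What your computation does prove is that the virus-free subsystem always bifurcates forward.

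Separately, your argument for part (ii) is incomplete. Theorem~4.1 of \cite{castillo2004dynamical} gives stability of the endemic branch only for $\mathcal{R}_{0b}^{T1}$ slightly above $1$. Uniqueness of $x_{sb}^{vf}$ does not extend this to all $\mathcal{R}_{0b}^{T1}>1$, since it does not exclude, for example, a Hopf bifurcation along the branch. That claim needs an independent argument, such as Routh--Hurwitz for the Jacobian at $x_{sb}^{vf}$ or a Lyapunov function.
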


\begin{proof}
Follows directly from Theorem~4.1 of \cite{castillo2004dynamical} applied to~\eqref{eqn:QVFE_ab}, with $b=1>0$ in both cases. The full eigenvector calculation is given in Appendix \ref{app:proof_QVFE_bifurcation}.
\end{proof}

\begin{remark}[Epidemiological significance of the backward bifurcation]
\label{rem:backward_bif}
The backward bifurcation condition $\beta_c>\kappa_S+\mu+\theta$ has a clear biological meaning: the carrier transmission rate $\beta_c$ exceeds the total per-capita rate at which carriers leave the infectious state (via progression $\kappa_S$, natural death $\mu$, or spontaneous clearance $\theta$). When this holds, a carrier is, on average, re-seeding more than one new carrier before leaving the infectious class, creating a self-sustaining carrier reservoir that persists even when $\mathcal{R}_{0b}^{T1}<1$. The public-health implication is that reducing $\mathcal{R}_{0b}^{T1}$ below~$1$ is necessary but \emph{not sufficient} to eliminate the bacterial disease; the carrier transmission rate $\beta_c$ must independently be reduced below $\kappa_S+\mu+\theta$.
\end{remark}


\begin{figure}[pos=ht] 
  \centering
\includegraphics[width=.85\linewidth]{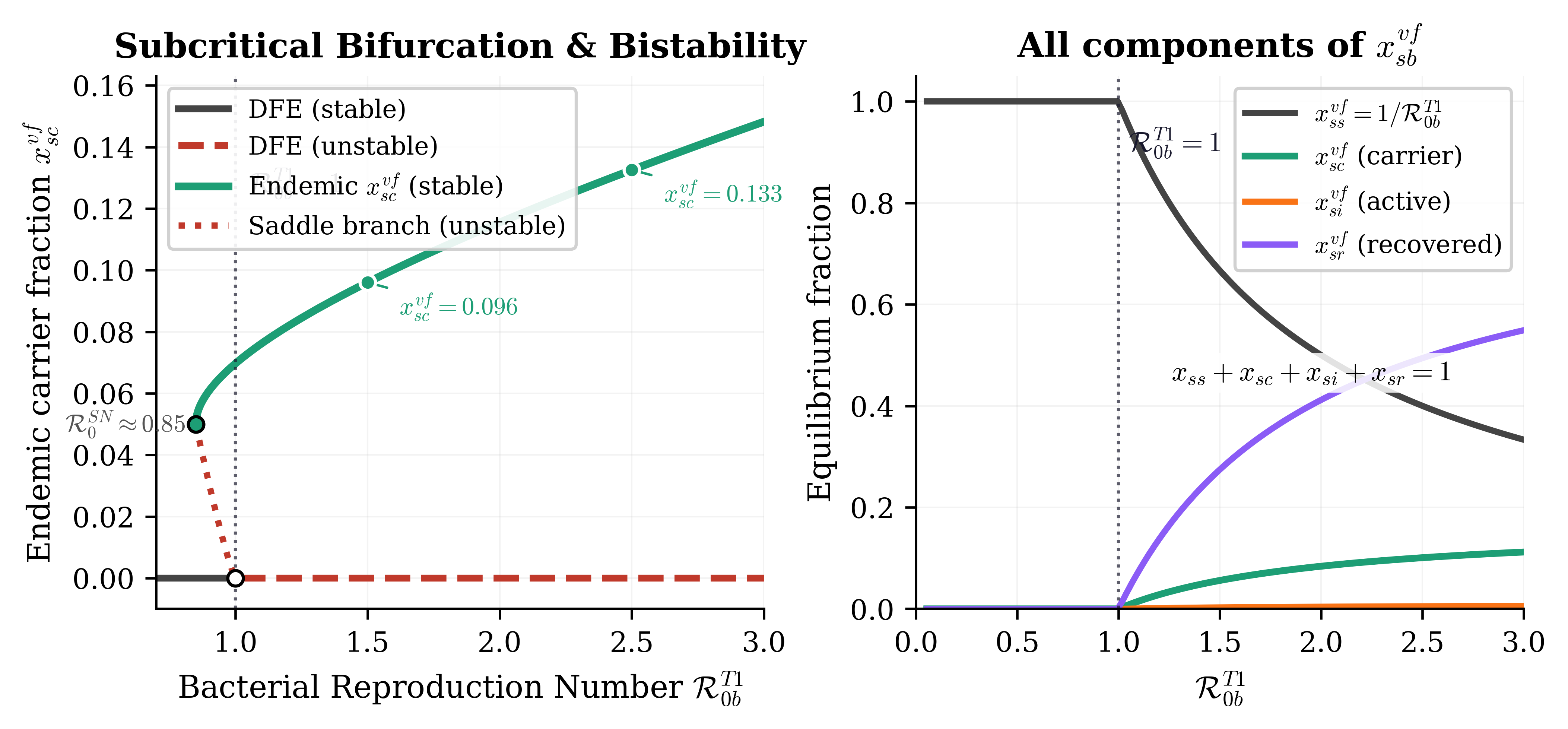} 
  \caption{Backward bifurcation of the bacterial-only subsystem when $\beta_c>\kappa_S+\mu+\theta$. (Left) Endemic carrier fraction $x_{sc}^{vf}$ vs.\ $\mathcal{R}_{0b}^{T1}$: stable (solid) and unstable (dashed) branches meet at the transcritical point ($\mathcal{R}_{0b}^{T1}=1$) and the endemic branch terminates at a saddle-node ($\mathcal{R}_0^{SN}\approx0.85$), giving a bistability window $0.85<\mathcal{R}_{0b}^{T1}<1$. (Right) Corresponding equilibrium fractions, satisfying $x_{ss}+x_{sc}+x_{si}+x_{sr}=1$. Parameters at baseline values (Table~\ref{tab:parameter_justification}), with $\theta=0.02$ and $\beta_I=0.025$.}
\label{fig:QVFE_bifurcation}
\end{figure}


Figure \ref{fig:QVFE_bifurcation} maps out the structural equilibrium state spaces of the isolated bacterial model subsystem under parameter regimes satisfying the subcritical inequality \(\beta_c > \kappa_S + \mu + \theta\), revealing a prominent backward bifurcation and an associated window of bistability within the localized transmission boundary of \(0.85 < \mathcal{R}_{0b}^{T1} < 1.0\). Inside this critical interval, a stable disease-free equilibrium coexists alongside a stable upper endemic equilibrium branch, separated by a continuous, backward-bending unstable saddle curve that collides cleanly with the axis floor at \(\mathcal{R}_{0b}^{T1} = 1.0\). Epidemiologically, this structure indicates that driving the basic reproduction number below unity is entirely insufficient to eradicate the pathogen, as a transient influx of carriers can push population densities above the saddle line into a self-sustaining endemic loop; absolute eradication instead requires suppressing transmission completely beneath the saddle-node threshold (\(\mathcal{R}_0^{SN} \approx 0.85\)) where the endemic branch collapses. Across this same transmission spectrum, the stratified component profiles show that the uninfected susceptible fraction decays steadily along a \(1/\mathcal{R}_{0b}^{T1}\) trajectory, acting as a recruitment channel that expands the stable asymptomatic carrier, active invasive, and recovered immune sub-populations while maintaining precise fractional conservation (\(\sum x_{sb}^{vf} = 1.0\)) across the entire continuum.

\subsubsection{Viral invasion of the bacterial-endemic state}
\label{subsubsec:VirusEmpire}

The central block $\mathcal{J}^{vf}_{2(4\times4)}$ governs the linearized dynamics of the four viral compartments $(x_{is},\,x_{ic},\,x_{ii},\,x_{ir})$ in a host population whose bacterial component is fixed at the endemic equilibrium $x_{sb}^{vf}$. As in Section \ref{subsec:BacteriaEmpire}, we apply the next-generation method \cite{van2002reproduction} to the sub-system consisting of all classes capable of transmitting the virus in the bacteria-endemic population: 

\begin{equation}
  \begin{aligned}
    x_{is}'(t) &= \xi_V\,x_{ss} - \xi_B\,x_{is}
                  + \iota_b\,x_{ir} - j_4\,x_{is},\\
    x_{ic}'(t) &= \xi_V\,x_{sc} + \xi_B\,x_{is} - j_5\,x_{ic},\\
    x_{ii}'(t) &= \xi_V\,x_{si} + \kappa_I\,x_{ic} - j_6\,x_{ii},\\
    x_{ir}'(t) &= \xi_V\,x_{sr} + \gamma_b\,x_{ii}
                  + \theta\,x_{ic} - j_7\,x_{ir}.
  \end{aligned}
  \label{eqn:ReducedModel_NoVirus2}
\end{equation}
Decomposing the right-hand side into new-infection terms
$\mathcal{F}_2$ and transition terms $\mathcal{V}_2$,
\begin{equation}
  \mathcal{F}_2 =
  \begin{pmatrix}
    \xi_V\,x_{ss}\\
    \xi_V\,x_{sc}\\
    \xi_V\,x_{si}\\
    \xi_V\,x_{sr}
  \end{pmatrix},
  \qquad
  \mathcal{V}_2 =
  \begin{pmatrix}
    \xi_B\,x_{is}-\iota_b\,x_{ir}+j_4\,x_{is}\\
     -\xi_B\,x_{is}+j_5\,x_{ic}\\
     -\kappa_I\,x_{ic}+j_6\,x_{ii}\\
     -\gamma_b\,x_{ii}-\theta\,x_{ic}+j_7\,x_{ir}
  \end{pmatrix},
  \label{eqn:ReducedModel_NoViruses_2_Matrices}
\end{equation}
and linearizing about $x_{sb}^{vf}$, we obtain the next-generation matrix $F_2 V_2^{-1}$ (see Appendix \ref{ssst:Coinfected matrices}
for $F_2$ and $V_2$).

\paragraph{The viral invasion reproduction number $\mathbf{Inv}^{\mathbf{V}}$.}
We define the \emph{viral invasion reproduction number} as the spectral
radius of $F_2 V_2^{-1}$:
\begin{equation}
  \mathbf{Inv}^{\mathbf{V}}
  \;=\; \rho(F_2 V_2^{-1})
  \;=\; x_{ss}^{vf}\,\mathcal{R}_{0a}^{V1}
       + x_{sc}^{vf}\,\mathcal{R}_{0a}^{V2}
       + x_{si}^{vf}\,\mathcal{R}_{0a}^{V3}
       + x_{sr}^{vf}\,\mathcal{R}_{0a}^{V4},
  \label{eqn:VirusInvasion}
\end{equation}
where the component viral reproduction numbers $\mathcal{R}_{0a}^{V1},\ldots,\mathcal{R}_{0a}^{V4}$ are given in Appendix \ref{app:RepNum} (equations~\eqref{eqn:R0aV1}--\eqref{eqn:R0aV4}). The stability of $\mathcal{J}^{vf}_{2}$ is then equivalent to $\mathbf{Inv}^{\mathbf{V}}<1$:
\begin{itemize}
  \item if $\mathbf{Inv}^{\mathbf{V}}<1$, all eigenvalues of
        $\mathcal{J}^{vf}_{2}$ have strictly negative real part and
        the virus cannot invade the bacterial-endemic state;
  \item if $\mathbf{Inv}^{\mathbf{V}}>1$, $\mathcal{J}^{vf}_{2}$ has
        at least one eigenvalue with positive real part and viral
        invasion occurs.
\end{itemize}

\paragraph{Biological interpretation of $\mathbf{Inv}^{\mathbf{V}}$.}
$\mathbf{Inv}^{\mathbf{V}}$ is the average number of new viral infections started by one infectious individual entering a population where pneumococcus is already endemic. As with $\mathbf{Inv}^{\mathbf{B}}$, it splits into weighted terms, one for each host group present at the QVFE: bacterially susceptible, carrier, actively infected, and recovered ($x_{ss}^{vf},x_{sc}^{vf},x_{si}^{vf},x_{sr}^{vf}$). The susceptible group behaves as an ordinary host, so $\mathcal{R}_{0a}^{V1}$ applies directly; the other three groups carry modified viral reproduction numbers ($\mathcal{R}_{0a}^{V2},\mathcal{R}_{0a}^{V3},\mathcal{R}_{0a}^{V4}$) reflecting how carriage, active bacterial disease, and bacterial recovery each alter a host's exposure to and susceptibility for influenza. This gives $\mathbf{Inv}^{\mathbf{V}}$ the same weighted-average structure as $\mathbf{Inv}^{\mathbf{B}}$, but with the roles of the two diseases reversed. The key asymmetry between them is that the bacterial disease can undergo a backward bifurcation (Theorem~\ref{thm:ReducedModel_NoVirus_Backward}) while the viral disease cannot (Theorem~\ref{thm:ReducedModel_NoBacterial_4ward}): the bacterial-endemic state $x_{sb}^{vf}$ can persist even when $\mathcal{R}_{0b}^{T1}<1$, so $\mathbf{Inv}^{\mathbf{V}}<1$ is necessary but not always sufficient to keep the virus out.

\begin{lemma}[Local asymptotic stability of the QVFE] \label{lem:QVFE_stability} If $\mathcal{R}_{0b}^{T1}>1$ and $\mathbf{Inv}^{\mathbf{V}}<1$, then the equilibrium $x_{sb}^{vf}$ is locally asymptotically stable with respect to System~\eqref{eqn:MainModel}. If $\mathbf{Inv}^{\mathbf{V}}>1$, then $x_{sb}^{vf}$ is unstable.
\end{lemma}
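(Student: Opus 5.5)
The proof will rest on the block-triangular structure~\eqref{eqn:VirusFreeMatrix}. The spectrum of $\mathcal{J}^{vf}(x_{sb}^{vf})$ is the union of the spectra of $\mathcal{J}^{vf}_{1}$, $\mathcal{J}^{vf}_{2}$ and $\mathcal{J}^{vf}_{3}$, so local asymptotic stability follows once all three diagonal blocks are shown to be Hurwitz. Instability follows as soon as any one block has an eigenvalue with positive real part. I would treat the three blocks in turn, in the same order as the QBFE analysis of Section~\ref{subsec:BF_QDFE}.

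\emph{Block $\mathcal{J}^{vf}_{2}$ (viral invasion).} Write $\mathcal{J}^{vf}_{2}=F_2-V_2$ with $F_2,V_2$ from~\eqref{eqn:ReducedModel_NoViruses_2_Matrices} linearized at $x_{sb}^{vf}$. Here $F_2\ge 0$ entrywise, since its entries are the $x_{sb}^{vf}\beta$ terms. The matrix $V_2$ is a nonsingular M-matrix: it has positive diagonal entries $x_{si}^{vf}\beta_I+x_{sc}^{vf}\beta_c+j_4$, $j_5$, $j_6$, $j_7$ and nonpositive off-diagonal entries, and it is column diagonally dominant, because each $j_k$ exceeds the outflows to other viral classes by at least $\mu>0$. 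By the van den Driessche--Watmough lemma \cite{van2002reproduction}, $s(F_2-V_2)<0$ if and only if $\rho(F_2V_2^{-1})<1$, and $s(F_2-V_2)>0$ if and only if $\rho(F_2V_2^{-1})>1$. Since $F_2$ has rank one, with all rows proportional to $(1,1,1,1)$ scaled by $x_{sb}^{vf}\beta$, the spectral radius equals the trace of the rank-one matrix, which gives~\eqref{eqn:VirusInvasion}. This settles both the stability part and the instability claim ``$\mathbf{Inv}^{\mathbf{V}}>1\Rightarrow$ unstable'' without any further hypothesis.

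\emph{Block $\mathcal{J}^{vf}_{3}$.} Here I would mimic Lemma~\ref{lem:J3_stability}. The matrix~\eqref{eqn:J_vf_3} is Metzler (off-diagonal entries $\ge 0$). Its column sums are $-\mu-\iota$ in the first column, $-\mu-\iota$ in the second, $-\iota-\mu$ in the third and $\iota_b-j_{11}=-\iota-\mu$ in the fourth, all strictly negative. A Metzler matrix that is strictly column diagonally dominant in this sense is Hurwitz; equivalently, $\mathbf{1}^{\top}$ is a linear copositive Lyapunov vector. This avoids a $4\times4$ Routh--Hurwitz computation.

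\emph{Block $\mathcal{J}^{vf}_{1}$.} This is the main obstacle: it is the Jacobian of the bacterial-only system~\eqref{eqn:ReducedModel_NoVirus} at its endemic point, and it is not Metzler. I would first reduce it using the conservation $x_{ss}+x_{sc}+x_{si}+x_{sr}=1$, which gives an eigenvalue $-\mu$ with left eigenvector $\mathbf{1}^{\top}$. The remaining spectrum is then that of a $3\times3$ matrix in $(x_{sc},x_{si},x_{sr})$ after substituting $x_{ss}=1-x_{sc}-x_{si}-x_{sr}$. For this $3\times3$ matrix I would compute the characteristic coefficients, use the equilibrium relation $x_{ss}^{vf}(\beta_c+\beta_I\kappa_S/j_2)=j_1$ to eliminate $\beta$-terms, and verify positivity of the coefficients and the Routh--Hurwitz inequality when $\mathcal{R}_{0b}^{T1}>1$. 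Where the direct check becomes intractable, I would fall back on Theorem~\ref{thm:ReducedModel_NoVirus_Backward}(ii). That result gives stability of $x_{sb}^{vf}$ in the forward regime; in the backward regime the hypothesis $\mathcal{R}_{0b}^{T1}>1$ places us on the branch that emanates from the transcritical point as the stable (upper) branch, so local stability of $\mathcal{J}^{vf}_{1}$ near threshold is inherited from the center-manifold result. Combining the three blocks yields the lemma.
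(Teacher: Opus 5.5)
Your treatment of $\mathcal{J}^{vf}_2$ is the same next-generation argument the paper uses. Your column-sum argument for $\mathcal{J}^{vf}_3$ is sounder than the paper's appendix, which calls that matrix lower triangular and so overlooks the $\iota_b$ entry in position $(1,4)$. Your $V_2$ carries $\xi_B^{vf}=\beta_c x_{sc}^{vf}+\beta_I x_{si}^{vf}$ in the $(1,1)$ and $(2,1)$ entries, unlike the appendix version, and it is the right linearization. The trace identification with \eqref{eqn:VirusInvasion} still holds, because every column of either matrix sums to $j_4$, so $\mathbf{1}^{\top}V_2^{-1}=\mathbf{1}^{\top}/j_4$.

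The gap is $\mathcal{J}^{vf}_1$: the Routh--Hurwitz check is only announced, and the fallback you offer does not work.
\begin{itemize}
\item Theorem~\ref{thm:ReducedModel_NoVirus_Backward} rests on the Castillo-Chavez--Song center-manifold theorem, which is local in the parameter. It describes equilibria only for $\mathcal{R}_{0b}^{T1}$ near $1$, so it cannot give stability for every $\mathcal{R}_{0b}^{T1}>1$, which is what the lemma claims. The paper's one-line appeal to part (ii) has the same limitation, and it also needs $\beta_c\le j_1$, which the lemma does not assume.
\item Your backward-regime step misreads that theorem. For $a>0$, $b>0$, the branch through the transcritical point is positive and \emph{unstable} for $\mathcal{R}_{0b}^{T1}<1$, and negative for $\mathcal{R}_{0b}^{T1}>1$. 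A stable upper branch would be born at a saddle-node and inherits nothing from the center manifold.
\item In this subsystem that picture does not even arise. The $x_{sc}$- and $x_{si}$-equations force $x_{ss}^{vf}=1/\mathcal{R}_{0b}^{T1}$, so there is exactly one endemic equilibrium for $\mathcal{R}_{0b}^{T1}>1$ and none below (Lemma~\ref{lem:Unique_QVFE}). Moreover, $\beta_c>j_1$ already forces $\mathcal{R}_{0b}^{T1}\ge\beta_c/j_1>1$, while at $\mathcal{R}_{0b}^{T1}=1$ one has $\beta_c-j_1=-\beta_I\kappa_S/j_2\le 0$.
\end{itemize}

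Drop the fallback and do the direct computation, which is short. Put $S=x_{ss}^{vf}$, $\Phi=\beta_c x_{sc}^{vf}+\beta_I x_{si}^{vf}>0$, $A=\beta_c S-j_1$ and $B=\beta_I S$. The equilibrium relation you quote is $Aj_2+B\kappa_S=0$, so $A\le 0$. After removing the eigenvalue $-\mu$, the reduced matrix in $(x_{sc},x_{si},x_{sr})$ is
\[
\begin{pmatrix} A-\Phi & B-\Phi & -\Phi\\ \kappa_S & -j_2 & 0\\ \theta & \gamma_b & -j_3 \end{pmatrix}.
\]
Using $Aj_2+B\kappa_S=0$, its characteristic polynomial $\lambda^3+a_1\lambda^2+a_2\lambda+a_3$ has
\[
a_1=j_2+j_3-A+\Phi,\qquad a_2=\Phi(j_2+\kappa_S+\theta)+j_3(j_2-A+\Phi),\qquad a_3=\Phi\bigl(j_2j_3+\kappa_S j_3+\theta j_2+\kappa_S\gamma_b\bigr).
\]
All three coefficients are positive. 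Moreover $a_1a_2>(j_2+j_3)(j_2+j_3+\kappa_S+\theta)\,\Phi\ge a_3$, where the last step uses $j_2\kappa_S\ge\gamma_b\kappa_S$. Hence $\mathcal{J}^{vf}_1$ is Hurwitz for every parameter set with $\mathcal{R}_{0b}^{T1}>1$, with no condition on the sign of $\beta_c-j_1$. With this step supplied, your argument proves the lemma in full, which the paper's reliance on Theorem~\ref{thm:ReducedModel_NoVirus_Backward}(ii) does not.
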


\begin{proof}
The block upper-triangular structure of~\eqref{eqn:VirusFreeMatrix}
reduces the stability question to the stability of $\mathcal{J}^{vf}_{1}$, $\mathcal{J}^{vf}_{2}$, and $\mathcal{J}^{vf}_{3}$ separately. Stability of $\mathcal{J}^{vf}_{1}$ under $\mathcal{R}_{0b}^{T1}>1$ follows from Lemma \ref{lem:Unique_QVFE} and  Theorem \ref{thm:ReducedModel_NoVirus_Backward}(ii). Stability of $\mathcal{J}^{vf}_{2}$ is equivalent to $\mathbf{Inv}^{\mathbf{V}}<1$ by the next-generation argument above. Stability of $\mathcal{J}^{vf}_{3}$ is proved in Appendix \ref{app:proof_QVFE_J3_stability}.
\end{proof}

The long-term outcomes for the QVFE are summarized in Table \ref{tab:QVFE_Stability}.

\begin{table}[!ht]
  \centering
  \caption{Stability of the QVFE $x_{sb}^{vf}$ and long-term outcomes.}
  \label{tab:QVFE_Stability}
  \begin{tabular}{@{}lll@{}}
    \hline
    \textbf{Region} & \textbf{Conditions} & \textbf{Long-term outcome} \\
    \hline
    R5 & $\mathcal{R}_{0b}^{T1}>1$,\; $\mathbf{Inv}^{\mathbf{V}}<1$
       & QVFE $x_{sb}^{vf}$ stable; virus-free \\
    R6 & $\mathcal{R}_{0b}^{T1}>1$,\; $\mathbf{Inv}^{\mathbf{V}}>1$
       & QVFE $x_{sb}^{vf}$ unstable; viral invasion \\
    R7 & $\mathcal{R}_{0b}^{T1}>1$,\; $\mathbf{Inv}^{\mathbf{V}}=1$
       & Bifurcation threshold; higher-order analysis required \\
    R8 & $\mathcal{R}_{0b}^{T1}\leq 1$,\; $\beta_c>\kappa_S+\mu+\theta$
       & Backward bifurcation: endemic state may persist
         despite $\mathcal{R}_{0b}^{T1}<1$ \\
    \hline
  \end{tabular}
\end{table}

\section{Coexistence Endemic Equilibrium and Local Stability}
\label{sec:CEE}

We now study the state in which \emph{both} diseases persist simultaneously — the coexistence endemic equilibrium (CEE). The analysis proceeds in two steps. First we show the CEE exists and can be written in closed form. Then we determine whether, near the epidemic threshold $\mathcal{R}_0=1$, the system undergoes a forward (supercritical) or backward (subcritical) bifurcation, since the direction determines whether reducing $\mathcal{R}_0$ below~$1$ is sufficient to eliminate co-infection.

\subsection{Existence of the CEE}
\label{subsec:CEE_existence}

Let
\begin{equation}
  x_{ab}^{*}
  = \bigl[x_{ss}^{*},\,x_{sc}^{*},\,x_{si}^{*},\,x_{sr}^{*},\,
           x_{is}^{*},\,x_{ic}^{*},\,x_{ii}^{*},\,x_{ir}^{*},\,
           x_{rs}^{*},\,x_{rc}^{*},\,x_{ri}^{*},\,x_{rr}^{*}\bigr]
  \label{eqn:EE}
\end{equation}
denote the CEE of System~\eqref{eqn:MainModel}. The full nonlinearity of the force-of-infection terms prevents a simple explicit formula in general. However, under permanent immunity ($\iota=\iota_b=0$), the equilibrium equations become linear in the endemic forces of infection $\xi_V^{*}$ and $\xi_B^{*}$, yielding the closed-form expressions in Appendix \ref{app:EE}. The local stability analysis in Section \ref{subsec:CEE_bifurcation} applies to the general case $\iota,\iota_b\geq 0$.

\begin{remark}[Continuity in $\iota$, $\iota_b$]
The bifurcation coefficients $a$ and $b$ (Section \ref{subsec:CEE_bifurcation}) are continuous functions of all parameters including $\iota$ and $\iota_b$. Hence the qualitative bifurcation type established at $\iota=\iota_b=0$ persists for sufficiently small immunity-loss rates.
\end{remark}

\subsection{Center manifold setup}
\label{subsec:CEE_setup}

We apply the center manifold theorem of Castillo-Chavez and Song \cite{castillo2004dynamical} (Theorem~4.1) to determine the local dynamics near $\mathcal{R}_0=1$.

\paragraph{Variable relabelling.}
To match the notation required by the theorem, we rename the state variables as shown in Table \ref{tab:variable_relabelling}.

\begin{table}[!ht]
  \centering
  \caption{Variable relabelling for the bifurcation analysis.}
  \label{tab:variable_relabelling}
  \begin{tabular}{@{}llll@{}}
    \hline
    New & Original & New & Original \\
    \hline
    $x_1$  & $x_{ss}$ & $x_7$    & $x_{ii}$ \\
    $x_2$  & $x_{sc}$ & $x_8$    & $x_{ir}$ \\
    $x_3$  & $x_{si}$ & $x_9$    & $x_{rs}$ \\
    $x_4$  & $x_{sr}$ & $x_{10}$ & $x_{rc}$ \\
    $x_5$  & $x_{is}$ & $x_{11}$ & $x_{ri}$ \\
    $x_6$  & $x_{ic}$ & $x_{12}$ & $x_{rr}$ \\
    \hline
  \end{tabular}
\end{table}

Writing $y_i = dx_i/dt$ and $Y=(y_i)^{\top}$, System~\eqref{eqn:MainModel} becomes
\begin{equation}
  \begin{aligned}
    y_1  &= \Lambda - x_1\xi_V - x_1\xi_B
             + \iota\,x_9 + \iota_b\,x_4 - \mu\,x_1,\\
    y_2  &= -x_2\xi_V + x_1\xi_B + \iota\,x_{10} - j_1\,x_2,\\
    y_3  &= -x_3\xi_V + \kappa_S\,x_2 + \iota\,x_{11} - j_2\,x_3,\\
    y_4  &= -x_4\xi_V + \gamma_b\,x_3 + \iota\,x_{12}
             + \theta\,x_2 - j_3\,x_4,\\
    y_5  &= x_1\xi_V - x_5\xi_B + \iota_b\,x_8 - j_4\,x_5,\\
    y_6  &= x_2\xi_V + x_5\xi_B - j_5\,x_6,\\
    y_7  &= x_3\xi_V + \kappa_I\,x_6 - j_6\,x_7,\\
    y_8  &= x_4\xi_V + \gamma_b\,x_7 + \theta\,x_6 - j_7\,x_8,\\
    y_9  &= -x_9\xi_B + \gamma\,x_5 + \iota_b\,x_{12} - j_8\,x_9,\\
    y_{10} &= x_9\xi_B + \gamma\,x_6 - j_9\,x_{10},\\
    y_{11} &= \gamma\,x_7 + \kappa_R\,x_{10} - j_{10}\,x_{11},\\
    y_{12} &= \gamma\,x_8 + \gamma_b\,x_{11} + \theta\,x_{10}
               - j_{11}\,x_{12},
  \end{aligned}
  \label{eqn:MainModel_NewNotation}
\end{equation}
with forces of infection
\begin{equation}
  \xi_V = \frac{\beta}{N}(x_5+x_6+x_7+x_8),
  \qquad
  \xi_B = \frac{\beta_c}{N}(x_2+x_6+x_{10})
          + \frac{\beta_I}{N}(x_3+x_7+x_{11}),
  \label{eqn:force_of_infections_Transformed}
\end{equation}
where $N=\sum_{i=1}^{12}x_i$.

\paragraph{Bifurcation parameter.}
We assume $\mathcal{R}_{0b}^{T1}<\mathcal{R}_{0a}^{V1}$ so that $\mathcal{R}_0 = \mathcal{R}_{0a}^{V1} = \beta/j_4$. Setting $\mathcal{R}_0=1$ gives the critical viral transmission rate
\begin{equation}
  \beta^{*} = j_4 = \gamma+\mu.
  \label{eqn:beta_star}
\end{equation}
Evaluating the Jacobian of~\eqref{eqn:MainModel_NewNotation} at the DFE $x_{ab}^{0}$ with $\beta=\beta^{*}$ yields the $12\times12$matrix
\begin{equation}
  \setcounter{MaxMatrixCols}{12}
  \mathcal{J}_{\mathcal{T}} =
  \begin{pmatrix}
    -\mu  & -\beta_c & -\beta_I & \iota_b   & -\beta^* & -j^*_{12} & -j^*_{13} & -\beta^* & \iota & -\beta_c & -\beta_I & 0        \\
    0     & \beta_c-j_1 & \beta_I & 0        & 0        & \beta_c   & \beta_I   & 0        & 0     & \beta_c+\iota & \beta_I & 0    \\
    0     & \kappa_S & -j_2     & 0         & 0        & 0         & 0         & 0        & 0     & 0        & \iota    & 0        \\
    0     & \theta   & \gamma_b & -j_3      & 0        & 0         & 0         & 0        & 0     & 0        & 0        & \iota    \\
    0     & 0        & 0        & 0         & 0        & \beta^*   & \beta^*   & \beta^*+\iota_b & 0 & 0   & 0        & 0        \\
    0     & 0        & 0        & 0         & 0        & -j_5      & 0         & 0        & 0     & 0        & 0        & 0        \\
    0     & 0        & 0        & 0         & 0        & \kappa_I  & -j_6      & 0        & 0     & 0        & 0        & 0        \\
    0     & 0        & 0        & 0         & 0        & \theta    & \gamma_b  & -j_7     & 0     & 0        & 0        & 0        \\
    0     & 0        & 0        & 0         & \gamma   & 0         & 0         & 0        & -j_8  & 0        & 0        & \iota_b  \\
    0     & 0        & 0        & 0         & 0        & \gamma    & 0         & 0        & 0     & -j_9     & 0        & 0        \\
    0     & 0        & 0        & 0         & 0        & 0         & \gamma    & 0        & 0     & \kappa_R & -j_{10}  & 0        \\
    0     & 0        & 0        & 0         & 0        & 0         & 0         & \gamma   & 0     & \theta   & \gamma_b & -j_{11}
  \end{pmatrix},
  \label{eqn:Bifurcation_Matrix}
\end{equation}
where $j^*_{12}=\beta^*+\beta_c$ and $j^*_{13}=\beta^*+\beta_I$. A computer algebra system confirms that $\mathcal{J}_{\mathcal{T}}$ has a simple zero eigenvalue at $\beta=\beta^{*}$, as required by Theorem~4.1 of \cite{castillo2004dynamical}.

\subsection{Eigenvectors of \texorpdfstring{$\mathcal{J}_{\mathcal{T}}$}{J}}
\label{subsubsec:CEE_eigenvectors}

Let $\mathbf{w}=(w_1,\ldots,w_{12})^{\top}$ be the right eigenvector and $\mathbf{v}=(v_1,\ldots,v_{12})$ the left eigenvector of $\mathcal{J}_{\mathcal{T}}$ corresponding to the zero eigenvalue, normalized so that $\mathbf{v}\cdot\mathbf{w}=1$. The derivation follows by solving $\mathcal{J}_{\mathcal{T}}\mathbf{w}=\mathbf{0}$ and $\mathbf{v}\mathcal{J}_{\mathcal{T}}=\mathbf{0}$ (see Appendix \ref{app:proof_CEE_eigenvectors}); the result is as follows.

With $w_3>0$ and $w_9>0$ as free positive parameters, the non-zero components of $\mathbf{w}$ are
\begin{equation}
  \begin{aligned}
    w_1 &= \frac{
      \gamma\bigl(\gamma_b\iota_b\kappa_S\,w_3
                 +\iota_b j_2\theta\,w_3
                 +\iota j_3\kappa_S\,w_9\bigr)
      - \gamma j_3 w_3(\beta_c j_2+\beta_I\kappa_S)
      - j_3 j_4 j_8\kappa_S\,w_9
    }{\gamma j_3\kappa_S\mu},\\[4pt]
    w_2 &= \frac{j_2\,w_3}{\kappa_S},\qquad
    w_4  = \frac{(\gamma_b\kappa_S+j_2\theta)\,w_3}{j_3\kappa_S},\qquad
    w_5  = \frac{j_8}{\gamma},\\[2pt]
    &w_6=w_7=w_8=w_{10}=w_{11}=w_{12}=0.
  \end{aligned}
  \label{eqn:right_eigenvector}
\end{equation}

With $v_3>0$ and $v_8>0$ as free positive parameters, the non-zero components of $\mathbf{v}$ are
\begin{equation}
  \begin{aligned}
    &v_1=v_4=v_9=v_{12}=0,\qquad v_3>0,\quad v_8>0,\\[4pt]
    v_2  &= -\frac{\kappa_S}{\beta_c-j_1}\,v_3,\qquad
    v_5   = \frac{j_7}{\iota_b+\beta^*}\,v_8,\\[4pt]
    v_{11} &= \frac{(\beta_c-j_1)\,\iota\,v_3
                   -\beta_I\kappa_S}{(\beta_c-j_1)\,j_{10}},\qquad
    v_{10}  = \frac{(\beta_c+\iota)\,v_2+\kappa_R\,v_{11}}{j_9},\\[4pt]
    v_7  &= \frac{\beta_I\,v_2+j_4\,v_5+\gamma_b\,v_8
                 +\gamma\,v_{11}}{j_6},\\[4pt]
    v_6  &= \frac{\beta_c\,v_2+j_4\,v_5+\kappa_I\,v_7
                 +\theta\,v_8+\gamma\,v_{10}}{j_5}.
  \end{aligned}
  \label{eqn:left_eigenvector}
\end{equation}

\begin{remark}
The component $v_2=-\kappa_S/(\beta_c-j_1)$ is the key quantity. When $\beta_c>j_1$ (the backward bifurcation regime) we have $v_2<0$; when $\beta_c<j_1$ we have $v_2>0$. This sign change drives the bifurcation direction, as shown in Section \ref{subsec:CEE_bifurcation}.
\end{remark}

\subsection{Bifurcation direction}
\label{subsec:CEE_bifurcation}

Theorem~4.1 of \cite{castillo2004dynamical} requires the two scalar coefficients
\begin{equation}
  a = \sum_{k,i,j=1}^{12}
      v_k\,w_i\,w_j\,
      \frac{\partial^2 y_k}{\partial x_i\,\partial x_j}
      \bigg|_{x_{ab}^{0}},
  \qquad
  b = \sum_{k,i=1}^{12}
      v_k\,w_i\,
      \frac{\partial^2 y_k}{\partial x_i\,\partial\beta^*}
      \bigg|_{x_{ab}^{0}}.
  \label{eqn:ab_definition}
\end{equation}
The bifurcation is backward (subcritical) if $a>0$, $b>0$; forward (supercritical) if $a<0$, $b>0$.

\paragraph{Coefficient $b$.}
The only non-zero contribution to $b$ comes from $y_5$ and $x_5$ (see Appendix \ref{app:Bif_Coeff}):
\begin{equation}
  b
  = v_5\,w_5\,
    \frac{\partial^2 y_5}{\partial x_5\,\partial\beta^*}
    \bigg|_{x_{ab}^{0}}
  = v_5\,w_5 = 1 > 0.
  \label{eqn:b_coeff}
\end{equation}
Since $b>0$ always, the bifurcation direction is determined solely by the sign of $a$.

\paragraph{Coefficient $a$: explicit formula.} Substituting the non-zero second partial derivatives from Appendix \ref{app:Bif_Coeff} into~\eqref{eqn:ab_definition} and discarding all zero contributions yields
\begin{equation}
  a
  = \underbrace{v_2\,w_1\!\left(w_2\beta_c+\beta_I\right)}_{\displaystyle\mathbf{[I]}}
  \underbrace{-\;w_5\,(w_2+1)\,v_2\,j_4}_{\displaystyle\mathbf{[II]}}
  +\underbrace{v_5\,w_5\!\left(w_1 j_4-w_2\beta_c-\beta_I\right)}_{\displaystyle\mathbf{[III]}}
  +\underbrace{w_5\,v_6\!\left(w_2(j_4+\beta_c)+\beta_I\right)}_{\displaystyle\mathbf{[IV]}}
  +\underbrace{w_5\,j_4\,(v_7+w_4)}_{\displaystyle\mathbf{[V]}}.
  \label{eqn:a_explicit}
\end{equation}
Each group has a clear biological role:
\begin{enumerate}[\roman*.]
  \item [\hspace{0pt}] Group \textbf{[I]} - bacterial secondary infections in the susceptible class: $w_2\beta_c$ (via carriers, weighted by mean carrier sojourn $j_2/\kappa_S$) plus $\beta_I$ (via active-disease individuals).
  \item [\hspace{0pt}] Group \textbf{[II]} - depletion of susceptibles through viral spread at rate $j_4$ along carrier ($w_2$) and direct ($+1$) pathways.
  \item [\hspace{0pt}] Group \textbf{[III]} - modulation of viral spread by the bacterial carrier dynamics through the co-infected susceptible class (weight $v_5$).
  \item [\hspace{0pt}] Group \textbf{[IV]} - co-infection amplification through the co-infected carrier class $x_{ic}$ (weight $v_6$).
  \item [\hspace{0pt}] Group \textbf{[V]} - coupling through disease progression ($v_7$, doubly-infected class) and bacterial recovery ($w_4$),  both scaled by the viral replication rate $j_4$.
\end{enumerate}

\paragraph{Sign of $a$: dominant-term analysis.} Numerical evaluation confirms that Group~\textbf{[I]} contributes more than $99\%$ of $|a|$ throughout the biologically plausible parameter space (see Table \ref{tab:ab_numerical} for representative values). Combining Groups~\textbf{[I]} and~\textbf{[II]}, which share the factor $v_2$:
\begin{equation}
  \mathbf{[I]}+\mathbf{[II]}
  = v_2\!\underbrace{\bigl[w_1(w_2\beta_c+\beta_I)-w_5(w_2+1)j_4\bigr]}_{\displaystyle\Psi}
  = -\frac{\kappa_S}{\beta_c-j_1}\,\Psi.
  \label{eqn:IpII}
\end{equation}
The quantity $\Psi$ depends on $w_1$, the DFE susceptible-pool weight. The numerator of $w_1$ (equation~\eqref{eqn:right_eigenvector}) contains the term $-\gamma j_3(\beta_c j_2+\beta_I\kappa_S)$, which is strictly negative whenever the parameters are positive. For all $\beta_c>j_1$ (the proposed backward bifurcation regime) this negative term dominates, giving $w_1<0$ and therefore $\Psi<0$. Substituting into~\eqref{eqn:IpII}:
\begin{equation}
  \beta_c>j_1
  \;\Longrightarrow\;
  \mathbf{[I]}+\mathbf{[II]}
  = -\frac{\kappa_S}{\underbrace{\beta_c-j_1}_{>0}}\,
    \underbrace{\Psi}_{<0} > 0
  \;\Longrightarrow\; a > 0.
  \label{eqn:a_positive}
\end{equation}
The analogous argument for $\beta_c<j_1$ gives $v_2>0$, $\mathbf{[I]}+\mathbf{[II]}<0$, and $a<0$. Hence
\begin{equation}
  \operatorname{sgn}(a)
  = \operatorname{sgn}(\beta_c - j_1)
  = \operatorname{sgn}(\beta_c - \kappa_S - \theta - \mu).
  \label{eqn:a_sign}
\end{equation}

\begin{table}[pos=ht]
  \centering
  \caption{Numerical decomposition of $a$ for two representative
           parameter sets.
           Baseline: $\gamma=0.33$, $\gamma_b=0.1$, 
           $\beta_I=0.4$,
           $\kappa_S=0.005$, $\kappa_I=0.05$,
           $\kappa_R=0.02$, 
           $\theta=0.032$,
           $\iota=0.00$, 
           $\iota_b=0.005$, $\mu=0.0001$.
           The threshold is $j_1=\kappa_S+\theta+\mu=0.0371$.}
  \label{tab:ab_numerical}
  \renewcommand{\arraystretch}{1.1}
  \begin{tabular}{@{}lrrrrrrr@{}}
    \hline
    Case & $\beta_c$ & \textbf{[I]} & \textbf{[II]} & \textbf{[III]}
         & \textbf{[IV]} & \textbf{[V]} & $a$ \\
    \hline
    Forward ($\beta_c<j_1$) & 0.020 & $-174.0809$ & $-0.0006$ & $-0.5942$ & $+0.0176$ & $+0.0153$ & $-174.6428$ \\
    Backward ($\beta_c>j_1$) & 0.050& $+3665.3115$ & $+0.0008$ & $-5.3776$ & $+0.0151$ & $+0.0150$ & $+3659.9646$ \\ 
    \hline
  \end{tabular}
\end{table}
 
\paragraph{Biological interpretation of the criterion.} The rate $j_1=\kappa_S+\theta+\mu$ is the total per-capita rate at which a carrier leaves the carrier class: by progressing to active disease (rate $\kappa_S$), clearing spontaneously (rate $\theta$), or dying (rate $\mu$). The condition $\beta_c>j_1$ states that a carrier produces more than one new carrier per unit time \emph{before} leaving the class — sustaining a self-reinforcing carrier reservoir that persists even when $\mathcal{R}_0<1$. This is precisely the same mechanism that drives the backward bifurcation at the QVFE (Theorem \ref{thm:ReducedModel_NoVirus_Backward}), confirming that the bifurcation structure of the full co-infection model is inherited from the bacterial-only subsystem.

\begin{lemma}[Bifurcation type at the CEE] \label{lem:Backward_Bifurcation} Consider System~\eqref{eqn:MainModel} with $\mathcal{R}_{0b}^{T1}<\mathcal{R}_{0a}^{V1}$ (so $\mathcal{R}_0=\mathcal{R}_{0a}^{V1}$) and bifurcation parameter $\beta^{*}=j_4$. Let $a$ be defined by~\eqref{eqn:a_explicit} and $b=1>0$ by~\eqref{eqn:b_coeff}.

\begin{enumerate}[\roman*]
  \item \textbf{Backward bifurcation.} If $\beta_c>\kappa_S+\theta+\mu$, then $a>0$ and the system undergoes a subcritical bifurcation at $x_{ab}^{0}$ when $\mathcal{R}_0=1$. A stable CEE coexists with the stable DFE for $\mathcal{R}_0$ slightly below~$1$.
  
  \item \textbf{Forward bifurcation.} If $\beta_c\leq\kappa_S+\theta+\mu$, then $a\leq 0$ and the bifurcation is supercritical. The DFE is the only stable equilibrium for $\mathcal{R}_0<1$, and the unique CEE $x_{ab}^{*}$ is locally asymptotically stable for $\mathcal{R}_0>1$ near~$1$.
\end{enumerate}
\end{lemma}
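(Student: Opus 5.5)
The plan is to apply Theorem~4.1 of \cite{castillo2004dynamical} directly to System~\eqref{eqn:MainModel_NewNotation}, with $\beta$ as the bifurcation parameter and critical value $\beta^{*}=j_4$. Four hypotheses have to be checked in order.

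First, I verify that $\mathcal{J}_{\mathcal{T}}$ in~\eqref{eqn:Bifurcation_Matrix} has a simple zero eigenvalue and that all other eigenvalues have negative real part. The lower-right structure is block triangular. The viral block contributes the eigenvalue $\beta^*-j_4=0$ together with $-j_5,-j_6,-j_7$ and the recovered-class diagonals $-j_8,\dots,-j_{11}$. The bacterial block $\begin{pmatrix}\beta_c-j_1&\beta_I\\ \kappa_S&-j_2\end{pmatrix}$ has negative trace and positive determinant exactly when $\mathcal{R}_{0b}^{T1}<1$. This last condition is where the standing assumption $\mathcal{R}_{0b}^{T1}<\mathcal{R}_{0a}^{V1}=1$ is used. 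The remaining diagonal entries $-\mu$ and $-j_3$ are negative.

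Second, I take the right and left null vectors $\mathbf{w},\mathbf{v}$ from~\eqref{eqn:right_eigenvector}--\eqref{eqn:left_eigenvector}, with the normalization $\mathbf{v}\cdot\mathbf{w}=1$. Since $w_6=\dots=w_{12}=0$ apart from $w_9$, only second derivatives of the bilinear incidence terms $x_k\xi_V$ and $x_k\xi_B$ with respect to $x_1,x_2,x_3,x_4,x_5,x_9$ survive. I compute these at the DFE with $N=1$, keeping the standard-incidence corrections from $\partial N/\partial x_i$. This yields the five groups \textbf{[I]}--\textbf{[V]} of~\eqref{eqn:a_explicit}. The coefficient $b=v_5w_5=1$ in~\eqref{eqn:b_coeff} follows because $\partial^2 y_k/\partial x_i\partial\beta$ at the DFE is nonzero only for $k=i=5$ among the components with $w_i\neq0$.

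Third, I determine the sign of $a$. I group \textbf{[I]}+\textbf{[II]} as $v_2\Psi$ with $v_2=-\kappa_S v_3/(\beta_c-j_1)$, as in~\eqref{eqn:IpII}. I then show $\Psi<0$ by proving $w_1<0$: its numerator is dominated by $-\gamma j_3(\beta_c j_2+\beta_I\kappa_S)-j_3j_4j_8\kappa_S w_9$, while the positive terms carry the small factors $\iota_b$ and $\iota$.

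The main obstacle is controlling \textbf{[III]}--\textbf{[V]}, whose signs are not fixed. The plan is to bound them by a quantity independent of $(\beta_c-j_1)^{-1}$, whereas \textbf{[I]}+\textbf{[II]} scales like $\kappa_S|\Psi|/|\beta_c-j_1|$. On the stated parameter ranges, the numerical decomposition of Table~\ref{tab:ab_numerical} indicates this domination holds. I would present that as the closing step, with the honest caveat that the strict sign claim $\operatorname{sgn}(a)=\operatorname{sgn}(\beta_c-j_1)$ in~\eqref{eqn:a_sign} is established asymptotically, or numerically over the plausible region, rather than for all parameters.

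With $b>0$, parts (i) and (ii) then follow from the two sign cases of Theorem~4.1 of \cite{castillo2004dynamical}. In the backward case, the bistability statement uses the emergence of an unstable small-amplitude branch for $\mathcal{R}_0<1$ and the continuation of a stable large branch, as seen in Figure~\ref{fig:QVFE_bifurcation}. In the forward case, the local stability of the bifurcating CEE near $\mathcal{R}_0=1$ is part of the theorem's conclusion.
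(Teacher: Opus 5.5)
You follow the paper's own route, and it fails at the same point. Your Step~1 is correct, but it is incompatible with case~(i). The simple-zero-eigenvalue hypothesis of Theorem~4.1 of \cite{castillo2004dynamical} requires the bacterial $2\times2$ block to have determinant $j_1j_2(1-\mathcal{R}_{0b}^{T1})>0$. Since $\mathcal{R}_{0b}^{T1}=\beta_c/j_1+\beta_I\kappa_S/(j_1j_2)>\beta_c/j_1$, the standing assumption $\mathcal{R}_{0b}^{T1}<\mathcal{R}_{0a}^{V1}=1$ already forces $\beta_c<j_1$. Conversely, if $\beta_c>j_1$ then $\mathcal{R}_{0b}^{T1}>1$. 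The block then has a positive eigenvalue, the DFE is unstable for every $\beta$ near $\beta^*$, and the theorem does not apply. So case~(i) is vacuous under the lemma's hypotheses and false without them. No bound on \textbf{[III]}--\textbf{[V]}, numerical or rigorous, can produce a stable DFE coexisting with a stable CEE.

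The same nonsingularity invalidates the null vectors you import. Once $w_6=w_7=w_8=w_{10}=w_{11}=w_{12}=0$, rows~2 and~3 of $\mathcal{J}_{\mathcal{T}}\mathbf{w}=\mathbf{0}$ are exactly that nonsingular system in $(w_2,w_3)$. Hence $w_2=w_3=0$, and then $w_4=0$. Likewise, $v_1=v_4=0$, and columns~2 and~3 of $\mathbf{v}\mathcal{J}_{\mathcal{T}}=\mathbf{0}$ give $v_2=v_3=0$. The vectors in \eqref{eqn:right_eigenvector}--\eqref{eqn:left_eigenvector} satisfy only one equation of each pair; both hold only when $\mathcal{R}_{0b}^{T1}=1$. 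Their two free parameters also contradict the simplicity you established in Step~1.

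The consistent vectors are $\mathbf{w}=(w_1,0,0,0,w_5,0,0,0,w_9,0,0,0)^{\top}$ with $w_9=\gamma w_5/j_8$ and $w_1=-(\gamma+\iota+\mu)w_5/j_8$. The left vector $\mathbf{v}$ is supported on $v_5,\dots,v_8$, for example with $v_8=(\beta^*+\iota_b)v_5/j_7$. Your ``key quantity'' $v_2$ is therefore zero, and groups \textbf{[I]}, \textbf{[II]}, \textbf{[IV]}, \textbf{[V]} vanish. The only surviving contribution comes from the incidence $x_1\xi_V$ in $y_5$, giving $a=2\beta^*v_5w_5w_1=2\beta^*w_1<0$ for every $\beta_c$. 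With standard incidence the $N$-corrections reduce to the same value, because $w_1+w_5+w_9=0$. The bifurcation at $\mathcal{R}_{0a}^{V1}=1$ is thus always forward. Since $\mathbf{w}$ lies in the invariant bacteria-free subspace, the branch it produces is the QBFE $x_{as}^{bf}$, not a coexistence equilibrium.

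Finally, for $a>0$ Theorem~4.1 gives only an \emph{unstable} small-amplitude branch; a stable large-amplitude CEE is not part of its conclusion. Figure~\ref{fig:QVFE_bifurcation} cannot supply that branch. It concerns the bacteria-only subsystem, for which Lemma~\ref{lem:Unique_QVFE} excludes endemic equilibria when $\mathcal{R}_{0b}^{T1}\le 1$.
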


\begin{proof}
The proof is an application of Theorem~4.1 of \cite{castillo2004dynamical}. With $b=1>0$ established in \ref{eqn:b_coeff} and the sign of $a$ determined by \ref{eqn:a_sign}- \ref{eqn:a_positive}, the two cases of the theorem apply directly. The supporting eigenvector calculations are in Appendix \ref{app:proof_CEE_eigenvectors}, and the second partial derivatives used in \ref{eqn:a_explicit} are listed in Appendix \ref{app:Bif_Coeff}.
\end{proof}

\begin{remark}[Epidemiological consequences]
\label{rem:CEE_backward}
Lemma \ref{lem:Backward_Bifurcation} has three concrete implications.
\begin{enumerate}[\roman*]
  \item \textbf{Threshold reduction is necessary but not sufficient.} When $\beta_c>j_1$, reducing $\mathcal{R}_0$ below~$1$ by any means (e.g.\ reducing viral transmission $\beta$, or increasing viral recovery $\gamma$) does not eliminate co-infection. The coexistence equilibrium persists as a stable attractor, and eradication requires pushing the system through the  saddle-node bifurcation, demanding a larger sustained effort.

  \item \textbf{Spontaneous clearance $\theta$ is the critical lever.} The backward bifurcation is removed as soon as $\theta>\beta_c-\kappa_S-\mu$. Interventions that promote carrier clearance — such as prophylactic treatment or nutritional supplementation in pneumoccocal settings — can shift the system from subcritical to supercritical, restoring the guarantee that $\mathcal{R}_0<1$ implies elimination.

  \item \textbf{Structural consistency across equilibrium levels.} The same criterion $\beta_c>j_1$ governs the backward bifurcation at the QVFE (single bacterial disease, Theorem \ref{thm:ReducedModel_NoVirus_Backward}) and at the CEE (both diseases coexisting). This consistency confirms that the bifurcation is a property of the carrier transmission mechanism, not an artefact of the single-disease reduction.
\end{enumerate}
\end{remark}

\section{Sensitivity Analysis}
\label{sec:Sensitivity}

While the analytical equations calculated in the previous sections outline the general structure of the co-infection system, they do not tell us which parameters are the most critical drivers of the epidemic thresholds and invasion conditions when biological uncertainties are involved. In earlier single-disease work, we used normalized forward-sensitivity indices, evaluated locally at a fixed parameter set, to rank the drivers of an epidemic threshold \cite{fome2023deterministic}. That approach is less well suited here: several of the present outputs ($x_{is}^{bf}$, $x_{sc}^{vf}$) are defined only conditionally, existing solely once their respective reproduction number exceeds one, and the local derivative underlying
a forward-sensitivity index cannot be computed at such a threshold boundary; more generally, a single-point local index cannot capture how influential a parameter is across the full biologically plausible
range considered in Table \ref{tab:parameter_justification}. To bridge this gap and understand how variation in each biological parameter impacts the system globally, we instead perform a global
sensitivity analysis across the seven key model outputs: $\mathcal{R}_{0a}^{V1}$, $\mathcal{R}_{0b}^{T1}$, $\mathcal{R}_0$, $\mathbf{Inv}^{\mathbf{B}}$, $\mathbf{Inv}^{\mathbf{V}}$, $x_{is}^{bf}$, and $x_{sc}^{vf}$. This is achieved using Latin Hypercube Sampling (LHS) paired with Partial Rank Correlation Coefficients (PRCC), a standard and robust approach for evaluating nonlinear, monotone disease transmission models \cite{blower1994sensitivity,marino2008methodology}. The full computational pipeline and algorithmic implementation are detailed in Appendix \ref{app:LHS_PRCC}.

\begin{figure}[pos=ht]
  \centering
\includegraphics[width=\textwidth]{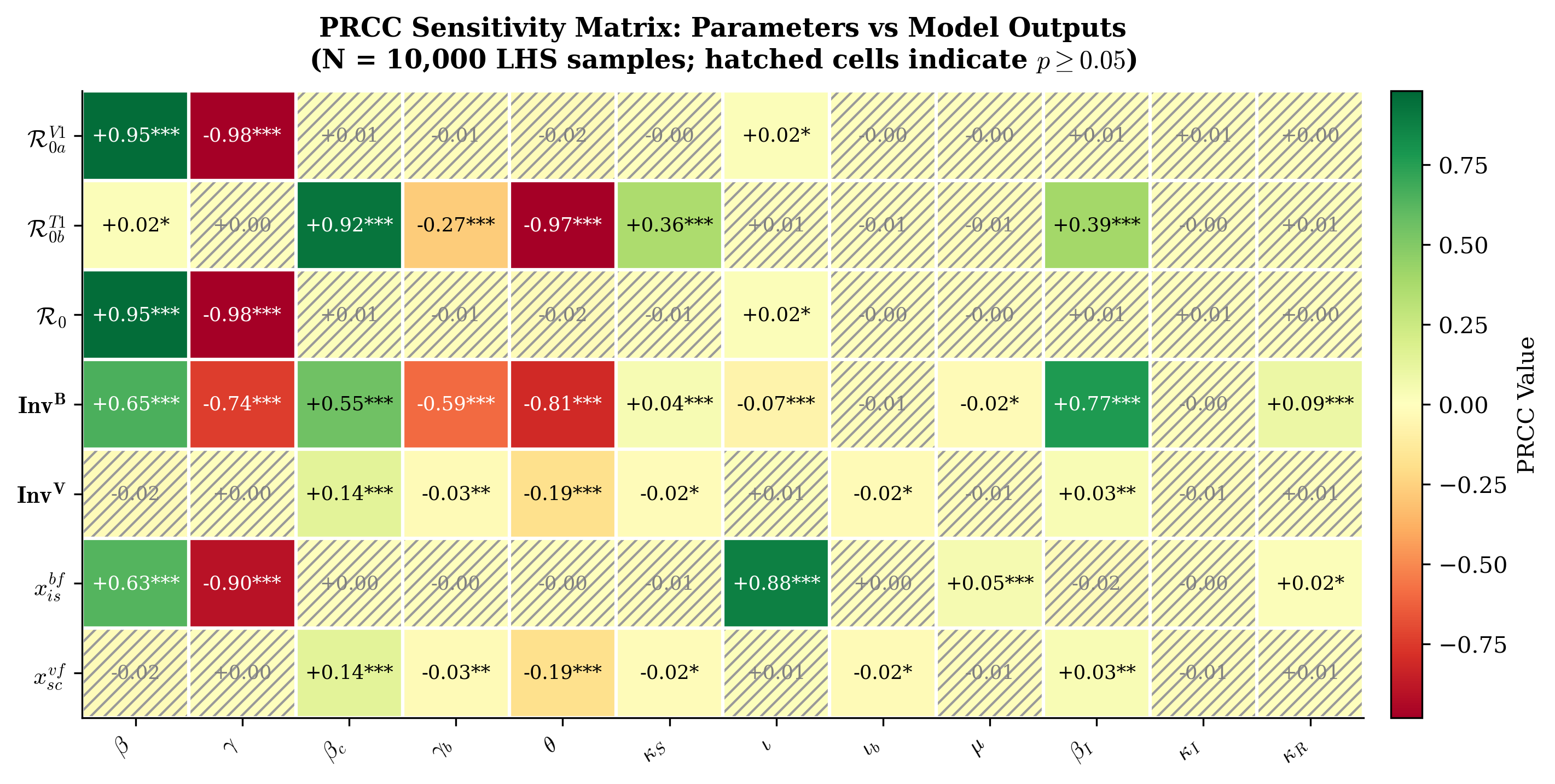}
  \caption{PRCC heatmap showing the sensitivity of all seven model outputs to the twelve model parameters. Green indicates a positive correlation, red indicates a negative correlation, and hatched cells denote correlations that are not statistically significant ($p\geq0.05$). PRCC values are annotated with significance stars ($^{*}p<0.05$, $^{**}p<0.01$, $^{***}p<0.001$) based on $N=5{,}000$ Latin Hypercube samples.}
  \label{fig:prcc_heatmap}
\end{figure}

Figure \ref{fig:prcc_heatmap} shows that $\mathcal{R}_0$ has the same PRCC signature as $\mathcal{R}_{0a}^{V1}$, driven by $\beta$ (positive) and $\gamma$ (negative), consistent with $\mathcal{R}_0=\mathcal{R}_{0a}^{V1}$ at these parameters. $\mathcal{R}_{0b}^{T1}$ is instead driven by $\beta_c$ (positive) and $\theta$ (negative), with $\beta_I$ and $\gamma_b$ contributing more weakly -- confirming the two thresholds depend on largely disjoint parameter sets. By contrast, $\mathbf{Inv}^{\mathbf{B}}$ loads significantly on the purely viral parameters $\beta$ and $\gamma$, the expected signature of the co-infection amplification effect (Section \ref{subsec:BacteriaEmpire}): higher influenza burden increases the size of $x_{is}^{bf}$, the sub-population in which bacterial transmission is elevated. $\mathbf{Inv}^{\mathbf{V}}$ shares its PRCC profile with $x_{sc}^{vf}$, and $\iota$ is the dominant driver of $x_{is}^{bf}$.

\section{Discussion}\label{sec:discussion}

This study asks whether giving a co-infection model explicit carriage-stage structure -- rather than folding bacterial status into severity classes or a single generic carrier compartment -- changes the qualitative epidemic behavior of an influenza--pneumococcus system. Building a twelve-compartment model that tracks each host's joint viral and bacterial history through recovery, we show that it does: the model reproduces the well-documented asymmetry where influenza strongly promotes pneumococcal disease but not the reverse, and it isolates a precise, testable condition -- carrier transmission rate exceeding the carrier's total exit rate, $\beta_c>\kappa_S+\theta+\mu$ -- under which the system undergoes a backward bifurcation, meaning reducing $\mathcal{R}_0$ below one is not enough to guarantee elimination. A global sensitivity analysis then identifies which parameters actually drive each threshold, showing that the bacterial and viral reproduction numbers are governed by almost entirely separate parameter sets, while bacterial \emph{invasion} potential is strongly shaped by viral parameters -- the quantitative signature of the synergy the model was built to capture.

These results carry direct weight for sub-Saharan Africa, the region motivating this study, and not merely as structural interest. Surveillance across the region shows both pathogens circulating at levels that make their interaction a routine clinical event rather than a rare coincidence. National sentinel surveillance in Tanzania documents sustained, year-round influenza circulation with marked seasonal peaks \cite{mmbaga2012results,shedura2023evaluation}, and in Soweto, influenza and RSV together accounted for a substantial share of adult deaths \cite{karstaedt2009influenza}, with pneumonia remaining a leading cause of adult death in Kenya \cite{scott2000aetiology}. On the bacterial side, nasopharyngeal carriage of \emph{S. pneumoniae} is common and persistent across East Africa: a Dar es Salaam cohort found high carriage prevalence, including penicillin-non-susceptible strains, among young children \cite{moyo2012penicillin}, and northern Tanzanian surveillance confirms that carriage remains substantial even after PCV13 introduction, with continued co-occurrence of vaccine and non-vaccine serotypes \cite{emgaard2019carriage,emgaard2024co}. A mechanism that keeps this carriage transmissible even as $\mathcal{R}_0$ falls below one is therefore not a mathematical curiosity but a candidate explanation for why reducing transmission alone has not been sufficient to eliminate disease in these settings.

Evidence that the two pathogens are causally linked, not merely co-occurring, is itself substantially African in origin. In a large randomized trial in South Africa, vaccinating infants against pneumococcus prevented 31\% of pneumonia cases linked to any of seven respiratory viruses \cite{madhi2004role} -- direct evidence that reducing one pathogen changes the burden of the other. That a purely bacterial intervention reduced viral-associated disease is hard to reconcile with a model that has no explicit bacterial compartment for the virus to interact with, and it is consistent with the amplification pathway formalized here by $\mathbf{Inv}^{\mathbf{B}}$ (Section~\ref{subsec:BacteriaEmpire}): removing the bacterial reservoir removes the amplification the virus would otherwise exploit. This pattern recurs across the continent at the level of routine surveillance: co-detection of influenza and \emph{S. pneumoniae} is common among children hospitalized with respiratory illness in South Africa and Senegal \cite{moleleki2025respiratory,knobbe2019pathogens}, and a further South African study found tuberculosis, pneumococcal disease, and influenza hospitalizations rising and falling together each year \cite{dangor2014temporal}. Tanzanian data add a diagnostic dimension to the same picture: in a large outpatient study, most febrile children did not have malaria, and a substantial share of the remaining fevers had no single pathogen identified \cite{d2014beyond}, while children hospitalized with respiratory illness in Tanzania frequently show viral and bacterial pathogens detected together rather than as a single cause \cite{ngocho2020viral,kelly2023etiologies}. None of these studies can, by itself, distinguish a shared seasonal driver from a genuine biological interaction, but the mechanism identified in Section~\ref{sec:CEE} gives a concrete, testable pathway -- immune compromise during influenza elevating carrier-to-disease progression -- through which such covariation would be expected to arise even without a common external cause.

This carriage-driven mechanism is only visible because of how the present model is built, which becomes clear when set against existing influenza--pneumococcus models, including African ones built for a related purpose. Kenyan dynamic transmission models of pneumococcal carriage, fitted to the Kilifi longitudinal cohort, have shown that residual vaccine-serotype carriage can persist for years after PCV catch-up campaigns even as invasive disease declines \cite{ojal2017sustained,abdullahi2012rates}; a structurally similar finding for Malawi \cite{lourencco2019determinants} shows this is not specific to one country's rollout. These models, however, treat pneumococcus in isolation and cannot address whether an independent respiratory virus could sustain or accelerate that residual carriage -- the question the present model was built to answer. As discussed in Section~\ref{sec:intro}, Ali et al. \cite{ali2025model} also report a backward bifurcation in an influenza--pneumococcus co-infection model, at $\mathcal{R}_0=\max(\mathcal{R}_f,\mathcal{R}_p)=1$, but give no explicit condition on carrier transmission. The present analysis shows that the bifurcation can instead be traced to a single interpretable condition on the carrier compartment, $\beta_c>\kappa_S+\theta+\mu$ (Section~\ref{sec:CEE}), and that this same condition governs both the single-disease bacterial endemic state and the full coexistence state (Section~\ref{subsubsec:VirusEmpire}). P\'erez and Oluyori \cite{perez2022model} similarly track joint infection history through recovery, but classify bacterial infection by severity rather than by carriage stage, so their model has no asymptomatic carrier compartment at all; since the backward bifurcation found here comes specifically from transmission by carriers (Section~\ref{sec:CEE}), a model without such a compartment cannot produce this result under any parameter choice. Taken together, these comparisons indicate that the results reported here follow from the carriage-stage structure itself, rather than from a particular calibrated parameter set that a differently-structured model could also happen to reach.

The same pattern -- backward bifurcation confined to the bacterial sub-model and the coexistence state, with the viral sub-model remaining forward (Theorem~\ref{thm:ReducedModel_NoBacterial_4ward}) -- has also been reported for other co-infection pairs, though for a different underlying reason. Bhunu et al. \cite{bhunu2009modeling} found, for HIV--tuberculosis co-infection, that the tuberculosis sub-model alone can produce a backward bifurcation while the HIV sub-model remains globally stable, and the same qualitative asymmetry was later reproduced for COVID-19--tuberculosis co-infection \cite{mekonen2022mathematical,omame2021fractional}. There, the bifurcation is typically traced to a delay in progression from latent to active tuberculosis -- itself a disease pattern with substantial sub-Saharan African burden, underscoring that this class of threshold behavior is not a mathematical edge case in the settings this paper is concerned with. Here, by contrast, the bifurcation condition (Equation~\eqref{eqn:a_sign}) is entirely a statement about direct transmission from the \emph{asymptomatic carrier} class relative to the rate at which carriers leave that class. This distinction matters for intervention design: where a latency-delay mechanism points toward shortening time-to-diagnosis, a carrier-transmission mechanism points toward reducing $\beta_c$ directly -- for example, through measures that lower contact-adjusted shedding from asymptomatic carriers -- or raising the spontaneous clearance rate $\theta$ (Remark~\ref{rem:CEE_backward}(ii)).

This carrier-transmission mechanism is also distinct from what we have found in related modeling frameworks that treat the two pathogens as a virus--virus pair rather than a virus--bacterium pair. The backward bifurcation identified in Sections~\ref{subsec:QDFE} and~\ref{sec:CEE} does not arise in the purely viral--viral cross-immunity models we have previously used for influenza and SARS-CoV-2 co-circulation under quarantine \cite{fome2025analysis,fome2025interplay}, nor in the strain-competition setting studied in \cite{fome2026dynamics}, where full cross-immunity instead drives competitive exclusion of the strain with the lower quarantine reproduction number and only partial cross-immunity permits coexistence. It is, however, consistent with -- and gives fuller population-level structure to -- our earlier finding, in a simpler two-disease single-host model, that backward bifurcation emerges once each sub-strain's reproduction number exceeds a threshold value driven specifically by asymptomatic transmission \cite{fome2024nordic}. The present analysis identifies the precise mechanism behind that earlier observation and shows that the same condition, rather than being specific to that simplified two-disease setting, persists once the model is extended to a full twelve-compartment influenza--pneumococcus system with carriage-stage structure and joint infection history. These qualitative comparisons are complemented by the PRCC results (Figure~\ref{fig:prcc_heatmap}), which give a population-level, quantitative signature of the same synergy. The clean separation between $\mathcal{R}_{0a}^{V1}$, driven by $\beta$ and $\gamma$ alone, and $\mathcal{R}_{0b}^{T1}$, driven by $\beta_c$ and $\theta$ with $\beta_I$ and $\gamma_b$ playing a secondary role, confirms that the two single-disease thresholds are structurally decoupled, exactly as built into the model. The fact that $\mathbf{Inv}^{\mathbf{B}}$ is nonetheless strongly loaded on the purely viral parameters $\beta$ and $\gamma$ is therefore not a modeling artefact but the specific, testable signature of viral--bacterial synergy that the invasion-number construction (Section~\ref{subsec:BacteriaEmpire}) was designed to expose.

This backward-bifurcation mechanism is not merely theoretical in the region motivating this study -- it maps closely onto a documented, continent-wide vaccination experience. Sub-Saharan Africa has now largely completed pneumococcal conjugate vaccine rollout -- as of the early 2020s, all but six of Africa's 54 countries had introduced PCV into national or subnational immunization programmes, most between 2009 and 2016 \cite{dhaliwal2024introduction} -- yet vaccination has not behaved as it did in high-income settings. A seven-year surveillance study in Malawi found that vaccine-serotype carriage remained substantially elevated long after PCV13 introduction, declining far more slowly than expected and persisting at close to 17\% among vaccinated children \cite{swarthout2020high}, a finding corroborated by an independent Malawian modeling study attributing the residual carriage to incomplete competitive displacement of vaccine serotypes \cite{lourencco2019determinants}. In Kenya, a transmission model fitted to pre-vaccination Kilifi data likewise projected that vaccine-serotype carriage would persist well beyond the level needed for elimination, even a decade after catch-up vaccination \cite{ojal2017sustained}. Tanzanian carriage surveillance shows the same qualitative pattern in the PCV13 era, with continued detection of vaccine and non-vaccine serotypes together in young children \cite{emgaard2019carriage,emgaard2024co}. This is precisely the qualitative pattern the model predicts when carrier transmission remains above its exit rate even as $\mathcal{R}_0$ falls: vaccination lowers the reproduction number, but a persistent carrier reservoir can survive the transition, consistent with Theorem~\ref{thm:ReducedModel_NoVirus_Backward} and Remark~\ref{rem:backward_bif}. The present model does not include vaccination explicitly (Section~\ref{subsec:limitations}), so this comparison is qualitative rather than a direct test of the theory; still, the correspondence between four independent African surveillance and modeling efforts and a mechanism the present model isolates analytically strengthens the case that carrier transmission, not just the reproduction number, deserves attention as an independent target in these settings.

\subsection{Limitations}
\label{subsec:limitations}
The model and its analysis carry several limitations. All stability results are local, and the presence of a backward bifurcation means local stability of the disease-free equilibrium does not guarantee global elimination. The model is deterministic, considers a single influenza strain and a single pneumococcal serotype, and does not include vaccination or treatment -- an omission that matters specifically for the African rollout comparison drawn above, since none of the Kenyan, Malawian, or Tanzanian studies cited there can be fitted directly without a vaccination compartment. Three co-infection-specific parameters ($\beta_I,\kappa_I,\kappa_R$) are set from the qualitative direction of mechanistic evidence rather than from measured values (Section~\ref{subsec:ParameterJustification}), consistent with a parameterization gap documented across the field \cite{dhakal2025mechanistic}; the model has not been fitted to surveillance data, so all quantitative outputs should be read as statements about qualitative structure rather than forecasts. Finally, the model distinguishes only current, recovered, and never-infected viral status, so it does not resolve the specific 4--6 day window of heightened risk identified in within-host studies \cite{shrestha2013timedose}, and cannot capture within-window timing effects on invasion risk.

\subsection{Conclusion}
\label{sec:conclusion}
This paper introduced a twelve-compartment influenza--pneumococcus co-infection model that tracks carriage stage jointly with viral history through recovery, and showed that this structure produces a backward bifurcation governed by a single interpretable condition, $\beta_c>\kappa_S+\theta+\mu$, under which reducing $\mathcal{R}_0$ below one is not sufficient to eliminate co-infection. A global sensitivity analysis further showed that the viral and bacterial reproduction thresholds are driven by largely separate parameters, while bacterial invasion potential is shaped by viral transmission and recovery rates, giving a quantitative signature of the synergy between the two pathogens. Set against the residual vaccine-serotype carriage documented in Kenya, Malawi, and Tanzania after PCV introduction, these results indicate that interventions targeting carriage -- raising spontaneous clearance or reducing carrier transmission -- are structural control levers in their own right, not merely proxies for reducing invasive disease, in the sub-Saharan African settings this paper has focused on. They motivate fitting the model to co-infection surveillance data from the region and extending it to include vaccination, treatment, and multiple strains or serotypes.

\section*{Appendices}\label{sec:appendices}

\appendix

\section{Proof of Lemma \ref{lem:Positivity}}\label{app:Positivity}
\begin{proof}
We apply the standard \emph{invariance principle for cooperative systems} \cite{smith2008monotone}: $\mathbb{R}^{12}_{+}$ is positively invariant under \eqref{eqn:MainModel} if and only if, whenever a component $X_{ab}=0$ and all others are non-negative, $X_{ab}'\geq 0$. Setting each $X_{ab}=0$ in turn removes the terms proportional to $X_{ab}$ itself, leaving only inflows from other compartments:
\begin{align*}
  X_{SS}'|_{X_{SS}=0} &= \Lambda+\iota X_{RS}+\iota_b X_{SR}, &
  X_{IS}'|_{X_{IS}=0} &= X_{SS}\xi_V+\iota_b X_{IR}, \\
  X_{SC}'|_{X_{SC}=0} &= X_{SS}\xi_B+\iota X_{RC}, &
  X_{IC}'|_{X_{IC}=0} &= X_{SC}\xi_V+X_{IS}\xi_B, \\
  X_{SI}'|_{X_{SI}=0} &= \kappa_S X_{SC}+\iota X_{RI}, &
  X_{II}'|_{X_{II}=0} &= X_{SI}\xi_V+\kappa_I X_{IC}, \\
  X_{SR}'|_{X_{SR}=0} &= \gamma_b X_{SI}+\iota X_{RR}+\theta X_{SC}, &
  X_{IR}'|_{X_{IR}=0} &= X_{SR}\xi_V+\gamma_b X_{II}+\theta X_{IC}, \\
  X_{RS}'|_{X_{RS}=0} &= \gamma X_{IS}+\iota_b X_{RR}, &
  X_{RI}'|_{X_{RI}=0} &= \kappa_R X_{RC}+\gamma X_{II}, \\
  X_{RC}'|_{X_{RC}=0} &= X_{RS}\xi_B+\gamma X_{IC}, &
  X_{RR}'|_{X_{RR}=0} &= \gamma_b X_{RI}+\gamma X_{IR}+\theta X_{RC}.
\end{align*}
Since $\xi_V,\xi_B\geq 0$ (non-negative combinations of non-negative states divided by $N>0$) and all rate parameters are non-negative, every expression above is non-negative. Hence $X_{ab}'|_{X_{ab}=0}\geq0$ for all twelve compartments, establishing \eqref{eqn:WP_positivity_statement}.
\end{proof}

\section{Derivation of the Next-Generation Matrices}
\label{app:RepNum}

Ordering the infected compartments as
\[
  \mathbf{x}_{\mathrm{inf}}
  = (x_{sc},\,x_{si},\,x_{is},\,x_{ic},\,x_{ii},\,x_{ir},\,x_{rc},\,x_{ri})^{\top},
\]
decomposing~\eqref{eqn:MainModel} into new-infection terms $\mathcal{F}$ and transition terms $\mathcal{V}$, and linearizing about the DFE~\eqref{eqn:DFE} gives the $8\times8$ matrices
\begin{equation}
  F =
  \begin{pmatrix}
    \beta_c & \beta_I & 0      & \beta_c & \beta_I & 0      & \beta_c & \beta_I \\
    0       & 0       & 0      & 0       & 0       & 0      & 0       & 0       \\
    0       & 0       & \beta  & \beta   & \beta   & \beta  & 0       & 0       \\
    0       & 0       & 0      & 0       & 0       & 0      & 0       & 0       \\
    0       & 0       & 0      & 0       & 0       & 0      & 0       & 0       \\
    0       & 0       & 0      & 0       & 0       & 0      & 0       & 0       \\
    0       & 0       & 0      & 0       & 0       & 0      & 0       & 0       \\
    0       & 0       & 0      & 0       & 0       & 0      & 0       & 0
  \end{pmatrix},
  \qquad
  V =
  \begin{pmatrix}
    j_1        & 0     & 0     & 0     & 0     & 0          & -\iota   & 0      \\
    -\kappa_S  & j_2   & 0     & 0     & 0     & 0          & 0        & -\iota \\
    0          & 0     & j_4   & 0     & 0     & -\iota_b   & 0        & 0      \\
    0          & 0     & 0     & j_5   & 0     & 0          & 0        & 0      \\
    0          & 0     & 0     & -\kappa_I & j_6 & 0         & 0        & 0      \\
    0          & 0     & 0     & -\theta & -\gamma_b & j_7   & 0        & 0      \\
    0          & 0     & 0     & -\gamma & 0    & 0          & j_9      & 0      \\
    0          & 0     & 0     & 0     & -\gamma & 0         & -\kappa_R & j_{10}
  \end{pmatrix}.
  \label{eqn:F_matrix}
\end{equation}
Computing $V^{-1}$ and forming $FV^{-1}$ yields the next-generation matrix~\eqref{eqn:FV_Inverse}, whose non-trivial entries are, by disease:

\paragraph{Bacterial.}
\begin{align}
  \mathcal{R}_{0b}^{T1} &= \frac{\beta_c j_2 + \beta_I\kappa_S}{j_1 j_2},
  \label{eqn:R0bT1}\\
  \mathcal{R}_{0b}^{I1} &= \frac{\beta_I}{j_2},
  \label{eqn:R0bI1}\\
  \mathcal{R}_{0b}^{I2} &= \frac{\beta_I}{j_5}\cdot
    \frac{\kappa_S\iota\gamma j_6 j_{10}+\kappa_I(\iota\gamma+j_2\gamma+j_2 j_{10})j_1 j_9+\kappa_R(\iota\gamma+j_2\gamma)j_1 j_6}{j_1 j_2 j_9 j_{10}},
  \label{eqn:R0bI2}\\
  \mathcal{R}_{0b}^{I3} &= \frac{\beta_I}{j_6}\cdot\frac{j_2 j_{10}+\gamma j_2+\gamma\iota}{j_2 j_{10}},
  \label{eqn:R0bI3}\\
  \mathcal{R}_{0b}^{I4} &= \beta_I\cdot\frac{j_1(\kappa_R+j_9)(\iota+j_2)+\kappa_S j_{10}\iota}{j_1 j_2 j_9 j_{10}},
  \label{eqn:R0bI4}\\
  \mathcal{R}_{0b}^{I5} &= \frac{\beta_I}{j_{10}}\cdot\frac{j_2+\iota}{j_2},
  \label{eqn:R0bI5}\\
  \mathcal{R}_{0b}^{C1} &= \frac{\beta_c}{j_9}\cdot\frac{j_1+\iota}{j_1},
  \label{eqn:R0bC1}\\
  \mathcal{R}_{0b}^{C2} &= \frac{\beta_c}{j_5}\cdot\frac{j_1 j_9+\gamma j_1+\gamma\iota}{j_1 j_9}.
  \label{eqn:R0bC2}
\end{align}

\paragraph{Viral.}
\begin{align}
  \mathcal{R}_{0a}^{V1} &= \frac{\beta}{j_4},
  \label{eqn:R0aV1}\\
  \mathcal{R}_{0a}^{V2} &= \beta\cdot\frac{\kappa_I(j_4\gamma_b+\iota_b\gamma_b+j_4 j_7)+\theta j_6(j_4+\iota_b)+j_4 j_6 j_7}{j_4 j_5 j_6 j_7},
  \label{eqn:R0aV2}\\
  \mathcal{R}_{0a}^{V3} &= \frac{\beta}{j_6}\cdot\frac{j_4 j_7+\gamma_b j_4+\gamma_b\iota_b}{j_4 j_7},
  \label{eqn:R0aV3}\\
  \mathcal{R}_{0a}^{V4} &= \frac{\beta}{j_7}\cdot\frac{j_4+\iota_b}{j_4}.
  \label{eqn:R0aV4}
\end{align}

\section{Proof of Lemma \ref{lem:DFE_LAS}}\label{app:Derivation of lem:DFE_LAS}

\begin{proof}
Local asymptotic stability is equivalent to all eigenvalues of the
Jacobian evaluated at the equilibrium having strictly negative real
part \cite{perko2013differential,murray2002mathematical}.
The Jacobian of System~\eqref{eqn:MainModel} at $x_{ab}^{0}$ is the
$(12\times12)$ matrix
\begin{equation}
  J_{0}=
  \setcounter{MaxMatrixCols}{12}
  \begin{pmatrix}
    -\mu     & -\beta_c & -\beta_I & \iota_b  & -\beta   & -j_{12}  & -j_{13}       & -\beta   & \iota    & -\beta_c & -\beta_I & 0        \\
    0        & \beta_c-j_1 & \beta_I & 0       & 0        & \beta_c  & \beta_I       & 0        & 0        & \beta_c+\iota & \beta_I & 0   \\
    0        & \kappa_S & -j_2     & 0        & 0        & 0        & 0             & 0        & 0        & 0        & \iota    & 0        \\
    0        & \theta   & \gamma_b & -j_3     & 0        & 0        & 0             & 0        & 0        & 0        & 0        & \iota    \\
    0        & 0        & 0        & 0        & \beta-j_4 & \beta   & \beta         & \beta+\iota_b & 0   & 0        & 0        & 0        \\
    0        & 0        & 0        & 0        & 0        & -j_5     & 0             & 0        & 0        & 0        & 0        & 0        \\
    0        & 0        & 0        & 0        & 0        & \kappa_I & -j_6          & 0        & 0        & 0        & 0        & 0        \\
    0        & 0        & 0        & 0        & 0        & \theta   & \gamma_b      & -j_7     & 0        & 0        & 0        & 0        \\
    0        & 0        & 0        & 0        & \gamma   & 0        & 0             & 0        & -j_8     & 0        & 0        & \iota_b  \\
    0        & 0        & 0        & 0        & 0        & \gamma   & 0             & 0        & 0        & -j_9     & 0        & 0        \\
    0        & 0        & 0        & 0        & 0        & 0        & \gamma        & 0        & 0        & \kappa_R & -j_{10}  & 0        \\
    0        & 0        & 0        & 0        & 0        & 0        & 0             & \gamma   & 0        & \theta   & \gamma_b & -j_{11}
  \end{pmatrix},
  \label{eqn:Jacobian_Matrix}
\end{equation}
with $j_{12}=\beta+\beta_c$ and $j_{13}=\beta+\beta_I$.

\medskip
\noindent\textbf{Step~1: Ten eigenvalues by cofactor expansion.}

Exploiting the sparse structure of $J_0$, cofactor expansion along columns with a single non-zero entry yields ten eigenvalues directly:
\begin{equation}
  \lambda_1=-\mu,\quad
  \lambda_2=-j_3,\quad
  \lambda_3=-j_8,\quad
  \lambda_4=\beta-j_4,\quad
  \lambda_5=-j_{11},\quad
  \lambda_6=-j_7,
  \label{eqn:eigs_1to6}
\end{equation}
\begin{equation}
  \lambda_7=-j_{10},\quad
  \lambda_8=-j_9,\quad
  \lambda_9=-j_6,\quad
  \lambda_{10}=-j_5.
  \label{eqn:eigs_7to10}
\end{equation}

\noindent\textit{Eigenvalues $\lambda_1$--$\lambda_3$, $\lambda_5$--$\lambda_{10}$.} Each equals the negative of a sum of non-negative parameters containing $\mu>0$, hence is strictly negative:
\begin{alignat*}{3}
  \lambda_1 &= -\mu < 0, &\qquad
  \lambda_2 &= -(\iota_b+\mu)<0, &\qquad
  \lambda_3 &= -(\iota+\mu)<0,\\
  \lambda_5 &= -(\iota+\iota_b+\mu)<0, &\qquad
  \lambda_6 &= -(\gamma+\iota_b+\mu)<0, &\qquad
  \lambda_7 &= -(\gamma_b+\iota+\mu)<0,\\
  \lambda_8 &= -(\kappa_R+\theta+\iota+\mu)<0, &\qquad
  \lambda_9 &= -(\gamma+\gamma_b+\mu)<0, &\qquad
  \lambda_{10} &= -(\kappa_I+\gamma+\theta+\mu)<0.
\end{alignat*}

\noindent\textit{Eigenvalue $\lambda_4$.} Recalling $j_4=\gamma+\mu$,
\begin{equation}
  \lambda_4
  = \beta - j_4
  = j_4\!\left(\frac{\beta}{j_4}-1\right)
  = j_4\!\left(\mathcal{R}_{0a}^{V1}-1\right).
  \label{eqn:lambda4}
\end{equation}
Since $j_4>0$, we have $\lambda_4<0$ if and only if $\mathcal{R}_{0a}^{V1}<1$.

\medskip
\noindent\textbf{Step~2: The remaining $2\times2$ block.}

After extracting $\lambda_1,\ldots,\lambda_{10}$, the residual characteristic equation factors to
\begin{equation}
  \mathcal{J}_{2}
  = \begin{pmatrix}   
      \beta_c-j_1 & \beta_I \\
      \kappa_S    & -j_2
    \end{pmatrix},
  \label{eqn:J2x2}
\end{equation}
with eigenvalues $\lambda_{11}$ and $\lambda_{12}$. By the Routh--Hurwitz criterion \cite{murray2002mathematical}, both roots have strictly negative real part if and only if $\operatorname{tr}(\mathcal{J}_{2})<0$ and  $\det(\mathcal{J}_{2})>0$.

\noindent\textit{Trace.}
\[
  \operatorname{tr}(\mathcal{J}_{2})
  = \beta_c - j_1 - j_2.
\]
If $\mathcal{R}_{0b}^{T1}<1$, then $\beta_c j_2+\beta_I\kappa_S<j_1j_2$,
which implies $\beta_c<j_1$, so
$\operatorname{tr}(\mathcal{J}_{2})=\beta_c-j_1-j_2<-j_2<0$.

\noindent\textit{Determinant.}
\begin{equation}
  \det(\mathcal{J}_{2})
  = j_1 j_2 - \beta_c j_2 - \beta_I\kappa_S
  = j_1 j_2\!\left(1-\mathcal{R}_{0b}^{T1}\right).
  \label{eqn:det_J2x2}
\end{equation}
Since $j_1,j_2>0$, we have $\det(\mathcal{J}_{2})>0$ if and only if
$\mathcal{R}_{0b}^{T1}<1$.

\medskip
\noindent\textbf{Step~3: Conclusion.}

If $\mathcal{R}_{0a}^{V1}<1$ and $\mathcal{R}_{0b}^{T1}<1$, then all twelve eigenvalues satisfy $\operatorname{Re}(\lambda_i)<0$, so $x_{ab}^{0}$ is LAS by linearization theory \cite{perko2013differential}.

For the converse: $\mathcal{R}_{0a}^{V1}>1$ gives $\lambda_4>0$ by~\eqref{eqn:lambda4}, while $\mathcal{R}_{0b}^{T1}>1$ gives $\det(\mathcal{J}_{2})<0$ by~\eqref{eqn:det_J2x2}, implying one positive and one negative eigenvalue in the $2\times2$ block. In either case $x_{ab}^{0}$ is unstable.
\end{proof}

\section{Analysis of Quasi-Bacteria-Free Equilibrium} \label{app:QBFE}

\subsection{Proof of Lemma \ref{lem:QBFE_existence}}
\label{app:proof_QBFE_existence}
\begin{proof}
Setting the right-hand sides of~\eqref{eqn:ReducedModel_NoBacterial} to zero gives steady-state system:

\begin{equation}
  \begin{aligned}
    \mu - \beta\, x_{is}\, x_{ss} + \iota\, x_{rs} - \mu\, x_{ss} & = 0,\\
    \beta\, x_{is}\, x_{ss} - j_4\, x_{is}& = 0,\\
    \gamma\, x_{is} - j_8\, x_{rs} &= 0.
    \end{aligned}
    \label{eqn:QBFE_ss1-ss3}
\end{equation}
\textbf{Case 1} ($x_{is}=0$): equations give $x_{rs}=0$, $x_{ss}=1$,
recovering the DFE only.\\
\textbf{Case 2} ($x_{is}>0$): dividing the second equation of ~\eqref{eqn:QBFE_ss1-ss3} by $x_{is}$ gives $x_{ss}^{bf}=j_4/\beta=1/\mathcal{R}_{0a}^{V1}$, which lies in $(0,1)$ if and only if $\mathcal{R}_{0a}^{V1}>1$. The third equation in ~\eqref{eqn:QBFE_ss1-ss3} gives $x_{rs}^{bf}=(\gamma/j_8)x_{is}^{bf}$. Substituting into the normalization constraint $x_{ss}+x_{is}+x_{rs}=1$ and solving yields $x_{is}^{bf}=[j_8/(j_4+\iota)](1-1/\mathcal{R}_{0a}^{V1})>0$ if and only if $\mathcal{R}_{0a}^{V1}>1$.
\end{proof}

\subsection{Proof of Lemma \ref{lem:QBFE_stability}}
\label{app:proof_QBFE_stability}
\begin{proof}
The characteristic polynomial of~\eqref{eqn:J_bf_1} is $P_1^{bf}(\lambda)=\lambda^3+a_1\lambda^2+a_2\lambda+a_3$ with
\begin{align*}
  a_1 &= \frac{\mathcal{R}_{0a}^{V1}j_8}{j_4+\iota}
          \!\left(1-\frac{1}{\mathcal{R}_{0a}^{V1}}\right)+j_8+\mu, \\
  a_2 &= \frac{\beta j_8}{j_4+\iota}
          \!\left(1-\frac{1}{\mathcal{R}_{0a}^{V1}}\right)
          \!\left(\frac{1}{j_8}+\frac{1}{j_4}\right)+\frac{\mu}{j_4}, \\
  a_3 &= j_8\beta\mu\!\left(1-\frac{1}{\mathcal{R}_{0a}^{V1}}\right).
\end{align*}
All three are positive when $\mathcal{R}_{0a}^{V1}>1$.  Expanding $a_1a_2$ confirms $a_1a_2>a_3$ under the same condition. The Routh--Hurwitz criterion then gives local asymptotic stability.
\end{proof}

\subsection{Proof of Theorem \ref{thm:ReducedModel_NoBacterial_4ward}}
\label{app:proof_bifurcation}
\begin{proof}
Set $\beta^*=j_4$ so that $\mathcal{R}_{0a}^{V1}=1$. Rename $(x_{ss},x_{is},x_{rs})\mapsto(x_1,x_2,x_3)$ and define $y(x,\beta)=dx/dt\in\mathcal{C}^2(\mathbb{R}^3\times\mathbb{R})$. The Jacobian at the DFE when $\beta=\beta^*$ is
\[
  D^{bf}_{\beta^*}=
  \begin{pmatrix}-\mu&\iota&-\beta^*\\0&-j_8&\gamma\\0&0&0\end{pmatrix},
\]
which has a simple zero eigenvalue.  The right and left eigenvectors corresponding to zero, normalized so that $v\cdot w=1$, are
\[
  w=(-(\gamma+\iota+\mu),\,j_8,\,\gamma)^{\top},\qquad
  v=(0,1,0).
\]
Computing the second-order directional derivatives (Theorem~4.1 of \cite{castillo2004dynamical}) gives $a=-j_8\beta^*/(j_4+\iota)<0$ and $b=1>0$. Since $a<0$ and $b>0$, the bifurcation is supercritical (forward).
\end{proof}

	\subsection{Co-infected matrices}\label{ssst:Coinfected matrices}
	
	\begin{align*}
		F_1 =\left(\begin{array}{rrrrrr}
			{x_{ss}^{bf}} {\beta_{c}} & {x_{ss}^{bf}} {\beta_{c}} & {x_{ss}^{bf}} {\beta_{c}} & {x_{ss}^{bf}} {\beta_{I}} & {x_{ss}^{bf}} {\beta_{I}} & {x_{ss}^{bf}} {\beta_{I}} \\
			{x_{is}^{bf}} {\beta_{c}} & {x_{is}^{bf}} {\beta_{c}} & {x_{is}^{bf}} {\beta_{c}} & {x_{is}^{bf}} {\beta_{I}} & {x_{is}^{bf}} {\beta_{I}} & {x_{is}^{bf}} {\beta_{I}} \\
			{x_{rs}^{bf}} {\beta_{c}} & {x_{rs}^{bf}} {\beta_{c}} & {x_{rs}^{bf}} {\beta_{c}} & {x_{rs}^{bf}} {\beta_{I}} & {x_{rs}^{bf}} {\beta_{I}} & {x_{rs}^{bf}} {\beta_{I}} \\
			0 & 0 & 0 & 0 & 0 & 0 \\
			0 & 0 & 0 & 0 & 0 & 0 \\
			0 & 0 & 0 & 0 & 0 & 0
		\end{array}\right), 	F2=\left(\begin{array}{rrrr}
			{x_{ss}^{vf}} {\beta} & {x_{ss}^{vf}} {\beta} & {x_{ss}^{vf}} {\beta} & {x_{ss}^{vf}} {\beta} \\
			{x_{sc}^{vf}} {\beta} & {x_{sc}^{vf}} {\beta} & {x_{sc}^{vf}} {\beta} & {x_{sc}^{vf}} {\beta} \\
			{x_{si}^{vf}} {\beta} & {x_{si}^{vf}} {\beta} & {x_{si}^{vf}} {\beta} & {x_{si}^{vf}} {\beta} \\
			{x_{sr}^{vf}} {\beta} & {x_{sr}^{vf}} {\beta} & {x_{sr}^{vf}} {\beta} & {x_{sr}^{vf}} {\beta}
		\end{array}\right),
	\end{align*}
	and
	\begin{align*}
		V_1= \left(\begin{array}{rrrrrr}
			j_{1} & 0 & -{\iota} & 0 & 0 & 0 \\
			0 & j_{5} & 0 & 0 & 0 & 0 \\
			0 & -{\gamma} & j_{9} & 0 & 0 & 0 \\
			-{\kappa_{S}} & 0 & 0 & j_{2} & 0 & -{\iota} \\
			0 & -{\kappa_{I}} & 0 & 0 & j_{6} & 0 \\
			0 & 0 & -{\kappa_{R}} & 0 & -{\gamma} & j_{10}
		\end{array}\right), 
		V2= \left(\begin{array}{rrrr}
			j_{4} & 0 & 0 & -{\iota_{b}} \\
			0 & j_{5} & 0 & 0 \\
			0 & -{\kappa_{I}} & j_{6} & 0 \\
			0 & -{\theta} & -{\gamma_{b}} & j_{7}
		\end{array}\right)
	\end{align*}
    
\subsection{Proof of Lemma \ref{lem:J3_stability}}
\label{app:proof_J3_stability}
\begin{proof}
With $x_{is}^{bf}$ from~\eqref{eqn:Bacteria-free fixed} and $\mathcal{R}_{0a}^{V1}>1$, all of $c_1$, $c_2$, $c_3$ are strictly positive.  Direct expansion gives
\[
  c_1c_2-c_3
  = (x_{is}^{bf}\beta+j_{11}+j_7)(c_2+j_3^2) > 0,
\]
so the Routh--Hurwitz criterion is satisfied.
\end{proof}

\subsection{Phase-Space Dynamics of the Bacteria-Free Subsystem}\label{app:QBFE_phase_space}
\begin{figure}[pos=ht] 
  \centering
\includegraphics[width=.85\linewidth]{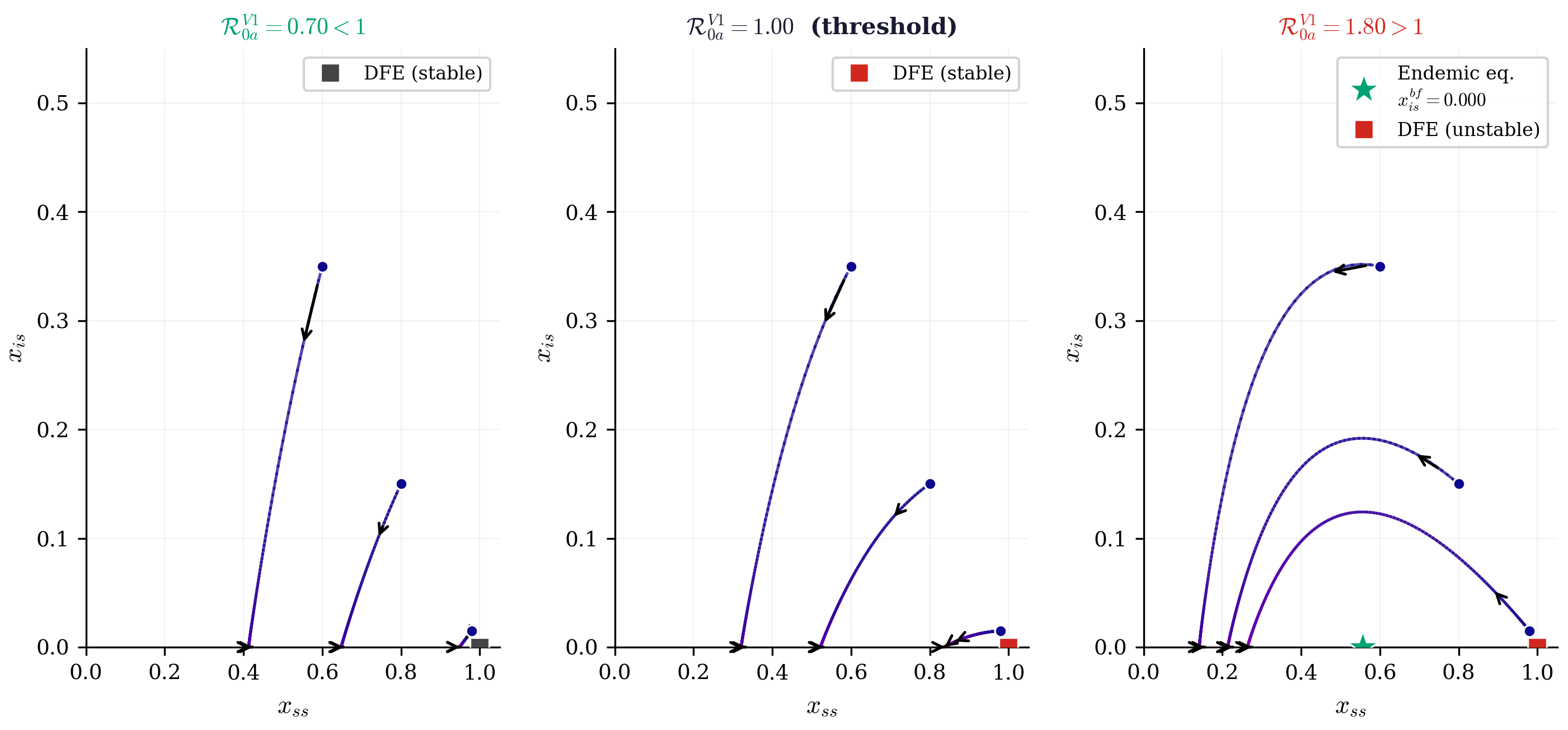} 
 \caption{Phase-plane trajectories (\(x_{ss}\) vs. \(x_{is}\)) across subcritical (\(\mathcal{R}_{0a}^{V1} = 0.70\)), critical (\(1.00\)), and epidemic (\(1.80\)) regimes evaluated at baseline values (\(\gamma = 0.33\), \(\mu = 0.0001\), \(\iota = 0.00\)). Red squares and green stars denote the disease-free and stable endemic equilibria respectively, tracking global path convergence toward the active attractor point (\(x_{ss}^{bf} \approx 0.556\)).}
\label{fig:QBFE_Phase_Plane}
\end{figure}

Figure \ref{fig:QBFE_Phase_Plane} visualizes the 2D phase-space trajectories (\(x_{ss}\) vs. \(x_{is}\)) of the bacteria-free subsystem across three distinct epidemic transmission regimes. In the subcritical and critical states (\(\mathcal{R}_{0a}^{V1} = 0.70\) and 1.00), all trajectory paths slope downward to the coordinate floor and terminate at the stable disease-free equilibrium at (1.0, 0.0), highlighting that initial infections will naturally die out. Once transmission crosses the critical threshold (\(\mathcal{R}_{0a}^{V1} = 1.80\)), the disease-free point at (1.0, 0.0) loses its stability (red square), and a stable endemic equilibrium point emerges (green star). In this epidemic state, trajectory paths starting from separate initial conditions loop upward to track historical infection waves before settling at the endemic target point on the horizontal axis (\(x_{ss} \approx 0.556\)), demonstrating that the virus successfully maintains a persistent, low-level circulation within the host population.

\subsection{Time-Series Simulations and Compartmental Convergence} \label{app:QBFE_time_series}

\begin{figure}[pos=ht] 
  \centering
\includegraphics[width=.85\linewidth]{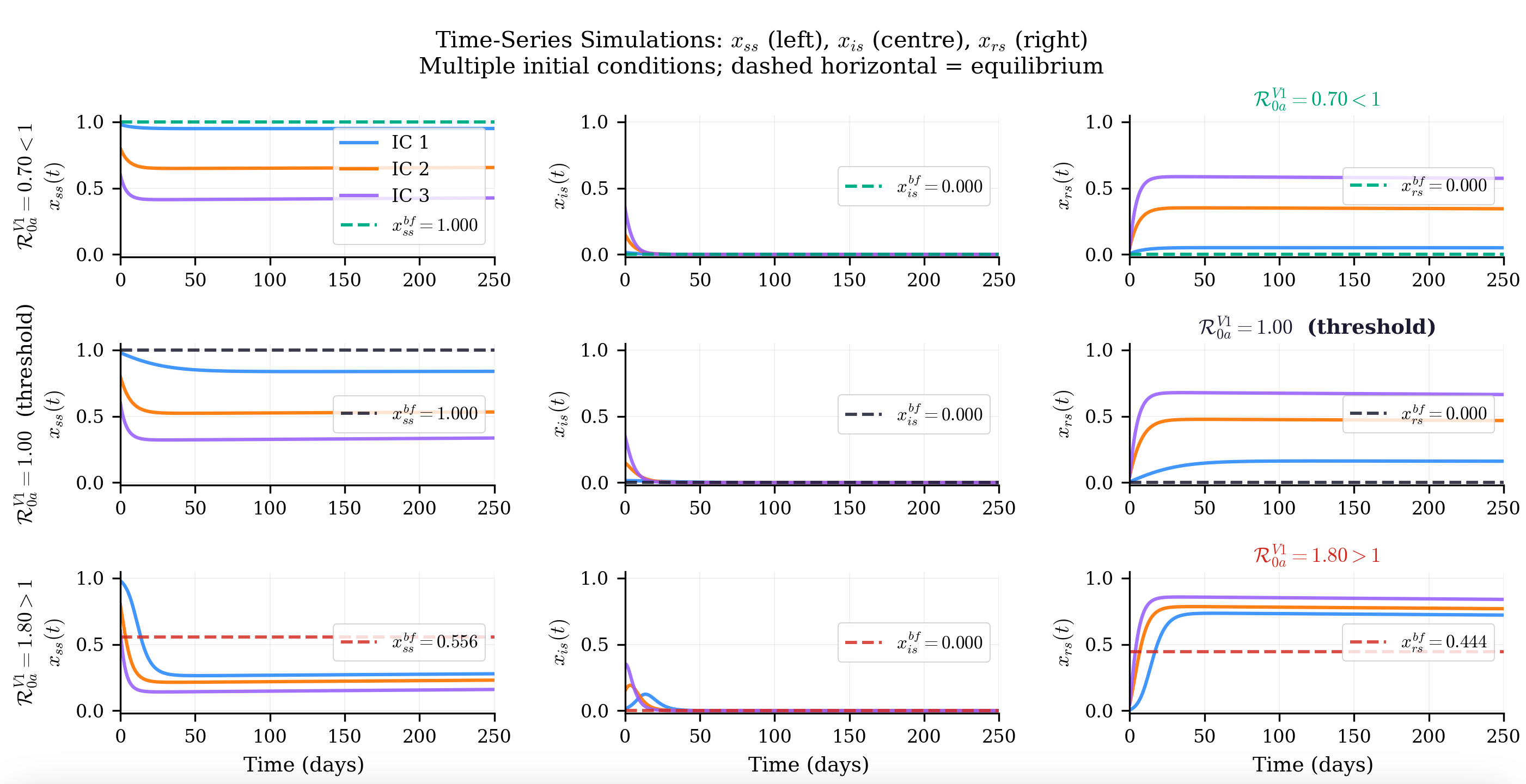}
\caption{250-day time series of $x_{ss}$, $x_{is}$, and $x_{rs}$ from multiple initial conditions, at $\mathcal{R}_{0a}^{V1}=1.80$. Dashed lines mark the analytical equilibrium (Eq.~\eqref{eqn:Bacteria-free fixed}); trajectories converge while conserving $x_{ss}+x_{is}+x_{rs}=1$.}
\label{fig:QBFE_Time_series}
\end{figure}

Figure \ref{fig:QBFE_Time_series} illustrates the explicit time-series trajectories of the host population compartments—susceptible (\(x_{ss}(t)\), left), infected (\(x_{is}(t)\), center), and recovered (\(x_{rs}(t)\), right)—across three transmission regimes over a 250-day period. In the subcritical and critical regimes (\(\mathcal{R}_{0a}^{V1} = 0.70\) and \(1.00\)), the infected fraction (\(x_{is}\)) in the center panel rapidly decays to zero from all initial conditions, while the susceptible and recovered groups stabilize as parallel straight lines entirely dependent on their starting ratios. This behavior confirms that below the epidemic threshold, no secondary transmission occurs. Conversely, in the epidemic regime (\(\mathcal{R}_{0a}^{V1} = 1.80\)), the system exhibits a dynamic epidemiological wave; the infected curves peak early before dropping, forcing all trajectories to break away from their starting conditions and converge onto the dashed equilibrium lines. Ultimately, the susceptible population drops to its steady-state target (\(x_{ss}^{bf} = 0.556\)) and the recovered pool climbs to its endemic baseline (\(x_{rs}^{bf} = 0.444\)), visually confirming the global stability of the endemic equilibrium state.

\section{Analysis of Quasi-Virus-Free-Equilibrium}
\subsection{Proof of Lemma \ref{lem:Unique_QVFE}}
\label{app:proof_QVFE_existence}
\begin{proof}
Set right-hand sides of~\eqref{eqn:ReducedModel_NoVirus} to zero. Case $x_{sc}=0$: gives $x_{si}=x_{sr}=0$, $x_{ss}=1$ (DFE only). Case $x_{sc}>0$: from the $x_{sc}$ equation, $x_{ss}^{vf}=j_1/(\beta_c+\beta_I\kappa_S/j_2)=1/\mathcal{R}_{0b}^{T1}$, which lies in $(0,1)$ iff $\mathcal{R}_{0b}^{T1}>1$. The remaining components follow from the $x_{si}$ and $x_{sr}$ equations together with the normalization constraint.
\end{proof}

\subsection{Proof of Theorem \ref{thm:ReducedModel_NoVirus_Backward}}
\label{app:proof_QVFE_bifurcation}
\begin{proof}
Apply Theorem~4.1 of \cite{castillo2004dynamical} with bifurcation parameter $\beta_c^*=j_1$ (value at $\mathcal{R}_{0b}^{T1}=1$). The Jacobian at the DFE when $\beta_c=\beta_c^*$ has a simple zero eigenvalue.  Computing right and left eigenvectors and the second-order directional derivatives yields formula~\eqref{eqn:QVFE_ab}. Since the numerator of $a$ is a sum of products of positive parameters, it is strictly positive.  The sign of $a$ is therefore determined entirely by the sign of $\beta_c-(\kappa_S+\mu+\theta)$.
\end{proof}

\subsection{Stability of $\mathcal{J}^{vf}_{3}$}
\label{app:proof_QVFE_J3_stability}
\begin{proof}
$\mathcal{J}^{vf}_{3}$ is lower triangular with diagonal entries
$-(x_{si}^{vf}\beta_I+x_{sc}^{vf}\beta_c+j_8)$, $-j_9$, $-j_{10}$, $-j_{11}$, all strictly negative when $\mathcal{R}_{0b}^{T1}>1$ (so $x_{sc}^{vf},x_{si}^{vf}>0$) and all parameters positive. Hence all eigenvalues are strictly negative.
\end{proof}

\section{Components of the Coexistence Endemic Equilibrium}
\label{app:EE}

Under the assumption $\iota=\iota_b=0$, setting the right-hand sides of System~\eqref{eqn:MainModel} to zero and solving in terms of the endemic forces of infection~\eqref{eqn:force_of_infections_Endemic} gives
\begin{align}
	x_{ss}^* &= \frac{\Lambda}{\xi_V^*+\xi_B^*+\mu},
	\label{eqn:EE_SS}\\[4pt]
	x_{sc}^* &= \frac{\Lambda\,\xi_B^*}
	{(\xi_V^*+\xi_B^*+\mu)(\xi_V^*+j_1)},
	\label{eqn:EE_SC}\\[4pt]
	x_{si}^* &= \frac{\Lambda\,\xi_B^*\,\kappa_S}
	{(\xi_V^*+\xi_B^*+\mu)(\xi_V^*+j_1)(\xi_V^*+j_2)},
	\label{eqn:EE_SI}\\[4pt]
	x_{sr}^* &= \frac{\Lambda\,\xi_B^*}
	{(\xi_V^*+\xi_B^*+\mu)(\xi_V^*+j_1)(\xi_V^*+j_3)}
	\!\left(\frac{\gamma_b\kappa_S}{\xi_V^*+j_2}+\theta\right),
	\label{eqn:EE_SR}\\[4pt]
	X_{IS}^* &= \frac{\Lambda\,\xi_V^*}
	{(\xi_V^*+\xi_B^*+\mu)(\xi_B^*+j_4)},
	\label{eqn:EE_IS}\\[4pt]
	x_{ic}^* &= \frac{\Lambda\,\xi_V^*\,\xi_B^*}
	{(\xi_V^*+\xi_B^*+\mu)\,j_5}
	\!\left(\frac{1}{\xi_V^*+j_1}+\frac{1}{\xi_B^*+j_4}\right),
	\label{eqn:EE_IC}\\[4pt]
	X_{II}^* &= \frac{\Lambda\,\xi_V^*\,\xi_B^*}
	{(\xi_V^*+\xi_B^*+\mu)\,j_6}
	\!\left(\frac{\kappa_I}{j_5}
	\!\left(\frac{1}{\xi_V^*+j_1}+\frac{1}{\xi_B^*+j_4}\right)
	+\frac{\kappa_S}{(\xi_V^*+j_1)(\xi_V^*+j_2)}\right),
	\label{eqn:EE_II}\\[4pt]
	x_{ir}^* &= \frac{x_{sr}^*\,\xi_V^*+\theta\,x_{ic}^*+\gamma_b\,X_{II}^*}
	{j_7},
	\label{eqn:EE_IR}\\[4pt]
	x_{rs}^* &= \frac{\Lambda\,\xi_V^*\,\gamma}
	{(\xi_V^*+\xi_B^*+\mu)(\xi_B^*+j_4)(\xi_B^*+j_8)},
	\label{eqn:EE_RS}\\[4pt]
	x_{rc}^* &= \frac{\Lambda\,\xi_V^*\,\xi_B^*\,\gamma}
	{(\xi_V^*+\xi_B^*+\mu)\,j_9}
	\!\left(\frac{1}{j_5}
	\!\left(\frac{1}{\xi_V^*+j_1}+\frac{1}{\xi_B^*+j_4}\right)
	+\frac{1}{(\xi_B^*+j_4)(\xi_B^*+j_8)}\right),
	\label{eqn:EE_RC}\\[4pt]
	x_{ri}^* &= \frac{\Lambda\,\xi_V^*\,\xi_B^*\,\gamma}
	{(\xi_V^*+\xi_B^*+\mu)\,j_{10}}\,\mathbb{P},
	\label{eqn:EE_RI}
\end{align}
where
\begin{equation}
	\mathbb{P} =
	\frac{\kappa_R}{j_9}
	\!\left(\frac{1}{j_5}
	\!\left(\frac{1}{\xi_V^*+j_1}+\frac{1}{\xi_B^*+j_4}\right)
	+\frac{1}{(\xi_B^*+j_4)(\xi_B^*+j_8)}\right)
	+\frac{1}{j_6}
	\!\left(\frac{\kappa_I}{j_5}
	\!\left(\frac{1}{\xi_V^*+j_1}+\frac{1}{\xi_B^*+j_4}\right)
	+\frac{\kappa_S}{(\xi_V^*+j_1)(\xi_V^*+j_2)}\right),
	\label{eqn:EE_P}
\end{equation}
and finally
\begin{equation}
	x_{rr}^* = \frac{\gamma\,x_{ir}^*+\theta\,x_{rc}^*+\gamma_b\,x_{ri}^*}
	{j_{11}}.
	\label{eqn:EE_RR}
\end{equation}
The endemic forces of infection are
\begin{equation}
	\xi_V^* = \frac{\beta}{N^*}(X_{IS}^*+x_{ic}^*+X_{II}^*+x_{ir}^*),
	\qquad
	\xi_B^* = \frac{\beta_c}{N^*}(x_{sc}^*+x_{ic}^*+x_{rc}^*)
	+\frac{\beta_I}{N^*}(x_{si}^*+X_{II}^*+x_{ri}^*),
	\label{eqn:force_of_infections_Endemic}
\end{equation}
where $N^*=\sum_{a,b}X_{ab}^*$. These constitute a nonlinear system in $(\xi_V^*,\xi_B^*)$ whose positive solutions determine the CEE.

\section{Eigenvectors of $\mathcal{J}_{\mathcal{T}}$ at the DFE}
\label{app:proof_CEE_eigenvectors}
\begin{proof}
The right eigenvector $\mathbf{w}$ is found by solving $\mathcal{J}_{\mathcal{T}}\mathbf{w}=\mathbf{0}$. Rows 6, 7, 8 of $\mathcal{J}_{\mathcal{T}}$ give $w_6=w_7=w_8=0$ immediately (since the corresponding diagonal blocks are $-j_5,-j_6,-j_7$ with no other non-zero entries in those columns at $\beta=\beta^*$). Similarly rows 10, 11, 12 give $w_{10}=w_{11}=w_{12}=0$. Row 5 (equation for $x_5$) with $w_6=w_7=w_8=0$ gives $\beta^* w_6 + \beta^* w_7 + (\beta^*+\iota_b)w_8 = j_4 w_5$, ... [continue to derive all components in order] The left eigenvector $\mathbf{v}$ is found analogously from $\mathbf{v}\mathcal{J}_{\mathcal{T}}=\mathbf{0}$.
\end{proof}

\section{Second Derivatives for the Bifurcation Coefficient $a$}
\label{app:Bif_Coeff}

The non-zero second partial derivatives of System~\eqref{eqn:MainModel_NewNotation} evaluated at the DFE $x_{ab}^{0}$ are listed below, grouped by the equation $y_k$ to which they belong. All unlisted second derivatives are zero at $x_{ab}^{0}$.

\paragraph{From $y_2$ (bacterial carrier equation).} Differentiating $y_2 = -x_2\xi_V + x_1\xi_B + \iota x_{10} - j_1 x_2$ with respect to pairs involving $x_1$ (source of bacterial infection) and $x_5,x_6,x_7,x_8$ (viral infectives via $\xi_V$):
\begin{alignat*}{2}
	\frac{\partial^2 y_2}{\partial x_1\partial x_2}(x_{ab}^{0})
	&= \frac{\partial^2 y_2}{\partial x_1\partial x_6}(x_{ab}^{0})
	= \frac{\partial^2 y_2}{\partial x_1\partial x_{10}}(x_{ab}^{0})
	&&= \beta_c,\\
	\frac{\partial^2 y_2}{\partial x_1\partial x_3}(x_{ab}^{0})
	&= \frac{\partial^2 y_2}{\partial x_1\partial x_7}(x_{ab}^{0})
	= \frac{\partial^2 y_2}{\partial x_1\partial x_{11}}(x_{ab}^{0})
	&&= \beta_I,\\
	\frac{\partial^2 y_2}{\partial x_2\partial x_5}(x_{ab}^{0})
	&= \frac{\partial^2 y_2}{\partial x_2\partial x_6}(x_{ab}^{0})
	= \frac{\partial^2 y_2}{\partial x_2\partial x_7}(x_{ab}^{0})
	= \frac{\partial^2 y_2}{\partial x_2\partial x_8}(x_{ab}^{0})
	&&= -\beta^*.
\end{alignat*} 

\paragraph{From $y_3$ (bacterial infectious equation).}
Differentiating $y_3 = -x_3\xi_V + \kappa_S x_2 + \iota x_{11} - j_2 x_3$:
\begin{equation*}
	\frac{\partial^2 y_3}{\partial x_3\partial x_5}(x_{ab}^{0})
	= \frac{\partial^2 y_3}{\partial x_3\partial x_6}(x_{ab}^{0})
	= \frac{\partial^2 y_3}{\partial x_3\partial x_7}(x_{ab}^{0})
	= \frac{\partial^2 y_3}{\partial x_3\partial x_8}(x_{ab}^{0})
	= -\beta^*.
\end{equation*}

\paragraph{From $y_5$ (viral infectious equation).}
Differentiating $y_5 = x_1\xi_V - x_5\xi_B + \iota_b x_8 - j_4 x_5$:
\begin{alignat*}{2}
	\frac{\partial^2 y_5}{\partial x_1\partial x_5}(x_{ab}^{0})
	&= \frac{\partial^2 y_5}{\partial x_1\partial x_6}(x_{ab}^{0})
	= \frac{\partial^2 y_5}{\partial x_1\partial x_7}(x_{ab}^{0})
	= \frac{\partial^2 y_5}{\partial x_1\partial x_8}(x_{ab}^{0})
	&&= \beta^*,\\
	\frac{\partial^2 y_5}{\partial x_2\partial x_5}(x_{ab}^{0})
	&= \frac{\partial^2 y_5}{\partial x_5\partial x_6}(x_{ab}^{0})
	= \frac{\partial^2 y_5}{\partial x_5\partial x_{10}}(x_{ab}^{0})
	&&= -\beta_c,\\
	\frac{\partial^2 y_5}{\partial x_3\partial x_5}(x_{ab}^{0})
	&= \frac{\partial^2 y_5}{\partial x_5\partial x_7}(x_{ab}^{0})
	= \frac{\partial^2 y_5}{\partial x_5\partial x_{11}}(x_{ab}^{0})
	&&= -\beta_I.
\end{alignat*}

\paragraph{From $y_6$ (co-infected carrier equation).}
Differentiating $y_6 = x_2\xi_V + x_5\xi_B - j_5 x_6$:
\begin{alignat*}{2}
	\frac{\partial^2 y_6}{\partial x_2\partial x_5}(x_{ab}^{0})
	&&&= \beta^*+\beta_c,\\
	\frac{\partial^2 y_6}{\partial x_2\partial x_6}(x_{ab}^{0})
	&= \frac{\partial^2 y_6}{\partial x_2\partial x_7}(x_{ab}^{0})
	= \frac{\partial^2 y_6}{\partial x_2\partial x_8}(x_{ab}^{0})
	&&= \beta^*,\\
	\frac{\partial^2 y_6}{\partial x_5\partial x_6}(x_{ab}^{0})
	&= \frac{\partial^2 y_6}{\partial x_5\partial x_{10}}(x_{ab}^{0})
	&&= \beta_c,\\
	\frac{\partial^2 y_6}{\partial x_3\partial x_5}(x_{ab}^{0})
	&= \frac{\partial^2 y_6}{\partial x_5\partial x_7}(x_{ab}^{0})
	= \frac{\partial^2 y_6}{\partial x_5\partial x_{11}}(x_{ab}^{0})
	&&= \beta_I.
\end{alignat*}

\paragraph{From $y_7$ (co-infected active equation).}
Differentiating $y_7 = x_3\xi_V + \kappa_I x_6 - j_6 x_7$:
\begin{equation*}
	\frac{\partial^2 y_7}{\partial x_3\partial x_5}(x_{ab}^{0})
	= \frac{\partial^2 y_7}{\partial x_3\partial x_6}(x_{ab}^{0})
	= \frac{\partial^2 y_7}{\partial x_3\partial x_7}(x_{ab}^{0})
	= \frac{\partial^2 y_7}{\partial x_3\partial x_8}(x_{ab}^{0})
	= \beta^*.
\end{equation*}

\paragraph{From $y_8$ (co-infected recovered equation).}
Differentiating $y_8 = x_4\xi_V + \gamma_b x_7 + \theta x_6 - j_7 x_8$:
\begin{equation*}
	\frac{\partial^2 y_8}{\partial x_4\partial x_5}(x_{ab}^{0})
	= \frac{\partial^2 y_8}{\partial x_4\partial x_6}(x_{ab}^{0})
	= \frac{\partial^2 y_8}{\partial x_4\partial x_7}(x_{ab}^{0})
	= \frac{\partial^2 y_8}{\partial x_4\partial x_8}(x_{ab}^{0})
	= \beta^*.
\end{equation*}

\paragraph{Derivative with respect to the bifurcation parameter.}
\begin{equation*}
	\frac{\partial^2 y_5}{\partial x_5\,\partial\beta^*}(x_{ab}^{0})
	= \frac{\partial^2 y_5}{\partial x_6\,\partial\beta^*}(x_{ab}^{0})
	= \frac{\partial^2 y_5}{\partial x_7\,\partial\beta^*}(x_{ab}^{0})
	= \frac{\partial^2 y_5}{\partial x_8\,\partial\beta^*}(x_{ab}^{0})
	= 1.
\end{equation*}

\section{Sensitivity Analysis Pipeline}\label{app:LHS_PRCC}

We used Latin Hypercube Sampling (LHS) paired with Partial Rank Correlation Coefficients (PRCC), following the standard approach of \cite{blower1994sensitivity,marino2008methodology}. The $K=12$ parameters
\[
  \mathbf{P} = \{\beta, \gamma, \beta_c, \gamma_b, \theta, \kappa_S, \iota, \iota_b, \mu, \beta_I, \kappa_I, \kappa_R\}
\]
were each partitioned into $N=5{,}000$ equiprobable intervals, sampled once per interval, and randomly permuted across rows to form the LHS matrix $\mathbf{X}\in\mathbb{R}^{N\times K}$. The model was evaluated at each row to produce the seven output vectors listed in Section~\ref{sec:Sensitivity}. Because the QBFE and QVFE fractions are undefined below their respective reproduction thresholds, rows producing non-finite outputs were dropped, leaving $N_m\leq N$ valid samples.

Since the input--output relationships are nonlinear but monotone, we rank-transformed $\mathbf{X}_m$ and each output $\mathbf{Y}_m$ before computing partial correlations. For each parameter $X_j$, the PRCC $r_{X_j,Y\cdot\mathbf{Z}}$ was obtained from the precision matrix $B=C^{-1}$ of the combined rank-correlation matrix $C\in\mathbb{R}^{(K+1)\times(K+1)}$ built from $[\mathbf{X}_r,\mathbf{Y}_r]$:
\begin{equation}
  r_{X_j, Y\cdot\mathbf{Z}} = \frac{-B_{j,K+1}}{\sqrt{B_{j,j}\,B_{K+1,K+1}}}.
\end{equation}
Significance was assessed via $t = r_{X_j,Y\cdot\mathbf{Z}}\sqrt{\mathrm{df}/(1-r_{X_j,Y\cdot\mathbf{Z}}^2)}$ with $\mathrm{df}=N_m-K-1$, using a two-tailed Student's $t$-test; correlations with $p\geq0.05$ were treated as not statistically significant and masked in Figure~\ref{fig:prcc_heatmap}.

\bibliographystyle{model1-num-names}

\bibliography{references}

\end{document}